\tikzset{pullback/.style={minimum size=1.2ex,path picture={
\draw[opacity=1,black,-,#1] (-0.5ex,-0.5ex) -- (0.5ex,-0.5ex) -- (0.5ex,0.5ex);%
}}}
\theoremstyle{plain}
\newtheorem{theorem}{Theorem}[section]
\newtheorem{proposition}[theorem]{Proposition}
\newtheorem{lemma}[theorem]{Lemma}
\newtheorem{corollary}[theorem]{Corollary}
\newtheorem*{corollary*}{Corollary}
\theoremstyle{definition}
\newtheorem{example}[theorem]{Example}
\newtheorem{definition}[theorem]{Definition}
\newtheorem{remark}[theorem]{Remark}
\newtheorem{notation}[theorem]{Notation}
\title{Quotient toposes of discrete dynamical systems}
\author{Ryuya Hora, Yuhi Kamio}
\thanks{Graduate School of Mathematical Sciences, University of Tokyo. \url{hora@ms.u-tokyo}}
\thanks{College of Arts and Sciences, University of Tokyo. \url{emirp13@g.ecc.u-tokyo.ac.jp}}
\subjclass[2020]{18B25, 18F10, 20M50, 05C63}
\keywords{topos, quotient topos, endofunction, discrete dynamical system, coreflective subcategory, lax epimorphism, monoid}
\newcommand{\dq}[1]{``{#1}''}
\newcommand{\invmemo}[1]{}
\newcommand{\gen}[1]{\langle {#1}\rangle}
\newcommand{\N}{\mathbb{N}}
\newcommand{\Z}{\mathbb{Z}}
\newcommand{\Set}{\mathrm{Set}}
\newcommand{\Cat}{\mathrm{Cat}}
\newcommand{\Dw}[1]{\mathcal{D}({#1})}
\newcommand{\E}{\mathcal{E}}
\newcommand{\F}{\mathcal{F}}
\newcommand{\op}{\mathrm{op}}
\newcommand{\ob}[1]{\mathrm{ob}(#1)}
\newcommand{\ps}[1]{\Set^{{#1}^{\op}}}
\newcommand{\DD}{\ps{\N}}
\newcommand{\go}{\vartriangleright}
\newcommand{\goeq}{\trianglerighteq}
\newcommand{\og}{\vartriangleleft}
\newcommand{\ogeq}{\trianglelefteq}
\newcommand{\gog}{\simeq}
\newcommand{\G}{\mathbb{G}}
\newcommand{\Gc}{\mathbb{G}^{\mathrm{c}}}
\newcommand{\PQ}{\mathbf{PQ}}
\newcommand{\Q}{\mathbf{Q}}
\newcommand{\EQ}{\mathbf{EQ}}
\newcommand{\0}{\mathbf{0}}
\newcommand{\1}{\mathbf{1}}
\newcommand{\ph}{\varphi}
\newcommand{\C}{\mathcal{C}}
\newcommand{\D}{\mathcal{D}}
\newcommand{\Mr}[2]{\mathbb{\N}_{#1,#2}}
\newcommand{\T}[2]{\mathbb{T}_{#1,#2}}
\newcommand{\ml}{\ast}
\newcommand{\id}{\mathrm{id}}
\newcommand{\qi}{quasi-invertible}
\newcommand{\qa}{quasi-absorbing}
\newcommand{\Qi}{Quasi-invertible}
\newcommand{\Qa}{Quasi-absorbing}
\newcommand{\tn}{\otimes_{\N}}
\newcommand{\cN}{\overline{\N}}
\newcommand{\cpN}{\N^{\mathrm{div}}}
\newcommand{\NN}{\cN \times \cpN}
\newcommand{\rNN}{\N \times \cpN}
\newcommand{\Nsucc}{(\N, \mathrm{succ})}
\newcommand{\X}{\mathbb{X}}
\newcommand{\Y}{\mathbb{Y}}
\newcommand{\scvalue}{1.1}
\newcommand{\w}{w}
\newcommand{\cP}{\overline{\mathbb{P}}}
\DeclareMathOperator*{\colim}{colim}
\DeclareMathOperator{\Lcm}{lcm}
\DeclareMathOperator{\Gcd}{gcd}
\newcommand{\Image}[1]{\mathrm{Im}(#1)}
\newcommand{\Ideal}{\mathrm{Ideal}}
\newcommand{\ADJ}[4]
    {
    \begin{tikzcd}[ampersand replacement = \&, column sep = small]
        {#1}
        \ar[rr, shift right=1.3ex, "{#2}"']
        \&\perp\&
        {#3}
        \ar[ll, shift right=1.3ex,"{#4}"']
    \end{tikzcd}
    }
\newcommand{\arst}{-{Stealth[length=2mm]}}
\DeclarePairedDelimiter{\abs}{\lvert}{\rvert}
\begin{document}
\definecolor{shadecolor}{gray}{0.9}
\maketitle

\begin{abstract}
Lawvere's open problem on quotient toposes has been solved for boolean Grothendieck toposes but not for non-boolean toposes. As a simple and non-trivial example of a non-boolean topos, this paper provides a complete classification of the quotient toposes of the topos of discrete dynamical systems, which, in this context, are sets equipped with an endofunction. This paper also offers an order-theoretic framework to address the open problem, particularly useful for locally connected toposes.

Our result is deeply related to monoid epimorphisms. At the end of this paper, utilizing the theory of lax epimorphisms in the $2$-category $\Cat$, we explain how (non-surjective) monoid epimorphisms from $\N$ correspond to 
(non-periodic) behaviors in discrete dynamical systems.
\end{abstract}
\tableofcontents

\section{Introduction}\label{SectionIntroduction}
\invmemo{Something for those who don't know topos theory}
\subsection{Main problem and its answer}
The main theorem of this paper provides a complete classification of classes of \emph{discrete dynamical systems} (a pair of a set $X$ and an endofunction $f\colon X \to X$) that are closed under finite limits and small colimits. There are numerous such classes, including those for which:
\begin{itemize}
\item Every state is in a loop. 
\[\forall x \in X,\  \exists n>0,\ f^{n}(x)=x.\]
\item Every state is eventually fixed. 
\[\forall x \in X,\  \exists n>0,\ f^{n+1}(x)=f^{n}(x).\]
\item $f$ is bijective.
\item $f$ is a bijection when restricted to $\Image{f}$.
\item Every state enters a loop within two steps, where the period of the loop has no square factors.
\[\forall x \in X,\ \exists n>0,\ (f^{n+2}(x) = f^{2} (x)) \land (\forall p:\text{prime}\ p^2 \nmid n)\]
\end{itemize}
The goal of this paper is to describe these classes uniformly and clarify the background mathematical structures.

Our main theorem (Theorem \ref{TheoremMainTheorem}) states that these classes are in one-to-one correspondence with ideals of the product poset $\rNN$, where $\N$ denotes the usual poset of natural numbers and $\cpN$ denotes the poset of all natural numbers (including $0$) with the divisibility order. Moreover, we present two specific construction methods (Proposition \ref{PropositionConstructionPrimeQuotients} and Proposition \ref{PropositionEventualBijConstruction}) and prove that all classes are produced by these constructions (Corollary \ref{CorollaryElementaryInAdvance}).

The difficulty of this problem lies in the analysis of non-periodic behaviors of states. In the study of discrete dynamical systems, examining the behavior of states, especially loops, is crucial (see, for example, \cite[][Theorem 5]{scandolo2023covariant}).
To prove our main theorem, we need to generalize the classically studied quantity, the \dq{time to enter a loop,} even for states that do not enter loops (Definition \ref{DefinitiongeneralizedHeight}).

\subsection{Relationship with monoid theory}

This problem on discrete dynamical systems involves monoid theory. Discrete dynamical systems can be thought of as actions on sets of the monoid $\N$.
Therefore, the properties of the monoid $\N$ are deeply intertwined. 

The most crucial relationship lies in the classification of monoid epimorphisms from $\N$, which are not necessarily surjections! (The simplest counterexample might be $\N \to \Z$.) 
Roughly speaking, the non-trivial \dq{non-periodic behaviors}, which we mentioned in the last subsection, correspond to non-surjective epimorphisms, while the \dq{usual behaviors} of a state correspond to surjective homomorphisms from $\N$.
Although this structure underlies the entire paper, we will postpone the discussion on this point until section \ref{SectionWhere} and Appendix \ref{AppendixMonoidEpimorphismsFromN}, after the proof of the main theorem. 

As we will mention in section \ref{SectionConclusion}, some topological monoids are also involved, including $\Z_p, \hat{\Z}$, and the one-point compactification of $\N$. This relationship with topological monoids is due to \cite{rogers2023toposes}.


\subsection{Lawvere’s open problem on quotient toposes}
\subsubsection{The open problem and current situations}
This fun puzzle is actually motivated by the first problem of Lawvere’s open problems in topos theory \cite{OpenLawvere}\footnote{Presently (February 2024), Lawvere's homepage is unavailable. However, the document can be viewed through a link provided on nLab \url{https://ncatlab.org/nlab/show/William+Lawvere}.}, \dq{Quotient toposes.} (The latter part of) the problem seeks a complete description of quotients of Grothendieck toposes (i.e., a co-reflective subcategory of a Grothendieck topos closed under finite limits). The main theorem of this paper solves this problem for the topos of discrete dynamical systems.

Research on this open problem remains underdeveloped, with much still unknown. What is known includes the description of atomic quotients \cite{henry2018localic}, the description of essential quotients of presheaf toposes \cite{el2002simultaneously}, the (external) description of hyperconnected quotients \cite{rosenthal1982quotient}, and the (internal) description of hyperconnected quotients by the first author \cite{hora2023internal}.

However, even using any of these, it is non-trivial to describe the quotients of even a very simple topos, the topos of discrete dynamical systems!

Approaching this underdeveloped open problem, obtaining a simple yet non-trivial example is crucial for gaining theoretical insights. In this sense, classifying the quotients of the very simple topos and revealing their underlying mathematical structure could be an important step towards solving the open problem.

\subsubsection{Why discrete dynamical systems?}
Among many Grothendieck toposes, why do we choose the topos of discrete dynamical systems? 
One reason is 
the fact that the topos of discrete dynamical systems is non-boolean (see Example \ref{ExampleSubobjectClassifier}).
This is because the problem for boolean Grothendieck toposes has already been solved by the first author in \cite{hora2023internal}.

Another answer is: it's simple and deep!
Despite its innocent and straightforward definition of discrete dynamical systems, it remains a mysterious object with many puzzles, like the Collatz conjecture. The two-sidedness of its simplicity and richness might be the reason why Lawvere and Schanuel chose it as one of the main topics in their introductory book on category theory \cite{lawvere2009conceptual}.
In addition to its relationship with monoid theory and dynamical systems, recent research \cite{tomasic2020topos} explores the application of the topos on difference algebra and algebraic geometry.

\subsubsection{Future works that our results suggest}
Although our results are about the specific topos, one of our aims is to gain insights for solving Lawvere's open problem. Here, we'll touch upon some of these insights.

First, we clarified the relationship with monoid epimorphisms. Actually, this is the concept of lax epimorphisms in $2$-category theory (see \cite{adamek2001functors, el2002simultaneously, nunes2022lax}) rephrased in monoid-theoretic terminology (see Proposition \ref{PropositionEquivalenceOfEpiLaxEpi}). 
Our results, especially Corollary \ref{CorollaryEssentialsAreDense}, suggest that the study of lax epimorphisms, which correspond to essential quotients, could play a crucial role in solving the open problem. To pursue this point of view, the topos of directed graphs might be the next non-trivial example (see Remark \ref{RemarkDirectedGraph}).

Second, 
our main theorem might suggest the possibility of an extended theory of \cite{hora2023internal}, which originally gives a way to classify all \emph{hyperconnected quotients}, but not all quotients. 
This is because our conclusion, the one-to-one correspondence with ideals of semilattices $\rNN$, is very similar to the classification of the hyperconnected quotients given by \cite{hora2023internal}, which states the correspondence with ideals of $(\N \times \cpN _{>} )\cup \{\infty\}$ (Corollary \ref{CorollaryHyperconnectedQuotientsOfDD}).

\subsection{Technical Novelty, the generative order}
The method we will introduce is applicable to quotients of general toposes, not only the topos of discrete dynamical systems. Our idea is simple: reduce the computation of quotients (classes of objects) to the computation of objects. We define a preorder among objects of the topos, which we call the \emph{generative order}, to be equivalent to the poset of quotients and inclusions (in the sense of Remark \ref{RemarkQuotientToposesAsClassesOfObjects}). Although its definition is, in some sense, tautological, it provides a highly convenient framework for concrete computations.

In particular, for locally connected toposes (including the topos of discrete dynamical systems), the discussion can be reduced to the generative order on the subcategory of connected objects. These technical preparations will enable us to reduce our problem to the computation of an order structure among connected discrete dynamical systems.

\subsection{Preliminaries and notations}
This paper requires knowledge of category theory, including:
\begin{enumerate}
    \item Concepts of categories, functors, and natural transformations.\label{itemConceptsCFN}
    \item Concepts of limits and colimits.\label{itemConceptsLim}
    \item Concept of adjunction.\label{itemConceptsAdj}
    \item Basic concepts of topos theory.\label{itemConceptsTopos}
\end{enumerate}

For (\ref{itemConceptsCFN}), (\ref{itemConceptsLim}), and (\ref{itemConceptsAdj}), the reader can refer to basic textbooks on category theory \cite{awodey2010category, mac2013categories, riehl2017category}. For (\ref{itemConceptsTopos}), see subsection \ref{subsectionDefQuotients} and Appendix \ref{AppendixGeometricMorphisms}, or the references cited therein.

\begin{notation}\label{NotationNNN}
    We adopt the following notations:
    \begin{itemize}
        \item $\N$ denotes the set of all non-negative integers.
        \[\N = \{0,1,2,\dots\}\]
        We also call $\N$ the set of natural numbers.
        When we regard it as a category, $\N$ denotes the one-object category. 
        As a poset, the same notation denotes the usual totally ordered set $0<1<2<3<\dots$.
        \item $\cN$ denotes the set of all natural numbers with a formal symbol $\infty$.
        \[\cN = \N \cup \{\infty\}\] This set is equipped with the intended total order $0<1<2<3<\dots<\infty$.
        \item $\cpN$ denotes the poset of all natural numbers $\{0,1,2,3, \dots\}$ equipped with the divisibility partial order.
        For example, in this poset, we have $2<4<12<0$ and $2\not < 3$.
    \end{itemize}
\end{notation}
\section{Discrete dynamical systems}
\subsection{Definition and Examples}
First, we define the notion of discrete dynamical systems and see some theoretically important examples.
\begin{definition}
    A discrete dynamical system $\X=(X,f)$ is a set $X$ equipped with an endofunction $f\colon X\to X$. A morphism $h\colon (X,f)\to (Y,g)$ is a function $h\colon X \to Y$ such that 
    \[
    \begin{tikzcd}
        X\ar[r,"f"]\ar[d,"h"]&X\ar[d,"h"]\\
        Y\ar[r,"g"]&Y
    \end{tikzcd}
    \]
    commutes.
\end{definition}

Equivalently, a discrete dynamical system is a presheaf over the additive monoid of natural numbers $\N$ regarded as a category with one object. A morphism of discrete dynamical systems is a natural transformation. For those reasons, the category of discrete dynamical systems is denoted by $\DD (\cong \Set^{\N})$.

Notice that other articles call them by other names. For example, they are called \emph{sets-with-an-endomap}, \emph{dynamical systems} or \emph{automata} in \cite{lawvere2009conceptual}, and \emph{difference sets} in \cite{tomasic2020topos}. From the viewpoint of monoid actions, they are also called \emph{$\N$-sets}.

\begin{example}[Canonical endomorphism]\label{ExampleCanonicalEndo}
    For a discrete dynamical system $\X=(X,f)$, the endofunction $f$ on  $X$ is an endomorphism on 
    $\X$
    \[
    \begin{tikzcd}
        X\ar[r,"f"]\ar[d,"f"]&X\ar[d,"f"]\\
        X\ar[r,"f"]&X.
    \end{tikzcd}
    \]
    We will utilize this endomorphism repeatedly for various constructions, especially in subsection \ref{SubsectionConnectedIsom}.
\end{example}

\begin{example}[Free dynamical system]\label{ExampleNsucc}
    A prototypical example of a discrete dynamical system is $\Nsucc$, which is the free object $F{1}$ generated by a singleton $1$ via the free-forgetful adjunction.
    \[\ADJ{\DD}{U}{\Set}{F}\]
    By the density theorem, every discrete dynamical system is canonically a colimit of a diagram in which every object is $(\N, \mathrm{succ})$. This fact is used later (Lemma \ref{LemmaNsuccisMaximum}).
\end{example}

\begin{example}[Monoid]\label{ExampleMonoidDD}
    For a monoid $M$ and an element $a\in M$, we can construct a discrete dynamical system $(M, -\ml a)$. This paper will study a certain class of monoids, and this construction is the essential bridge between monoids and discrete dynamical systems.

    This discrete dynamical system captures some properties of the element $a$. For example, $-\ml a$ is injective if and only if $a$ is right cancellative. $-\ml a$ is surjective if and only if $a$ has a left inverse. In Appendix \ref{AppendixMonoidEpimorphismsFromN}, we will utilize the properties of an element described in terms of the induced discrete dynamical system.
    \end{example}


\begin{example}[As a directed graph]\label{ExampleVisualization}
    A discrete dynamical system $\X=(X,f)$ may be visualized as a directed graph whose vertices have out-degree $1$. The vertices are the elements of $X$ and an edge $x\to f(x)$ is drawn for each $x\in X$. For example, $\X=(\Z/4\Z, -\times 2)$ (see \ref{ExampleMonoidDD}) is visualized as Figure \ref{PictureOfSample}.
    \begin{figure}[ht]
        \begin{shaded}
        \centering
        \begin{tikzpicture} [scale = \scvalue]
            \fill[black] (-1,2) circle (0.06) node[left]{$1$};
            \fill[black] (1,2) circle (0.06) node[right]{$3$};
            \fill[black] (0,1) circle (0.06) node[left=5pt]{$2$};
            \fill[black] (0,0) circle (0.06) node[left = 5pt]{$0$};
            \draw[black, thick] (0,-0.30) circle (0.30) node{};
            \draw[black, thick,\arst](-1,2)--(0,1);
            \draw[black, thick,\arst](0,1)--(0,0);
            \draw[black, thick,\arst] (1,2)--(0,1);
            \draw[black, thick,\arst] (-0.01,-0.002)--(0,0);
        \end{tikzpicture}
        
        \caption{Associated graph of $(\Z/4\Z, - \times 2)$}
        \label{PictureOfSample}
        \end{shaded}
    \end{figure}
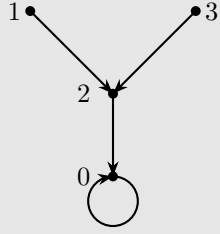
    \end{example}

    \begin{remark}\label{RemarkDirectedGraph}
    This visualization also arises from a kind of nerve and realization adjunction. Furthermore, it proves that $\ps{\N}$ is a quotient topos of the topos of directed graphs. Let $\mathrm{Par}$ denote the parallel morphism category, which looks like
    \[
    \begin{tikzcd}
        \bullet\ar[r,shift right,"t"']\ar[r,shift left,"s"]& \bullet.
    \end{tikzcd}
    \]
    The category of directed graphs is equivalent to the presheaf category over $\mathrm{Par}$. The localization of the category $\mathrm{Par}$ by the set of morphisms $\{t\}$ is equivalent to the monoid $\N$ regarded as a one-object category. The associated functor $p\colon \mathrm{Par} \to \N$ induces a fully faithful functor $-\circ p \colon \DD\to \ps{\mathrm{Par}}$, because $p$ is a localization (and hence a lax epimorphism \cite{adamek2001functors}). 
    Drawing the image of a discrete dynamical system under the functor in the usual way produces the visualization adopted above.
    
    Furthermore, this induced functor is a part of connected geometric morphism
    \[\ADJ{\ps{\mathrm{Par}}}{\mathrm{Ran}_{p}}{\DD.}{-\circ p}\]
    Consequently, we can regard $\DD$ as a quotient topos of $\ps{\mathrm{Par}}$ consisting of a directed graph whose vertices have out-degree $1$. 
    
    In this paper, we will classify the quotients of $\DD$. However, it is merely a subproblem of a much more difficult question, quotients of the directed graph topos!
    \end{remark}


\begin{example}[Subobject classifier]\label{ExampleSubobjectClassifier}
Since the category of discrete dynamical systems is a presheaf topos, it has a subobject classifier $\Omega$, which looks like Figure \ref{PictureOfSubobjectclassifier}. Intuitively, this subobject classifier describes the \dq{time until a subobject is reached}. We will utilize this in Section \ref{SectionMainTheorem}.
    \begin{figure}[ht]
    \begin{shaded}
    \centering
        \begin{tikzpicture} [scale = \scvalue]
            \fill[black] (0,0) circle (0.06) node[above]{$0$};
            \fill[black] (1,0) circle (0.06) node[above]{$1$};
            \fill[black] (2,0) circle (0.06) node[above]{$2$};
            \fill[black] (3,0) circle (0.06) node[above]{$3$};
            \fill[black] (4,0) circle (0.06) node[above]{$4$};
            \fill[black] (5,0) circle (0.00) node[]{$\cdots$};
            \fill[black] (6,0) circle (0.06) node[above]{$\infty$};
            \draw[black, thick,\arst](1,0)--(0,0);
            \draw[black, thick,\arst](2,0)--(1,0);
            \draw[black, thick,\arst](3,0)--(2,0);
            \draw[black, thick,\arst](4,0)--(3,0);
            \draw[black, thick,\arst](4.5,0)--(4,0);
            \draw[black, thick] (0,-0.3) circle (0.3) node[]{};
            \draw[black, thick,\arst](0+0.1,0-0.03)--(0,0);
            \draw[black, thick] (6,-0.3) circle (0.3) node[]{};
            \draw[black, thick,\arst](6+0.1,0-0.03)--(6,0);
        \end{tikzpicture} 
    \caption{Associated graph of the subobject classifier $\Omega$}
    \label{PictureOfSubobjectclassifier}
    \end{shaded}
\end{figure}
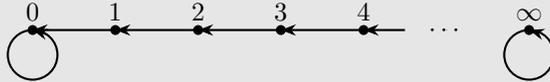
\end{example}

\subsection{Connectedness}\label{SubsectionConnectednessDD}
Our classification of quotient toposes is based on the notion of connectedness (see subsection \ref{subsectionLocallyConnectedTopos}). 
\begin{definition}[Connectedness]\label{DefinitionConnectedDD}
    A discrete dynamical system $\X=(X,f)$ is connected if $\X$ satisfies the following equivalent conditions:
    \begin{enumerate}
        \item $X\neq \emptyset$, and if $X$ is divided into a binary coproduct $X = S \coprod T$ of two subobjects, then either $S$ or $T$ is empty and the other is equal to $X$.
        \item $X\neq \emptyset$, and for any $x,y\in X$, there exist natural numbers $n,m$ such that \[f^n (x)=f^{m}(y).\] \label{ConditionConcreteConnectedness}
        \item The represented functor $\DD (\X,-)\colon \DD \to \Set$ preserves small coproducts.
    \end{enumerate}
\end{definition}
\invmemo{Proof of the equivalence}
The equivalence of the above three conditions is not hard to prove.
More intuitively, $\X$ is connected if and only if it is \dq{visually connected}, i.e., its associated directed graph Example \ref{ExampleVisualization} is connected. 

We will later utilize the fact that every discrete dynamical system is a coproduct of connected ones. This is equivalent to saying that $\DD$ is a \emph{locally connected topos}. 
This fact constitutes the core of our approach to the classification of quotients of $\DD$ (see subsection \ref{subsectionLocallyConnectedTopos}).


\begin{example}[Collatz map]\label{ExampleCollatz}
The Collatz map, which appears in the Collatz conjecture, is one of the most fascinating and enigmatic examples of discrete dynamical systems. This map is defined on the set of positive integers $\N_{>}$ as an endofunction, where even numbers are divided by $2$ and odd numbers are multiplied by $3$ and $1$ is added to the result (Figure \ref{PictureOfCollatz2}).
\[
n\mapsto
\begin{cases}
    n/2 & (n\text{: even})\\
    3n+1 & (n\text{: odd})
\end{cases}
\]
The question of whether this discrete dynamical system is connected or not remains a famous unsolved problem.
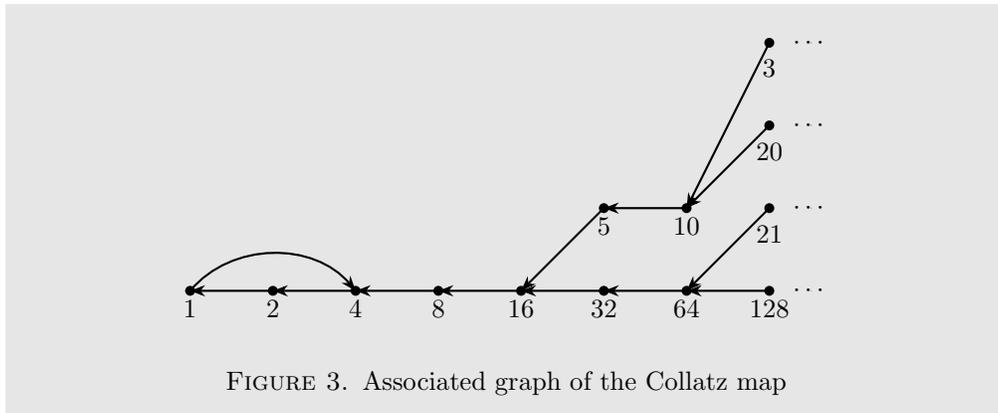
\begin{figure}[ht]
    \begin{shaded}
    \centering
        \begin{tikzpicture} [scale = \scvalue]
            \fill[black] (0,0) circle (0.06) node[below]{$1$};
            \draw[black, thick,\arst, bend right] (0,0)to [out=50,in=-230](2,0);
            \fill[black] (1,0) circle (0.06) node[below]{$2$};
            \draw[black, thick,\arst](1,0)--(0,0);
            \fill[black] (2,0) circle (0.06) node[below]{$4$};
            \draw[black, thick,\arst](2,0)--(1,0);
            \fill[black] (3,0) circle (0.06) node[below]{$8$};
            \draw[black, thick,\arst](3,0)--(2,0);
            \fill[black] (4,0) circle (0.06) node[below]{$16$};
            \draw[black, thick,\arst](4,0)--(3,0);
            \fill[black] (5,0) circle (0.06) node[below]{$32$};
            \draw[black, thick,\arst](5,0)--(4,0);
            \fill[black] (5,1) circle (0.06) node[below]{$5$};
            \draw[black, thick,\arst](5,1)--(4,0);
            \fill[black] (6,0) circle (0.06) node[below]{$64$};
            \draw[black, thick,\arst](6,0)--(5,0);
            \fill[black] (6,1) circle (0.06) node[below]{$10$};
            \draw[black, thick,\arst](6,1)--(5,1);
            \fill[black] (7,0) circle (0.06) node[below]{$128$};
            \draw[black, thick,\arst](7,0)--(6,0);
            \fill[black] (7,1) circle (0.06) node[below=3pt]{$21$};
            \draw[black, thick,\arst](7,1)--(6,0);
            \fill[black] (7,2) circle (0.06) node[below=3pt]{$20$};
            \draw[black, thick,\arst](7,2)--(6,1);
            \fill[black] (7,3) circle (0.06) node[below=3pt]{$3$};
            \draw[black, thick,\arst](7,3)--(6,1);
            
            \fill[black] (7.5,0) circle (0) node{$\cdots$};
            \fill[black] (7.5,1) circle (0) node{$\cdots$};
            \fill[black] (7.5,2) circle (0) node{$\cdots$};
            \fill[black] (7.5,3) circle (0) node{$\cdots$};
        \end{tikzpicture} 
    \caption{Associated graph of the Collatz map}
    \label{PictureOfCollatz2}
    \end{shaded}
\end{figure}
\end{example}

    

    

    


\subsection{Period and Height}
In order to analyze a discrete dynamical system in detail, it is necessary to classify the behavior of each element. In particular, focusing on periodic behaviors, we introduce the following two quantities.

\begin{definition}[Period and Height of an element]\label{DefinitionPeriodHeightElement}
    For a discrete dynamical system $\X = (X,f)$ and an element $x\in X$, if the sequence
    \[x, f(x), f^2(x), f^3 (x) ,\cdots\]
    eventually becomes periodic, call its minimum period the \textit{period of $x$}, and the number of first exceptions before it becomes periodic, the \textit{height of $x$}. If the sequence is never periodic, the period of $x$ is defined to be $0$, and the height is not defined (but will be defined in Definition \ref{DefinitiongeneralizedHeight}).
\end{definition}
In other words, the pair of the height $a$ and the period $b$ satisfies $f^{a+b}(x)=f^{a}(x)$, and it is minimum among such pairs. For example, for the Collatz map, 
\[\underbrace{10 \mapsto 5 \mapsto 16 \mapsto 8}_{\text{first }4 \text{ exceptions}}\mapsto \underbrace{ 4 \mapsto 2 \mapsto 1}_{\text{period }3} \mapsto \underbrace{4 \mapsto 2 \mapsto 1}_{\text{period }3} \mapsto \dots\]
the element $10\in \N_{>}$ has the height $4$ and the period $3$.
For the discrete dynamical system $\Nsucc$, every element has the period $0$, and its height is not (yet) defined.



\section{List of all quotients}\label{SectionConclusion}
In this section, we will state our conclusion.
We will define the notion of quotients of a topos in subsection \ref{subsectionDefQuotients} and list all quotients of the topos $\DD$ without proofs in the remaining subsections. The proofs will be given in Section \ref{SectionMainTheorem}.
\invmemo{Write it later}

\subsection{Definition of quotients}\label{subsectionDefQuotients}
In this subsection, we briefly recall some notions of toposes. For those who are not familiar with topos theory, it will not cause any serious problem if one just skips this subsection and regards a quotient of $\DD$ as a class of discrete dynamical systems that is closed under finite limits and small colimits (see Corollary \ref{CorollaryDDprequotientIsQuotient}). 

\begin{definition}[topos]\label{DefinitionTopos}
    A topos is a cartesian closed and finitely complete category with a subobject classifier.
\end{definition}
For details of the definition of topos, one can refer to basic textbooks on topos theory \cite{borceux1994handbookv3, goldblatt1984topoi, johnstone2002sketchesv1, johnstone2014topos, maclane1994sheaves}. However, for the purpose of this paper, only a little knowledge of toposes is needed. The most important one is the fact that every presheaf category is a topos, and hence $\DD$ is a topos.

A \emph{Grothendieck topos} is defined as a category that is equivalent to the sheaf category over a small site. Details of this notion are also found in the above textbooks. A Grothendieck topos is a topos in the sense of Definition \ref{DefinitionTopos}. Furthermore, it is locally small and complete and cocomplete. Every presheaf topos is a Grothendieck topos, and hence so is $\DD$.


\begin{definition}[Quotients]
    A quotient of a topos $\E$ is a full subcategory $Q$, whose inclusion functor preserves finite limits (i.e., is left exact) and has a right adjoint.
\[
\begin{tikzcd}[column sep = 10pt]
    \E \ar[rr, shift right=6pt]&\rotatebox{90}{$\vdash$}&Q.\ar[ll, shift right =6pt, hookrightarrow]
\end{tikzcd}
\]
\end{definition}
In topos theoretic terminology, a quotient is usually defined as a \emph{connected geometric morphism} from $\E$ (Definition \ref{DefinitionConnectedGM}). (For those who know topos theory, notice that $Q$ is also a topos, since $Q$ is the category of coalgebras of a left exact comonad.) We adopt the above definition just to make it available to readers who are not familiar with topos theory. For quotients and connected geometric morphisms, see \cite[C1.5 of][]{johnstone2002sketchesv2}.

\begin{remark}[Identification of quotient toposes]\label{RemarkIdentificationOfQuotiens}
By definition, a quotient topos is a full subcategory, but what we are interested in (in our context) is the essential image of its embedding. In other words, two quotient toposes whose embeddings have the same essential image are not distinguished in this paper. (This condition is equivalent, in terms of geometric morphisms, to the existence of an equivalence between the codomains of the corresponding two connected geometric morphisms, and further, that this equivalence commutes up to natural isomorphism.)
\end{remark}

\begin{remark}[Quotient toposes and classes of objects]\label{RemarkQuotientToposesAsClassesOfObjects}
    Based on the above remark, we sometimes treat a quotient topos (or, more generally, a full subcategory) as the class of objects that belong to the essential image of the embedding functor. For example, when we speak of the inclusion relations of quotient toposes, we refer to the inclusion relations of the corresponding classes of objects.
\end{remark}



To reinforce the understanding of the definition, we give an example of a topos where the classification of quotients is very simple. For other examples and more detailed discussions, see section \ref{sectionGenerativeOrder}.

\begin{example}[Quotients of the topos of sets]\label{ExampleQuotientSets}
    There is only one quotient for the topos of sets $\Set$. 
    Let $Q$ be an arbitrary quotient of $\Set$. As easily shown, $Q$ is closed under finite limits and small colimits (Lemma \ref{LemmaQuotientIsPrequotient}). 
    Therefore, the quotient $Q$ contains a singleton, which is the terminal object of $\Set$. Furthermore, since $Q$ is closed under taking small coproducts, 
    $Q$ contains every set (up to bijection). Consequently, the quotient $Q$ is $\Set$ itself.
\end{example}
\invmemo{We can write more.}

All of the following three examples of quotients are explained in \cite{johnstone2002sketchesv1} and \cite{johnstone2002sketchesv2}.
\begin{example}[Connectedness]\label{ExampleQuotientAsConnectedness}
    Part of the importance of the concept of a quotient lies in its close relationship with connectedness. A Grothendieck topos is said to be \emph{connected}, if it contains the category of sets $\Set$ as a quotient. (In topos theoretic terminology,  it is usually claimed that \dq{the unique geometric morphism to $\Set$ is connected.})

    This condition captures the usual connectedness of topological spaces and categories. The sheaf topos over a topological space $X$ is connected as a topos, if and only if $X$ is connected as a topological space. The presheaf topos over a small category $\C$ is connected as a topos, if and only if $\C$ is connected as a category.
\end{example}

\begin{example}[Bijective on objects and full functor]\label{ExampleBOFquotient}
 If a functor $F\colon \C \to \D$ between two small categories $\C, \D$ is bijective on objects and full, then the induced precomposition functor
 \[\ps{\C}\hookleftarrow \ps{\D}\]
 is fully faithful and defines a quotient. For example, a surjective monoid homomorphism $p\colon M \to N$ induces a quotient 
  \[\ps{M}\hookleftarrow \ps{N}.\]

  However, the condition \dq{bijective on objects and full} is too strong just for inducing a quotient. The sufficient and necessary condition is studied in \cite{adamek2001functors}. We will use this result in Section \ref{SectionWhere}.
\end{example}

\begin{example}[Topological Groups]\label{ExampleTopologicalGroupQuotient}
    For a topological group $G$, its continuous action topos $G$-$\Set$ is a quotient of the discrete actions topos of $G$, via the canonical embedding functor. It is known that every quotient of the group action topos is induced by a topology on the group (for example, see \cite{hora2023internal}).

    This topological construction can be extended to topological monoids (\cite{rogers2023toposes}). However, for monoid action toposes including $\DD$, not all quotients are necessarily induced from topology. 
    In fact, in \cite{rogers2023toposes}, the topos $\DD$ is used to exemplify that even a \emph{hyperconnected} quotient may not be induced by a topology.
\end{example}


\subsection{Quotients via Prime numbers}\label{subsectionPrime}

In this subsection, we introduce a construction of quotients of $\DD$, using prime numbers, and give a few examples of quotients.
\begin{notation}
    Let 
     $\cP$ denote the set of all prime numbers with a formal symbol $\infty$
    \[\cP=\{2,3,5,\dots\}\cup \{\infty\}.\]
\end{notation}

As explained in Notation \ref{NotationNNN}, $\cN$ denotes the set of all non-negative integers with the formal symbol $\infty$
 \[\cN = \{0,1,2,\dots\} \cup \{\infty\}.\]

\begin{proposition}[Quotients via Prime numbers]
\label{PropositionConstructionPrimeQuotients}For a function $\alpha\colon \cP \to \cN$, the full subcategory of $\DD$ that consists of a discrete dynamical system $\X=(X,f)$ that satisfies the following conditions is a quotient.

\textbf{Condition}: For every $x\in X$,
        \begin{enumerate}
            \item for any prime number $p$, the period of $x$ is non-zero and divided by $p$ at most $\alpha(p)$ times.
            \item the height of $x$ is at most $\alpha(\infty)$.
        \end{enumerate}
\end{proposition}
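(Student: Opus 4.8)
The plan is to verify directly that the full subcategory $Q_\alpha \subseteq \DD$ cut out by the stated condition is closed under finite limits and small colimits, and then invoke the fact (Corollary \ref{CorollaryDDprequotientIsQuotient}, promised in the text) that any such full subcategory of $\DD$ is in fact a quotient. So the real content is a closure computation, and the key observation to exploit is that the condition is \emph{local to orbits}: it is phrased as a universally quantified statement over elements $x \in X$, and the truth of the condition at $x$ depends only on the sub-dynamical-system generated by $x$, i.e.\ on the eventual period and the height of $x$. Thus $Q_\alpha$ is closed under subobjects and under arbitrary coproducts essentially for free, since an element of $\coprod_i X_i$ lies in exactly one summand and its orbit stays there.

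Next I would handle the remaining colimits and limits. For colimits, since $\DD$ is cocomplete and every colimit is a coequalizer of a coproduct, and since we already have coproducts, it remains to check coequalizers (equivalently, quotients by congruences): if $\X \in Q_\alpha$ and $q\colon \X \twoheadrightarrow \Y$ is a regular epi, I must show the period and height of $q(x)$ in $\Y$ do not get \emph{worse} than those of $x$ — concretely, that the period of $q(x)$ divides the period of $x$ (so its $p$-adic valuation can only drop) and the height of $q(x)$ is at most that of $x$. This is because $f^{a+b}(x) = f^a(x)$ in $X$ implies $g^{a+b}(q(x)) = g^a(q(x))$ in $Y$, so any (height, period) pair witnessing membership for $x$ also witnesses it for $q(x)$ after minimizing; one also needs that periodicity of $q(x)$ forces the period to be nonzero, which is automatic once the orbit of $q(x)$ is eventually periodic, and it is, being a quotient of an eventually periodic orbit. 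For finite limits: $\DD$ has finite limits computed pointwise, the terminal object (a single fixed point) has period $1 \mid p^0$ and height $0$, so it lies in $Q_\alpha$; and for pullbacks it suffices to check that $Q_\alpha$ is closed under binary products and equalizers. Equalizers are subobjects, already done. For a product $\X \times \Y$ with $\X,\Y \in Q_\alpha$, an element $(x,y)$ has period equal to $\Lcm$ of the periods of $x$ and $y$ and height equal to $\max$ of the two heights; so its height is $\le \alpha(\infty)$, and its period is nonzero with $v_p(\Lcm) = \max(v_p(\mathrm{per}(x)), v_p(\mathrm{per}(y))) \le \alpha(p)$. This is the one spot where the specific shape of the condition — being an upper bound on $v_p$ of the period and on the height, both of which behave well under $\max$/$\Lcm$ — is genuinely used, and it is also where one sees \emph{why} $\alpha$ is allowed to take the value $\infty$ and why the product poset $\rNN$ (with the ordinary order on heights and divisibility on periods) is the right indexing object.

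The main obstacle is the coequalizer case, and within it the subtlety that a quotient of a \emph{non}-eventually-periodic orbit (period $0$, height undefined) could in principle become eventually periodic, so one cannot simply say ``periods divide and heights decrease.'' But since $\X \in Q_\alpha$ already forces every orbit of $\X$ to be eventually periodic with finite height $\le \alpha(\infty)$, this pathological case never arises inside $Q_\alpha$; the genuinely delicate bookkeeping is just confirming that passing to a quotient of a single eventually-periodic orbit (a ``rho-shaped'' graph: a tail of length $\le a$ feeding into a cycle of length $b$) yields another eventually-periodic orbit whose tail length is $\le a$ and whose cycle length divides $b$. I would prove this by the elementary argument above via $g^{a+b}(q(x)) = g^a(q(x))$, noting that the minimal such pair for $q(x)$ has height $\le a$ and period dividing $b$ by the standard fact that the set of pairs $(c,d)$ with $g^{c+d}(y)=g^c(y)$ is upward-closed in $\N \times \cpN$ under this order. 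Everything else (coproducts, subobjects, terminal, products) is routine, so the write-up would spend most of its words on this point and then conclude by citing Corollary \ref{CorollaryDDprequotientIsQuotient}.
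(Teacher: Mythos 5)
Your closure computations (products via $\Lcm$/$\max$, equalizers as subobjects, coproducts trivially, coequalizers by the $g^{a+b}(q(x))=g^a(q(x))$ argument) are all correct and careful, and in fact overlap with the paper's left-exactness argument. But there is a serious structural problem at the very last step: you cannot invoke Corollary \ref{CorollaryDDprequotientIsQuotient} here, because it is circular. That corollary is deduced from Theorem \ref{TheoremMainTheorem}, whose proof uses Proposition \ref{PropositionConstructionPrimeQuotients}; moreover, the embedding step Proposition \ref{PropositionConnectedIsomorphic} explicitly calls on Proposition \ref{PropositionConstructionPrimeQuotients} to produce quotients that separate $\T{a}{b}$ from $\T{a'}{b'}$. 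More fundamentally, without that (still-unproved) corollary, knowing $Q_\alpha$ is a prequotient does not by itself tell you the inclusion has a right adjoint; Remark \ref{RemarkQuotientandPrequotient} makes clear this implication is not free in general.

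The fix is short, and it is what the paper does. Since you have already established that the condition is local to an element's orbit and passes to subobjects and forward images, the right adjoint can be written down explicitly: $R\X$ is the sub-dynamical-system of $\X$ consisting of those $x$ satisfying the two conditions. The subset is closed under $f$ (forward images can only decrease height and keep the same period, hence keep satisfying the conditions), so it is a subobject; and any morphism $\Y \to \X$ with $\Y \in Q_\alpha$ has image landing inside $R\X$ because the condition is preserved under morphisms (heights decrease, periods divide). This gives the coreflection directly. With that, your left-exactness check — the condition is inherited by subobjects and by finite products via $\Lcm$ of periods and $\max$ of heights — finishes the proof, and the entire colimit closure discussion (which occupies most of your write-up) becomes unnecessary.
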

\begin{proof}
    As we will see in Section \ref{SectionWhere}, this follows from a general theorem in the author's paper \cite{hora2023internal}. However, a direct proof is not hard. The right adjoint $R$ of the inclusion functor is given by defining $RX$ to be the subset of elements that satisfies the above two conditions.
    
    For the left exactness of the inclusion, it is enough to see that the above condition is inherited by finite limits. The condition is inherited by subobjects, in particular to equalizers. For a finite product, the period of $(x_1, \dots x_n)\in X_1 \times \dots \times X_n$ is given by the least common multiple of the periods of each component, and the height is given by the maximum height. 
\end{proof}

\begin{remark}[Hyperconnected quotients]
    In the above proof, we have proven that the constructed quotient is closed under taking subobjects. A quotient with this subobject-closed property is called \emph{hyperconnected quotient} \cite{johnstone1981factorization} and is particularly important in topos theory (see \cite[A4.6 of ][]{johnstone2002sketchesv1}). 
    
    The classification method for hyperconnected quotients is known \cite{hora2023internal}. The above proposition is due to calculations using a known theorem rather than an insight. See Section \ref{SectionWhere} for more details.
\end{remark}

In the remaining part of this subsection, we will see some examples of quotients, constructed by Proposition \ref{PropositionConstructionPrimeQuotients}.

\begin{example}[Quotient of trivial systems]\label{ExampleTrivialSystems}
First we give a trivial example of a quotient.
    Defining $\alpha\colon \cP \to \cN$ by
    \[
    \alpha (q) = 0
    \]
    the associated quotient consists of the identity functions $\id_{X}\colon X\to X$. This quotient is equivalent to the topos of sets $\Set$. This fact correspond to the \emph{connectedness} of the topos $\DD$ and the category $\N$ (see \cite[C1.5 of ][]{johnstone2002sketchesv2}).
\end{example}

\begin{example}[Quotient of loops]\label{ExampleLoopsProfiniteCompletion}
    Defining $\alpha\colon \cP \to \cN$ by
    \[
    \alpha (q) = 
    \begin{cases}
        0 & (q=\infty)\\
        \infty & (q< \infty),
    \end{cases}
    \]
    the associated quotient consists of coproducts of loops.
    In other words, for every discrete dynamical system $\X=(X,f)$ in the quotient, for any $x\in X$ there exists $m>0$ such that $f^{m}(x)=x$. This quotient is equivalent to the topos of continuous actions of a topological group $\hat{\Z}$ of profinite integers.
\end{example}

\begin{example}[Eventually fixed systems]\label{ExampleEventualyFixed}
    A discrete dynamical system $\X = (X,f)$ is \emph{eventually fixed}, if for every $x\in X$, there exists $n\in \N$ such that $f^{n} (x)$ is a fixed point of $f$. The full subcategory of $\DD$ that consists of all eventually fixed discrete dynamical systems is a quotient, constructed by 
    \[
    \alpha (q) = 
    \begin{cases}
        \infty & (q=\infty)\\
        0 & (q<\infty).
    \end{cases}
    \]
    This quotient is equivalent to the topos of topological monoid actions of the one-point compactification of the discrete monoid $\N$.
\end{example}

\begin{example}[Eventually periodic systems]\label{ExampleEventualyPeriodic}
    A discrete dynamical system $\X = (X,f)$ is \emph{eventually periodic}, if for every $x\in X$, there exists $n\geq 0$ and $m>0$ such that $f^{n} (x)= f^{n+m}(x)$. The full subcategory of $\DD$ that consists of all eventually periodic discrete dynamical systems is a quotient, constructed by 
    \[
    \alpha (q) = \infty.
    \]
\end{example}


\begin{example}[$\Z/12 \Z$-actions]\label{ExampleTwelveLoop}
    Defining $\alpha\colon \cP \to \cN$ by
    \[
    \alpha (q) = 
    \begin{cases}
        2 & (q=2)\\
        1 & (q=3)\\
        0 & (\text{others}),
    \end{cases}
    \]
    the associated quotient consists of coproducts of loops whose periods are divisors of $2^2\cdot 3 = 12$. This quotient is equivalent to the topos of $\Z/12\Z$-actions. This construction also works for any positive integers other than $12$.
\end{example}

\begin{example}[$7$-adic number actions]\label{Examplepadic}
    Defining $\alpha\colon \cP \to \cN$ by
    \[
    \alpha (q) = 
    \begin{cases}
        \infty & (q=7)\\
        0 & (q\neq 7),
    \end{cases}
    \]
    the associated quotient consists of coproducts of loops whose periods are powers of $7$. This quotient is equivalent to the topos of continuous actions of a topological group $\Z_7$. This construction also works for any prime numbers other than $7$.
\end{example}

\begin{example}[Weird quotient]\label{ExampleWeird}
At last, we intentionally pick a weird quotient to observe the diversity of quotients.
    Defining $\alpha\colon \cP \to \cN$ by 
    \[\alpha(q) =2,\]
    a discrete dynamical system $\X = (X,f)$ belongs to the associated quotient, if and only if, for any element $x\in X$, $x$ goes into a loop within two steps, and its period has no cubic factors. 
\invmemo{Use this in Introduction}
\end{example}


\begin{remark}[Relation to topological monoid action toposes]
    In the above examples, we mentioned some topological monoids. As studied in \cite{rogers2023toposes}, (hyperconnected) quotients of monoid action toposes (including $\DD$) are closely related to topological monoid action toposes.
    
    The quotient of eventually periodic discrete dynamical systems (Example \ref{ExampleEventualyPeriodic}) is used to show that some hyperconnected quotients are not induced by a topology on the given monoid in \cite{rogers2023toposes}.
\end{remark}



\subsection{Quotient of eventual bijections}\label{subsectionEventualBijection}
Proposition \ref{PropositionConstructionPrimeQuotients} in the previous section is powerful and provides a method to construct $2^{\aleph_0}$ quotients. However, it does not cover all quotients. In this section, we will introduce another construction method. Later in Section \ref{SectionMainTheorem}, we will prove that these two methods together exhaust all possible quotients.

\begin{proposition}[Quotient of eventual bijections]\label{PropositionEventualBijConstruction}
    For a non-negative integer $a\in \N$, the full subcategory of $\DD$ that consists of a discrete dynamical system $\X=(X,f)$ that satisfies the following condition is a quotient.
    
    \textbf{Condition}: the restriction of $f$ to $\Image{f^{a}}$ 
        \[f
        \colon \mathrm{Im}(f^{a})\to \Image{f^{a}}
        \]
        is bijective.
\end{proposition}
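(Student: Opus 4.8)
The plan is to verify directly that the inclusion $I\colon Q\hookrightarrow\DD$ is left exact and admits a right adjoint $R$; that is precisely what makes $Q$ a quotient (and then $Q$ is automatically a topos, the coalgebras of the left exact comonad $IR$).

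\emph{The right adjoint.} The picture behind the construction is that every object of $Q$ is a $\Z$-set --- its ``eventual image'' $\Image{f^{a}}$, on which $f$ restricts to an automorphism --- carrying trees of depth at most $a$ that flow into it. I would build $R\X$ in matching layers. First let $A_\X$ be the set of backward orbits $(x_n)_{n\le 0}$ with $f(x_n)=x_{n+1}$; this is the limit in $\DD$ of the tower $\cdots\xrightarrow{f}\X\xrightarrow{f}\X$, the shift makes $f$ act invertibly on it, and $(x_n)\mapsto x_0$ is its structure map to $\X$. For $a=0$ we simply set $R\X=A_\X$: this is the right Kan extension along the localization $\N\to\Z$, and its universal property $\DD(\Y,\X)\cong\DD(\Y,A_\X)$ for a $\Z$-set $\Y$ is the routine computation of $\mathrm{Ran}$ (a map $\Y\to\X$ extends uniquely to a compatible family by precomposing with negative powers of the invertible action of $\Y$). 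For $a\ge 1$, enlarge $A_\X$: for every $1\le d\le a$, every $\xi\in A_\X$ and every $z\in X$ with $f^{d}(z)=\xi_0$, adjoin a state ``of depth $d$, over $\xi$, with value $z$'', letting $f$ move it one step toward $A_\X$; then pass to the quotient identifying $(d,\xi,z)$ with $(d-1,\mathrm{unshift}(\xi),z)$ whenever $f^{d-1}(z)=\xi_{-1}$ --- reading $(0,\xi',z)$ as the backward orbit through $z$ --- so that a depth-$d$ state is collapsed onto the lowest-depth state with the same value once its branch has merged into the underlying orbit. One checks $R\X\in Q$ (its eventual image is $A_\X$), that the maps $\epsilon_\X\colon R\X\to\X$ sending a state to its value are natural, and that every $h\colon\Y\to\X$ with $\Y\in Q$ factors uniquely through $\epsilon_\X$: on the eventual image of $\Y$ the factorization is forced by the $a=0$ case, and on a depth-$d$ state $y$ the value $h(y)$ together with the image under $f^{d}$ (which lands in the eventual image, where everything is already pinned down) determines the class of the lift in the quotient $R\X$.

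\emph{Left exactness.} It suffices to see that the terminal object, binary products and equalizers formed in $\DD$ remain in $Q$. The terminal object $(\{\ast\},\id)$ lies in $Q$ trivially; for a product, $\Image{(f\times g)^{a}}=\Image{f^{a}}\times\Image{g^{a}}$ and $(f\times g)$ restricted to it is $f|_{\Image{f^{a}}}\times g|_{\Image{g^{a}}}$, bijective iff both factors are. Let $E=\{x\in X:h(x)=k(x)\}$ be the equalizer of $h,k\colon\X\rightrightarrows\Y$ with $\X,\Y\in Q$, write $C=\Image{f_{\X}^{a}}$ and let $\sigma=f|_{C}$, an automorphism of $C$. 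Injectivity of $f$ on $\Image{(f|_E)^{a}}\subseteq C$ is inherited from $\X$. For surjectivity, given $e\in\Image{(f|_E)^{a}}$ one shows $\sigma^{-j}(e)\in E$ for all $j\ge 0$: indeed $\sigma^{-j}(e)\in C$, so $h(\sigma^{-j}(e)),k(\sigma^{-j}(e))\in\Image{f_{\Y}^{a}}$, and since $f_{\Y}^{j}(h(\sigma^{-j}(e)))=h(e)=k(e)=f_{\Y}^{j}(k(\sigma^{-j}(e)))$ while $f_{\Y}$ is injective on $\Image{f_{\Y}^{a}}$ (using $\Y\in Q$), we get $h(\sigma^{-j}(e))=k(\sigma^{-j}(e))$. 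In particular $\sigma^{-(a+1)}(e)\in E$, and applying $(f|_E)^{a}$ to it gives $\sigma^{-1}(e)$; hence $\sigma^{-1}(e)\in\Image{(f|_E)^{a}}$ is an $f|_E$-preimage of $e$. So $E\in Q$, and all finite limits are preserved.

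\emph{Main obstacle.} The real work is in the right adjoint, and specifically in the gluing: without it $R\X$ carries redundant depth-$d$ states and the factorization $h=\epsilon_\X\circ\tilde h$ fails to be unique (a map that folds part of a tree of $\Y$ onto the eventual image of $\Y$ acquires several lifts). Making the identifications precise --- equivalently, proving directly that $Q$ is closed under coequalizers in $\DD$ --- is the delicate point; by contrast left exactness is short once one uses that \emph{both} members of the parallel pair lie in $Q$, which is exactly what makes the automorphism $\sigma$ and the injectivity of $f_\Y$ on $\Image{f_\Y^{a}}$ available.
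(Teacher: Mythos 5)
Your proposal is correct in substance, but it takes a genuinely different route from the paper. The paper defers this proposition to Section~\ref{SectionWhere}, where it is obtained as an instance of the classification of essential quotients: the inclusion $Q\hookrightarrow\DD$ is exhibited as precomposition $-\circ\ph$ along the monoid homomorphism $\ph\colon\N\to\Mr{a}{0}$, which is a monoid epimorphism (Example~\ref{ExampleCaseTwoIntegersWithATail}), hence a lax epimorphism (Proposition~\ref{PropositionEquivalenceOfEpiLaxEpi}). From there, everything is automatic from the general theory of presheaf categories: precomposition preserves all limits, always has a right adjoint $\mathrm{Ran}_\ph$, and is fully faithful precisely when $\ph$ is a lax epimorphism; the explicit description of the objects of $Q$ then comes from the presentation of $\Mr{a}{0}$ by generators and relations (Lemma~\ref{LemmaGeneratorandRelations}, Corollary~\ref{CorollaryClasificationOfEssentialQuotients}). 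This buys more than the statement asks for: the quotient topos is identified as $\ps{\Mr{a}{0}}$, and the result slots into the paper's running classification of essential quotients.

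Your proof instead verifies the definition of a quotient by hand: a direct construction of the coreflector $R$, plus a direct check of left exactness. The left-exactness argument is complete and correct — in particular the equalizer step, where $\sigma^{-j}(e)\in E$ for all $j\ge 0$ follows from injectivity of $f_\Y$ on $\Image{f_\Y^a}$, is exactly the kind of computation the paper's route renders unnecessary. Your construction of $R\X$ is in effect computing $\mathrm{Ran}_\ph\X = \DD(\T{a}{0},\X)$ (the set of $\DD$-morphisms from the monoid's associated dynamical system into $\X$), but you build it by adjoining depth-$d$ states for all $1\le d\le a$ and then imposing identifications, which makes the verification of the universal property delicate, as you note. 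A cleaner rendering of the same idea avoids the quotient altogether: take $R\X = \{(\xi,z)\mid \xi\in A_\X,\ z\in X,\ f^a(z)=\xi_0\}$ with $f_{R\X}(\xi,z)=(\mathrm{shift}(\xi),f(z))$ and structure map $(\xi,z)\mapsto z$; every state $(d,\xi,z)$ in your construction corresponds bijectively to $(\mathrm{shift}^{a-d}(\xi),z)$ in this set, and the universal property of the Kan extension is then a short check. With that simplification your argument is fully correct and self-contained; as it stands, the identification step is the one place where the proof is a sketch rather than a proof, which you flag honestly.
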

\begin{proof}
    The proof will be given in Section \ref{SectionWhere}. 
\end{proof}

These quotients are not like those given by Proposition \ref{PropositionConstructionPrimeQuotients}. For example, they are not closed under taking subobjects, but closed under small limits.

\begin{example}[Quotient of bijections]
    In the case of $a=0$, the associated quotients consist of all bijections $f\colon X\to X$. The quotient is equivalent to the topos of $\Z$-actions. This quotient is similar to the quotient of loops (Example \ref{ExampleLoopsProfiniteCompletion}), but contains a non-loop bijection $(\Z, +1)$.

    This quotient is not closed under taking subobjects, since $(\Z,+1)$ belongs to it but $(\N,+1)$ does not.
\end{example}


In Section \ref{SectionMainTheorem}, we will prove the following corollary, which is one of several ways to state the conclusion of this paper.
\begin{corollary}[Classification of Quotients]\label{CorollaryElementaryInAdvance}
    Every class of discrete dynamical systems that is closed under taking finite limits and small colimits defines a quotient of $\DD$ and obtained by one of the following constructions:
    \begin{itemize}
        \item $\DD$ itself
        \item Quotient via Prime numbers (Proposition \ref{PropositionConstructionPrimeQuotients})
        \item Quotient of Eventual bijections (Proposition \ref{PropositionEventualBijConstruction}).
    \end{itemize}
\end{corollary}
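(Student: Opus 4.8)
The plan is to exploit the paper's technical infrastructure — the \emph{generative order} on connected objects for locally connected toposes — to reduce the classification to a combinatorial statement about connected discrete dynamical systems. Since $\DD$ is locally connected and every object is a coproduct of connected ones, a quotient (equivalently, by Corollary~\ref{CorollaryDDprequotientIsQuotient}, a class of discrete dynamical systems closed under finite limits and small colimits) is determined by which \emph{connected} systems it contains, and this collection must be a down-set for the generative order closed under the relevant operations. The first step is therefore to identify the connected discrete dynamical systems and compute the generative order among them: these should be the ``ρ-shaped'' systems (a finite tail of height $a$ feeding into a loop of period $b$), together with the non-periodic connected systems — chiefly $\Nsucc$ and its quotients coming from non-surjective monoid epimorphisms out of $\N$ — whose organizing invariant is the generalized height of Definition~\ref{DefinitiongeneralizedHeight}. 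The main theorem (Theorem~\ref{TheoremMainTheorem}) already asserts the resulting poset of quotients is the ideal lattice of $\rNN = \N \times \cpN$, so the real content is matching that poset against the two explicit constructions.

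Next I would show each of the two constructions lands where expected in the $\rNN$-picture. For a function $\alpha\colon \cP \to \cN$, Proposition~\ref{PropositionConstructionPrimeQuotients} carves out systems whose elements have bounded height (by $\alpha(\infty)$) and whose periods are $p$-adically bounded (by $\alpha(p)$) — this is exactly an ideal of $\rNN$ contained in the ``periodic region,'' i.e.\ one where the first coordinate is finite; the bound on the $\cpN$-coordinate is the supernatural number $\prod_p p^{\alpha(p)}$, and the condition ``period non-zero'' forces this coordinate to be an honest natural number or a genuine supernatural number but never the divisibility-top $0$. For Proposition~\ref{PropositionEventualBijConstruction} with parameter $a$, the systems where $f$ restricts to a bijection on $\Image{f^a}$ are precisely those all of whose connected components have generalized height $\le a$ with \emph{unrestricted} periodic behavior, including the non-periodic bijection $(\Z,+1)$; this corresponds to the principal ideal generated by $(a, 0)$ where $0$ is the divisibility-top of $\cpN$. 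Finally $\DD$ itself is the improper ideal, generated by $(\infty, 0)$. So the three constructions realize: the top ideal; all ideals avoiding the divisibility-top in the second coordinate (equivalently, not containing any $(n,0)$); and all principal ideals $\downarrow(a,0)$ with $a$ finite.

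The crux is then a lattice-theoretic check: every ideal of $\rNN$ is of exactly one of these three shapes. Write an ideal $I \subseteq \N \times \cpN$. If $I$ contains no pair of the form $(n, 0)$, then for each $n$ the slice $\{d : (n,d) \in I\}$ is an ideal of $\cpN$ not containing $0$, hence of the form $\{d : d \mid s_n\}$ for a supernatural number $s_n$, and these supernaturals are non-decreasing in $n$ and eventually constant or increasing to a single supernatural bound — this data is exactly an $\alpha\colon\cP\to\cN$ together with a height bound in $\cN$, so $I$ comes from Proposition~\ref{PropositionConstructionPrimeQuotients}; if on the other hand $(n_0,0)\in I$ for some $n_0$, then since $0$ is the top of $\cpN$ the slice at every $n \le n_0$ (and below the supremum of such $n$) is all of $\cpN$, and letting $a = \sup\{n : (n,0)\in I\}$ one sees $I = \downarrow(a,0)$; this is Proposition~\ref{PropositionEventualBijConstruction} if $a < \infty$ and is $\DD$ if $a = \infty$. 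Conversely each described class is genuinely closed under finite limits and small colimits — for colimits this is automatic since the defining conditions are ``every component looks like ...'', and for finite limits one uses that periods take least common multiples and (generalized) heights take maxima under products, as in the proof of Proposition~\ref{PropositionConstructionPrimeQuotients}, and that both constructions were already shown to be quotients. I expect the main obstacle to be the non-periodic part: verifying that the only connected systems forced into a quotient by $(\Z,+1)$ and $\Nsucc$ are the ones enumerated — i.e.\ that there are no ``exotic'' connected systems of bounded generalized height that escape the Proposition~\ref{PropositionEventualBijConstruction} description — which is precisely where Definition~\ref{DefinitiongeneralizedHeight} and the analysis of monoid epimorphisms from $\N$ (Section~\ref{SectionWhere}, Appendix~\ref{AppendixMonoidEpimorphismsFromN}) do the heavy lifting, and it is the step that genuinely distinguishes this non-boolean topos from the boolean case treated in \cite{hora2023internal}.
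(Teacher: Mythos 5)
Your overall strategy is exactly the paper's: reduce via the chain of embeddings of Proposition~\ref{PropositionChainOfEmbeddings} and the isomorphism $\Gc_{\DD} \cong \NN$ (Proposition~\ref{PropositionConnectedIsomorphic}) to classifying ideals of $\rNN$, then case-split on whether the ideal contains a point of the form $(n,0)$; this is precisely how Theorem~\ref{TheoremMainTheorem} is proved, and the corollary falls out of that proof. Your second case (some $(n_0,0)\in I$, yielding $\downarrow(a,0)$ hence Proposition~\ref{PropositionEventualBijConstruction} or $\DD$ itself) is correct, and your reduction of ``class closed under finite limits and small colimits'' to ``prequotient'' to ``quotient'' via Corollary~\ref{CorollaryDDprequotientIsQuotient} is the right move.

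The gap is in the first case, and it is more than the reversed arrow. The slice supernaturals $s_n$ are non-\emph{increasing} in $n$, not non-decreasing, since $I$ is downward closed and $\{d : (n,d)\in I\} \supseteq \{d : (n',d)\in I\}$ for $n \le n'$. More seriously, merely monotone slices would produce a staircase-shaped region, which is strictly more general than the rectangular regions $\{a \le \alpha(\infty)\}\times\{b : v_p(b)\le\alpha(p)\ \forall p\}$ that Proposition~\ref{PropositionConstructionPrimeQuotients} carves out, so your assertion ``this data is exactly an $\alpha\colon\cP\to\cN$ together with a height bound'' does not follow from what you wrote. The missing observation is that $\N$ and $\cpN$ are join-semilattices with least elements ($0$ and $1$ respectively), hence every ideal of the product $\N\times\cpN$ factors as $I_1\times I_2$ for ideals $I_1\subseteq\N$ and $I_2\subseteq\cpN$ (take $I_1 = \{n : (n,1)\in I\}$ and $I_2 = \{d : (0,d)\in I\}$, and use that the componentwise join of $(n,1)$ and $(0,d)$ is $(n,d)$). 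This forces the slices to be constant on the range where nonempty; $I_1 = \{n : n\le\alpha(\infty)\}$ gives the height bound, and $I_2$, being a $0$-free ideal of $\cpN$, is determined by the exponents $\alpha(p) = \sup\{k : p^k\in I_2\}$. The paper uses the same product decomposition (stated explicitly only in the other case, where it reads $I = \{a : a\le A\}\times\cpN$). With that fact inserted, your argument closes.
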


\section{Generative order}\label{sectionGenerativeOrder}
In this section, we prepare a theoretical framework for classifying all quotients. The idea is simple. Roughly speaking, we will define a preorder on objects of $\E$,
which we will call the \emph{generative order}, 
that is equivalent to the poset of quotients of $\E$ and the inclusion relation (in the sense of Remark \ref{RemarkQuotientToposesAsClassesOfObjects}). 

To deal with some subtleties and make the idea into a rigorous argument, we introduce the notion of a \emph{prequotient}, which mediates those two order structures.


Throughout this section, we utilize some notions of order-preserving functions, especially \emph{embeddings} and \emph{dense functions}. For those who are not familiar with them, related definitions and properties are summarized in Appendix \ref{AppendixSectionDensity}.

\subsection{Prequotients and Generative order}
In this subsection, we will consider three (possibly large) posets:
\begin{description}
    \item[ $\Q_{\E}$ ]the poset of quotients and the inclusion order.
    \item[ $\PQ_{\E}$ ]the poset of \emph{prequotients} and the inclusion order.
    \item[ $\G_{\E}$ ]the poset of objects and the \emph{generative order}.
\end{description}
They have different definitions. But, in many cases, including $\E = \DD$, they all are small and isomorphic to each other. (The authors do not know any examples of Grothendieck toposes where these three posets are not isomorphic.)

Our idea is quite simple. The poset $\Q_{\E}$ is what we want to know, and $\G_{\E}$ is what we can calculate. The poset $\PQ_{\E}$ contains and mediates them, as shown in the following diagram.
\[
\begin{tikzcd}[column sep = 20pt]
    \Q_{\E}\ar[r,hookrightarrow
    ]&\PQ_{\E}\ar[r,hookleftarrow , "\iota_{\E}"
    ]&\G_{\E}
\end{tikzcd}
\]
The above two embeddings tend to be isomorphisms, as discussed in Remark \ref{RemarkQuotientandPrequotient} and Theorem \ref{TheoremFundLemma}.

We begin with the first poset $\Q_{\E}$, which is what we want to know, i.e., quotients.
\begin{notation}
    Let $\Q_{\E}$ denote the (possibly large) poset of all quotients of $\E$, equipped with the inclusion relation (see Remark \ref{RemarkQuotientToposesAsClassesOfObjects}).
\end{notation}
A priori, this poset $\Q_{\E}$ and the following two posets $\PQ_{\E}, \G_{\E}$ are not necessarily small.
However, for many Grothendieck toposes (including all Boolean Grothendieck toposes \cite{hora2023internal}), their sizes are proven to be small. The first problem of Lawvere's open problems in topos theory \cite{OpenLawvere} is asking whether $\Q_{\E}$ is small for an arbitrary Grothendieck topos $\E$.

In order to analyze $\Q_{\E}$, we introduce the second poset $\PQ_{\E}$ of \emph{prequotients}, which contains $\Q_{\E}$.
\begin{definition}[Prequotient]\label{DefinitionPrequotients}
    A \emph{prequotient} of a cocomplete topos $\E$ is a full subcategory of $\E$, which is closed under 
    \begin{itemize}
        \item taking isomorphic objects,
        \item finite limits, and
        \item small colimits.
    \end{itemize}
\end{definition}
As mentioned in Remark \ref{RemarkQuotientToposesAsClassesOfObjects}, we sometimes treat a prequotient as the
corresponding class of objects.

\begin{notation}
    $\PQ_{\E}$ denotes the (possibly large) poset of prequotients of a cocomplete topos $\E$ equipped with the inclusion relation as a partial order.
\end{notation}

Identifying a quotient with the essential image of the inclusion functor, we obtain the following lemma, which states $\Q_{\E}\subset \PQ_{\E}$. This identification does not cause any problem because we do not distinguish two quotients that contain exactly the same isomorphism classes, as explained in Section \ref{subsectionDefQuotients}.
\begin{lemma}\label{LemmaQuotientIsPrequotient}
A quotient $Q$ of a cocomplete topos $\E$ is a prequotient. Therefore, $\Q_{\E}$ is a subposet of $\PQ_{\E}$.
\end{lemma}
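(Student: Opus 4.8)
The plan is to show that any quotient $Q$ of a cocomplete topos $\E$, viewed as a full subcategory closed under isomorphisms, is automatically closed under finite limits and under small colimits, and hence is a prequotient in the sense of Definition \ref{DefinitionPrequotients}. Closure under isomorphic objects is part of our convention for identifying quotients (Remark \ref{RemarkIdentificationOfQuotiens}), so the only real content is the two closure properties.

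First I would handle finite limits. By definition, the inclusion functor $i\colon Q \hookrightarrow \E$ is left exact and has a right adjoint $R$. Since $i$ is fully faithful, the counit $iR \Rightarrow \id_{\E}$ exhibits $Q$ (up to equivalence) as a reflective-type subcategory on the other side; more precisely, fully faithfulness of $i$ means the unit $\id_Q \Rightarrow Ri$ is an isomorphism. Now given a finite diagram $D\colon J \to Q$, form its limit $L = \lim (i D)$ in $\E$. Applying $R$ and using that right adjoints preserve limits, $RL = \lim (R i D) \cong \lim D$ computed in $Q$; but also, because $i$ is left exact and $i$ preserves the limit $L$ is... — the cleaner route is: the limit of $iD$ in $\E$ lies in the essential image of $i$. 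Indeed, $i(\lim_Q D) \cong \lim_\E (iD)$ because $i$ preserves finite limits, so $\lim_\E(iD)$ is (isomorphic to) an object of $Q$. Hence $Q$ is closed under finite limits in $\E$, and the limit computed in $Q$ agrees with the one computed in $\E$.

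Next, small colimits. Here the key point is that $i$, being a left adjoint itself is false — rather $i$ has a right adjoint, so $i$ preserves colimits. Wait: $i$ has a right adjoint $R$, so $i$ is a left adjoint and therefore preserves all colimits that exist. Given a small diagram $D\colon J \to Q$, its colimit in $Q$ can be computed as follows: take $\colim_\E (iD)$, which exists since $\E$ is cocomplete; this need not a priori lie in $Q$, but applying the right adjoint $R$ and then $i$, and using the unit/counit, one checks that $i(\colim_Q RiD) \cong \colim_\E(iD)$... Let me instead argue directly: for any quotient (= coreflective subcategory with left exact inclusion), the inclusion preserves colimits because it is a left adjoint; and a full subcategory whose inclusion preserves colimits and which is coreflective is closed under colimits — given $D\colon J\to Q$, set $c = \colim_\E(iD)$ and let $Rc$ be its coreflection; the universal property of the colimit together with the adjunction $i \dashv R$... actually the standard fact is: a coreflective subcategory of a cocomplete category is cocomplete and the inclusion creates (in particular preserves) colimits. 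So $\colim_\E(iD)$ lies in $Q$. I would cite or reproduce this standard fact.

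The main obstacle, such as it is, is purely bookkeeping: making precise that "the limit/colimit computed in $\E$ actually lands in the essential image of the inclusion", using fully faithfulness to transport between "closed under" and "the inclusion preserves". There is no deep difficulty — the statement is essentially the observation that a left exact coreflective inclusion preserves both finite limits (by hypothesis) and all colimits (being a left adjoint), and a full replete subcategory whose inclusion preserves a class of (co)limits and which is (co)reflective is closed under them. Once these two closures are established, together with closure under isomorphic objects, $Q$ satisfies all three bullet points of Definition \ref{DefinitionPrequotients}, so $Q \in \PQ_\E$; and since the inclusion order on $\Q_\E$ is by definition the restriction of that on $\PQ_\E$ (both being inclusion of the corresponding classes of objects, Remark \ref{RemarkQuotientToposesAsClassesOfObjects}), $\Q_\E$ is a subposet of $\PQ_\E$.
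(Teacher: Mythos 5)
Your proposal is correct and takes essentially the same approach as the paper, which simply cites the standard fact (Riehl, Prop.\ 4.5.15 / Thm.\ 5.6.5) that a coreflective subcategory with left-exact inclusion is closed under finite limits and small colimits; you are unfolding exactly that citation. The only small point worth tightening is that one should first note $Q$ has finite limits at all, via $\lim_Q D = R(\lim_\E(iD))$, before invoking left exactness of $i$ to conclude $\lim_\E(iD) \cong i(\lim_Q D)$ lies in the essential image.
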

\begin{proof}
   This follows from the fact that a quotient is a coreflective subcategory whose inclusion functor preserves finite limits \cite[see][Proposition 4.5.15 or Theorem 5.6.5]{riehl2017category}.
\end{proof}

\begin{remark}[Relation between quotients and prequotients]\label{RemarkQuotientandPrequotient}
    After proving a quotient is a prequotient, it is natural to ask whether the converse statement holds.
    Roughly speaking, the answer is \dq{almost YES.} In fact, at least under V\v{o}penka principle, every prequotient of a Grothendieck topos is a quotient \cite[see][]{adamek1994locally}. For the topos $\DD$, we will observe that they coincide without any additional assumptions (see Corollary \ref{CorollaryDDprequotientIsQuotient}). But for a general case, the authors do not know the answer.
\end{remark}

For what do we consider a prequotient, instead of a quotient? The short answer is that prequotients are easier to deal with than quotients. The definition of prequotient does not involve the existence of an adjoint functor. 
All required conditions are of the form of \dq{being closed by some operations}.

Thanks to this simple definition, 
we can easily prove that, for each object $X\in \E$, there exists the smallest prequotient that contains $X$ (the next lemma).
This property leads us to define the third poset $\G_{\E}$, the \emph{generative order} poset.

\begin{lemma}
    For a cocomplete topos $\E$ and a subclass $A\subset \ob{\E}$, there exists the smallest prequotient that contains $A$.
\end{lemma}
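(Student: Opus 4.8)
The plan is to construct the smallest prequotient containing $A$ as an intersection of all prequotients containing $A$, so the only real work is checking that (i) such prequotients exist (the class we intersect over is non-empty), and (ii) prequotients are closed under arbitrary intersection. First I would observe that $\E$ itself is a prequotient containing $A$: it is trivially closed under isomorphic objects, finite limits, and small colimits (this uses that $\E$ is cocomplete, which is part of the hypothesis). Hence the collection of prequotients containing $A$ is non-empty, and we may form $\bigcap_{i} P_i$ over all such prequotients $P_i$.

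Next I would verify that $P := \bigcap_i P_i$ is again a prequotient. This is a routine diagram-chase: if $X \cong Y$ with $X \in P$, then $X \in P_i$ for every $i$, so $Y \in P_i$ for every $i$ (each $P_i$ closed under isomorphic objects), hence $Y \in P$; and similarly, given a finite diagram (resp. small diagram) valued in $P$, its limit (resp. colimit) computed in $\E$ lies in each $P_i$ because $P_i$ is a full subcategory of $\E$ closed under the relevant operation, and a limit/colimit of a diagram landing in $P_i$ is the same whether computed in $P_i$ or in $\E$ (fullness plus closure); therefore this limit/colimit lies in $P$. Also $A \subset P_i$ for all $i$ gives $A \subset P$. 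So $P$ is a prequotient containing $A$, and by construction it is contained in every prequotient containing $A$, i.e.\ it is the smallest one.

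I do not anticipate a serious obstacle here; the statement is essentially formal. The one point that deserves care is the remark that limits and colimits in a full replete subcategory closed under those operations agree with those computed in the ambient category — but this is standard, and since ``closed under finite limits/small colimits'' for a full subcategory is exactly meant in this sense, there is nothing to prove beyond unwinding definitions. If one wanted to be maximally explicit one could instead describe the smallest prequotient more constructively by a transfinite iteration: start from $A$, close off under isomorphic objects, finite limits, and small colimits, and iterate $\omega_1$-many (or more) times until the process stabilises; but the intersection argument is cleaner and avoids cardinality bookkeeping, so that is the route I would take in the writeup.
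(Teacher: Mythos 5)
Your proof is correct. The paper in fact omits a proof of this lemma, treating it as routine, and the intersection argument you give is exactly the expected one: the whole topos $\E$ is a prequotient containing $A$ (here cocompleteness is used so that small colimits exist), so the collection being intersected is non-empty, and closure conditions of the form ``the (co)limit computed in $\E$ belongs to the subcategory'' are manifestly stable under arbitrary intersection. Your parenthetical remark about (co)limits in a full replete subcategory agreeing with the ambient ones is not strictly needed here, since Definition \ref{DefinitionPrequotients} already phrases closure in terms of the ambient (co)limit, but it does no harm.
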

\invmemo{Proof?}

\begin{notation}
    For any subclass $A\subset \ob{\E}$, let $\gen{A}$ denote the minimum prequotient that contains $A$. 
We abuse this notation and let $\gen{X}$ denotes $\gen{\{X\}}$ for an object $X\in \ob{\E}$.
\end{notation}



\begin{definition}[Generative order]
    For two objects $X, Y$ of a cocomplete topos $\E$, the \textit{generative order} $X \ogeq Y$ is defined to be $\gen{X}\subset \gen{Y}$. As usual, $X\og Y$ means $X \ogeq Y$ and $X \not\goeq Y$. $X \gog Y$ means $X\ogeq Y$ and $X \goeq Y$.
\end{definition}

This relation defines a preorder on the class of all objects of $\E$.

\begin{notation}
        $\G_{\E}$ denotes the (possibly large) poset obtained by quotienting $(\ob{\E}, \ogeq)$ by $\gog$.
\end{notation}

By abuse of notation, when it does not cause confusion, the equivalence class $[X]$ of an object $X$ will be simply denoted by $X$.
Some examples of concrete calculations of $\G_{\E}$ will be given in this section (Example \ref{ExampleGSet}, Example \ref{ExampleGFunction}, and Example \ref{ExampleZAction}).

\begin{example}[Minimum elements]\label{ExampleMinimumElement}
The terminal object $\1$ and the initial object $\0$ of a cocomplete topos give the minimum element $[\1]=[\0] $ of $ \G_{\E}$,
because every prequotient contains them. There are many other representatives of the minimum elements. For example, $[\1 + \1]$ is also the minimum element.
\end{example}

For a subobject $S\rightarrowtail X$, the inequality $S \ogeq X$ may not hold.
However, if $S$ is a member of two special classes of subobjects, namely direct summands and retracts, then it holds.

\begin{lemma}[Direct summands]\label{LemmaCoproduct}
 For two objects $X, Y$, their coproduct $X+Y$ is greater or equivalent to $X$ and $Y$ with respect to the generative order.
 \[X+Y \goeq X\]
\end{lemma}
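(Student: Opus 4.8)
The plan is to show that $\langle X \rangle \subseteq \langle X+Y\rangle$ (and symmetrically $\langle Y\rangle \subseteq \langle X+Y\rangle$), which by definition of the generative order is exactly $X + Y \goeq X$ and $X+Y \goeq Y$. Since $\langle X\rangle$ is the \emph{smallest} prequotient containing $X$, it suffices to exhibit $X$ as an object obtainable from $X+Y$ by the operations that define a prequotient, i.e. to show $X \in \langle X+Y\rangle$; then minimality of $\langle X\rangle$ forces $\langle X\rangle \subseteq \langle X+Y\rangle$.

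So the core task is: recover $X$ from $X+Y$ using finite limits, small colimits, and closure under isomorphism. First I would note that $\langle X+Y\rangle$ contains the terminal object $\1$ and the initial object $\0$ (Example \ref{ExampleMinimumElement}, or directly: $\1$ is an empty limit, $\0$ an empty colimit), hence it contains $\1 + \1$ and more generally all finite copowers of $\1$. The key move is to realize $X$ as a limit (or colimit) diagram built from $X+Y$ and these basic objects. The cleanest route: in a topos, $X$ is a retract of $X+Y$ only in degenerate cases, so instead use that $X$ arises as a pullback. Concretely, the coproduct injection $X \rightarrowtail X+Y$ is a complemented subobject, classified by a map $X + Y \to \1 + \1$; and $X$ is recovered as the pullback of $X+Y \to \1+\1 \leftarrow \1$ along the first coproduct injection of $\1+\1$. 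All three objects $X+Y$, $\1+\1$, $\1$ lie in $\langle X+Y\rangle$, and a pullback is a finite limit, so $X \in \langle X+Y\rangle$. (In the presheaf topos $\DD$ this is completely concrete: $X$ is literally the subset of $X+Y$ picked out by the decomposition, but the topos-theoretic argument is cleaner and works for any cocomplete topos.)

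Having established $X \in \langle X+Y\rangle$ and likewise $Y \in \langle X+Y\rangle$, minimality gives $\langle X\rangle \subseteq \langle X+Y\rangle$ and $\langle Y\rangle \subseteq \langle X+Y\rangle$, which is precisely $X+Y \goeq X$ and $X + Y \goeq Y$, completing the proof.

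The main obstacle — really the only subtle point — is producing $X$ from $X+Y$ by \emph{limits and colimits}, since a direct summand is generally not a retract, so one cannot just split an idempotent. The pullback-against-$\1+\1$ trick resolves this, but it relies on the coproduct injection being a complemented (decidable) subobject, which holds because coproducts in a topos are disjoint and stable; I would state this cleanly, invoking that $\E$ is a topos (so $\1 + \1$ exists and classifies complemented subobjects) rather than belaboring the elementary verification. Everything else is bookkeeping with the definition of $\langle - \rangle$.
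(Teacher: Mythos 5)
Your proof is correct and follows essentially the same route as the paper: both recover $X$ as the pullback of $X+Y \to \1+\1 \leftarrow \1$, relying on extensivity (disjointness and stability of coproducts) of the topos. The only cosmetic difference is that you phrase the conclusion via minimality of $\gen{X}$, while the paper directly shows $X$ lies in any prequotient containing $X+Y$; these are the same argument.
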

\begin{proof}
Take an arbitrary prequotient $Q$ that contains $X+Y$. 
Since there is a pullback diagram
\[
\begin{tikzcd}
X\ar[r]\ar[d]  \arrow[dr, phantom, "\lrcorner", very near start]&\1\ar[d]\\
X+Y\ar[r]&\1 +\1,
\end{tikzcd}
\]
$Q$ contains $X$. Here, we used the fact that any topos is an extensive category.
\end{proof}

\begin{lemma}[Retracts]\label{LemmaRetract}
If an object $R$ is a retract of another object $X$, then $X \goeq R$.
\end{lemma}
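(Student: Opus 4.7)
The plan is to realize $R$ as the splitting of an idempotent on $X$, and then observe that such a splitting arises as a colimit (specifically a coequalizer) of a diagram living entirely inside $\gen{X}$. Concretely, let $s\colon R \to X$ and $r\colon X \to R$ witness the retract, so that $r \circ s = \id_R$. Then $e \coloneqq s \circ r\colon X \to X$ is an idempotent endomorphism, because $e \circ e = s \circ (r \circ s) \circ r = s \circ r = e$.

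Next, I would verify that $R$ (together with the arrow $r\colon X \to R$) is the coequalizer of the parallel pair
\[
\begin{tikzcd}
X \ar[r, shift left, "e"] \ar[r, shift right, "\id_X"'] & X.
\end{tikzcd}
\]
The coequalizing condition $r \circ e = r \circ s \circ r = r = r \circ \id_X$ is immediate. For universality, given any $g\colon X \to Z$ with $g \circ e = g$, the assignment $\tilde g \coloneqq g \circ s\colon R \to Z$ satisfies $\tilde g \circ r = g \circ s \circ r = g \circ e = g$, and uniqueness is forced by the fact that $r$ is a (split) epimorphism.

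Having done this, the conclusion is immediate: since any prequotient $Q$ containing $X$ is closed under small colimits (Definition \ref{DefinitionPrequotients}), it must contain the coequalizer of the two arrows $e, \id_X\colon X \rightrightarrows X$, hence it contains $R$ up to isomorphism. Thus $\gen{R} \subset \gen{X}$, which is the desired $X \goeq R$.

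There is no real obstacle here; the only thing to be careful about is the choice of diagram so that it lies entirely inside $\gen{X}$, which is why I use the coequalizer of the idempotent $e$ with $\id_X$ (both endomorphisms of $X$) rather than, say, trying to present $R$ directly as a colimit of $R$-shaped data. One could equally well exhibit $R$ as the equalizer of the same pair and invoke closure under finite limits, which gives an alternative one-line argument of the same flavor.
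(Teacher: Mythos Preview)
Your proof is correct and rests on the same idea as the paper's: the retract $R$ is the splitting of the idempotent $e = s \circ r$ on $X$, and a splitting can be computed either as the coequalizer or as the equalizer of $e$ and $\id_X$. The paper opts for the equalizer presentation (invoking closure under finite limits), which you yourself mention as an alternative in your final sentence; so the two arguments are dual variants of a single one-line proof.
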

\begin{proof}
Retract is the equalizer of the associated idempotent morphism $e$ on $X$ and $\id_{R}$
\[
\begin{tikzcd}
    R\ar[r,rightarrowtail]&X\ar[r,"e",shift left]\ar[r,"\id_{X}"',shift right]&X.
\end{tikzcd}
\]
\end{proof}


By the definition, there is a natural order-preserving function
\[\iota_{\E}\colon \G_{\E}\hookrightarrow \PQ_{\E}\colon X \mapsto \gen{X}.\]
The next lemma is easy but fundamental.
\begin{lemma}
\label{LemmaDenseEmbeddingIota}
For a cocomplete topos $\E$, $\iota_{\E}\colon \G_{\E}\hookrightarrow \PQ_{\E}$ is a dense embedding (see Appendix \ref{AppendixSectionDensity}).
\end{lemma}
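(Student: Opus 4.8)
The plan is to verify the two defining properties of a dense embedding (Appendix \ref{AppendixSectionDensity}) separately: injectivity with order-reflection (the embedding part), and density (every prequotient is a join of members of the image of $\iota_{\E}$).

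First I would show $\iota_{\E}$ is an embedding of posets, i.e. $\gen{X}\subset\gen{Y}$ holds if and only if $X\ogeq Y$. But this is immediate: the generative order was \emph{defined} by $X\ogeq Y \iff \gen{X}\subset\gen{Y}$, and $\G_{\E}$ is by construction the quotient of $(\ob{\E},\ogeq)$ by $\gog$, so passing to equivalence classes makes $X\mapsto\gen{X}$ well-defined and both order-preserving and order-reflecting; in particular it is injective on equivalence classes. So the only real content is density. For this I would take an arbitrary prequotient $Q\in\PQ_{\E}$ and claim
\[
Q=\bigvee_{X\in Q}\gen{X}
\]
in $\PQ_{\E}$, where the join is computed in the poset of prequotients. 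The inequality $\gen{X}\subset Q$ for each $X\in Q$ holds because $\gen{X}$ is by definition (the notation $\gen{-}$) the \emph{smallest} prequotient containing $X$, and $Q$ is a prequotient containing $X$. Conversely, any prequotient $Q'$ that contains every $\gen{X}$ for $X\in Q$ contains in particular every $X\in Q$ itself (as $X\in\gen{X}$), hence $Q\subset Q'$; thus $Q$ is the least upper bound of $\{\gen{X}:X\in Q\}$. This exhibits every element of $\PQ_{\E}$ as a join of elements in the image of $\iota_{\E}$, which is precisely density.

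The one point requiring a little care — and the step I would flag as the main (mild) obstacle — is that these joins may be large: $Q$ can have a proper class of isomorphism classes of objects a priori, so I should make sure the notion of dense function in Appendix \ref{AppendixSectionDensity} is stated for possibly-large posets and possibly-large joins (which the excerpt's framework, dealing throughout with ``possibly large'' posets $\Q_{\E},\PQ_{\E},\G_{\E}$, is set up to allow), and that the relevant joins actually exist in $\PQ_{\E}$. Existence of arbitrary joins in $\PQ_{\E}$ follows from the fact that an arbitrary intersection of prequotients is again a prequotient (each closure condition — closure under isomorphic objects, finite limits, small colimits — is preserved by intersection), so $\PQ_{\E}$ is a complete (large) lattice; equivalently one can simply note $\bigvee_{X\in Q}\gen{X}=\gen{Q}$ and invoke the lemma guaranteeing that $\gen{A}$ exists for any subclass $A\subset\ob{\E}$. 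With that in place the argument above is complete, and no further calculation is needed.
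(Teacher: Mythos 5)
Your proof is correct, and it relies on the equivalent characterization of density stated just after Definition~\ref{DefinitionDense} (every $q$ is the supremum of $f(y^f(q))$), whereas the paper verifies the definition directly by computing $y^{\iota_{\E}}$. Concretely, the paper observes that $y^{\iota_{\E}}(Q)=Q/\mathord{\gog}$ and that a prequotient is a union of $\gog$-equivalence classes, which immediately makes $Q\mapsto Q/\mathord{\gog}$ injective and order-reflecting — no joins or lattice completeness are invoked at all. You instead exhibit $Q=\bigvee_{X\in Q}\gen{X}$ in $\PQ_{\E}$, which requires first noting that $\PQ_{\E}$ is closed under intersections so that the join exists. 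Both routes turn on the same key fact — a prequotient is determined by the objects it contains — but the paper's version is a touch leaner since it does not require establishing that $\PQ_{\E}$ has arbitrary joins. One small precision worth adding to your write-up: ``every element is a join of elements in the image'' should be read as ``$Q$ is the \emph{supremum} (not merely an upper bound) of $\iota_{\E}(y^{\iota_{\E}}(Q))$''; you do in fact verify the least-upper-bound property with your converse step, and the family you use coincides with $\iota_{\E}(y^{\iota_{\E}}(Q))$ because $\gen{X}\subset Q$ if and only if $X\in Q$, so the argument is sound once phrased carefully.
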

\begin{proof}
The related definitions immediately imply that $\iota_{\E}$ is an embedding. We prove $\iota_{\E}$ is dense.
The function in Definition \ref{DefinitionDense}
$y^{\iota_{\E}}\colon \PQ_{\E}\to \Dw{\G_{\E}}$, where $\Dw{\G_{\E}}$ denotes the poset of the downward closed subsets,
sends a prequotient $Q$ to \[y^{\iota_{\E}} (Q) = \{X\in \G_{\E} \mid \gen{X} \subset Q\} = \{X\in \G_{\E} \mid X \in Q\} = Q/\mathord{\gog} \subset \G_{\E}.\] 
Because a prequotient is a union of equivalence classes of $\mathord{\gog}$, this proves $y^{\iota_{\E}}$ is an embedding and $\iota_{\E}$ is dense.

\end{proof}

\begin{proposition}
    For a cocomplete topos $\E$, $\PQ_{\E}$ is small if and only if $\G_{\E}$ is small.
\end{proposition}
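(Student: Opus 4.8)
The plan is to deduce this proposition directly from Lemma \ref{LemmaDenseEmbeddingIota}, which tells us that $\iota_{\E}\colon \G_{\E}\hookrightarrow \PQ_{\E}$ is a dense embedding. The key observation is that a dense embedding of posets is, up to the equivalence of posets, a \dq{cofinal-like} inclusion that controls size in both directions. First I would recall from Appendix \ref{AppendixSectionDensity} what a dense embedding gives us: the comparison map $y^{\iota_{\E}}\colon \PQ_{\E}\to \Dw{\G_{\E}}$ is itself an embedding, and moreover (from the explicit description in the proof of Lemma \ref{LemmaDenseEmbeddingIota}) its image consists exactly of the downward-closed subsets of $\G_{\E}$ that arise as $Q/\mathord{\gog}$ for a prequotient $Q$; conversely $\iota_{\E}$ exhibits every element of $\G_{\E}$ as an element of $\PQ_{\E}$.

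The easy direction is that if $\PQ_{\E}$ is small then $\G_{\E}$ is small: since $\iota_{\E}$ is an embedding (in particular injective on objects and fully faithful), $\G_{\E}$ is isomorphic to a subposet of $\PQ_{\E}$, hence has at most as many isomorphism classes, so it is small. For the converse, suppose $\G_{\E}$ is small. Then $\Dw{\G_{\E}}$, the poset of downward-closed subsets of a small poset, is a (small) set — it is a subset of the power set of the underlying set of $\G_{\E}$. The map $y^{\iota_{\E}}\colon \PQ_{\E}\to \Dw{\G_{\E}}$ is an embedding of posets (this is part of the content of density, spelled out in Lemma \ref{LemmaDenseEmbeddingIota}), so $\PQ_{\E}$ is isomorphic to a subposet of the small poset $\Dw{\G_{\E}}$, and is therefore small.

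Concretely, I would phrase the argument as: $\iota_{\E}$ being an embedding gives $\lvert \G_{\E}\rvert \le \lvert \PQ_{\E}\rvert$, and $y^{\iota_{\E}}$ being an embedding gives $\lvert \PQ_{\E}\rvert \le \lvert \Dw{\G_{\E}}\rvert \le 2^{\lvert \G_{\E}\rvert}$; the two inequalities together show that one of $\G_{\E},\PQ_{\E}$ is small precisely when the other is. The one point that needs a little care — and is really the only obstacle — is the size bookkeeping: a priori $\PQ_{\E}$ and $\G_{\E}$ are only \emph{possibly large} posets (classes), so \dq{small} here should be read as \dq{equivalent to a small poset,} and I must make sure the embeddings genuinely transport smallness, i.e. that a subposet (in the sense of a full, injective-on-objects order-embedding) of a small poset is small, and that $\Dw{-}$ of a small poset is small. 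Both of these are standard set-theoretic facts once one fixes the convention that a poset is small iff it has a small set of isomorphism classes of objects (and here, being antisymmetric, iff it has a small underlying set), so the proof is short.
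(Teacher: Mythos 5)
Your argument is correct and is essentially the paper's proof: the paper simply cites Lemma \ref{LemmaDenseEmbeddingIota} together with Proposition \ref{PropositionSmallnessComparison}, and your two inequalities $\lvert\G_{\E}\rvert\le\lvert\PQ_{\E}\rvert$ (from $\iota_{\E}$ being an embedding) and $\lvert\PQ_{\E}\rvert\le 2^{\lvert\G_{\E}\rvert}$ (from density) are precisely the content of that proposition, which you have unpacked rather than cited.
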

\begin{proof}
    This immediately follows from Lemma \ref{LemmaDenseEmbeddingIota} and Proposition \ref{PropositionSmallnessComparison}.
\end{proof}
Since Lawvere's open problem is asking about the smallness of the class of quotients $\Q_{\E}\subset \PQ_{\E}$, it is enough to prove that $\G_{\E}$ is small.

The next proposition states that this function $\iota_{\E}$ is not only dense but cocontinuous (i.e., preserves all small supremums).
\begin{proposition}\label{PropositionFullCoprod}
    Let $\E$ be a cocomplete topos and $\{X_{\lambda}\}_{\lambda \in \Lambda}$ be a small family of objects.
    \begin{itemize}
        \item In $\PQ_{\E}$, the supremum of $\{\gen{X_{\lambda}}\}_{\lambda \in \Lambda}$ is given by $\gen{\{X_{\lambda}\}_{\lambda \in \Lambda}} = \gen{\coprod_{\lambda \in \Lambda} X_{\lambda}} $.
        \item In $\G_{\E}$, the supremum  of $\{X_{\lambda}\}_{\lambda \in \Lambda}$ is given by $\coprod_{\lambda \in \Lambda} X_{\lambda}$.
    \end{itemize}
\end{proposition}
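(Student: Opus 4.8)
The plan is to prove the two statements in parallel, using that $\iota_{\E}$ is an embedding (Lemma \ref{LemmaDenseEmbeddingIota}) to transfer the supremum computation in $\G_{\E}$ to one in $\PQ_{\E}$. Concretely, I would first establish the $\PQ_{\E}$ statement, namely that $\gen{\{X_{\lambda}\}_{\lambda\in\Lambda}}$ is the supremum of $\{\gen{X_{\lambda}}\}_{\lambda\in\Lambda}$ in $\PQ_{\E}$, and that this prequotient equals $\gen{\coprod_{\lambda}X_{\lambda}}$. The first equality is essentially unwinding the definition of $\gen{-}$: a prequotient $Q$ contains every $X_{\lambda}$ iff $\gen{X_{\lambda}}\subset Q$ for all $\lambda$, and by minimality of $\gen{\{X_{\lambda}\}}$ this happens iff $\gen{\{X_{\lambda}\}}\subset Q$; hence $\gen{\{X_{\lambda}\}}$ is the least upper bound of the $\gen{X_{\lambda}}$'s in the inclusion order. (Note this requires knowing such a supremum exists, which it does because $\PQ_{\E}$ has all infima — arbitrary intersections of prequotients are prequotients — hence all suprema, or more directly because $\gen{\{X_{\lambda}\}}$ itself witnesses it.)

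Next I would prove $\gen{\{X_{\lambda}\}_{\lambda\in\Lambda}} = \gen{\coprod_{\lambda\in\Lambda}X_{\lambda}}$. For $\subset$: any prequotient containing $\coprod_{\lambda}X_{\lambda}$ contains each $X_{\lambda}$ as a direct summand (Lemma \ref{LemmaCoproduct}, or directly: each $X_{\lambda}$ is a pullback of $\1 \rightarrowtail \coprod_{\mu}\1$ along $\coprod_{\mu}X_{\mu}\to\coprod_{\mu}\1$ by extensivity), so $\gen{\coprod_{\lambda}X_{\lambda}}$ contains every $X_{\lambda}$, whence $\gen{\{X_{\lambda}\}}\subset\gen{\coprod_{\lambda}X_{\lambda}}$. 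For $\supset$: the prequotient $\gen{\{X_{\lambda}\}}$ contains every $X_{\lambda}$ and is closed under small colimits, in particular under small coproducts, so it contains $\coprod_{\lambda}X_{\lambda}$; hence $\gen{\coprod_{\lambda}X_{\lambda}}\subset\gen{\{X_{\lambda}\}}$. This settles the $\PQ_{\E}$ bullet.

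Finally, for the $\G_{\E}$ bullet: since $\iota_{\E}\colon\G_{\E}\hookrightarrow\PQ_{\E}$, $X\mapsto\gen{X}$, is an order embedding (Lemma \ref{LemmaDenseEmbeddingIota}) and embeddings reflect suprema, it suffices to check that $\iota_{\E}(\coprod_{\lambda}X_{\lambda}) = \gen{\coprod_{\lambda}X_{\lambda}}$ is the supremum of $\{\iota_{\E}(X_{\lambda})\}_{\lambda} = \{\gen{X_{\lambda}}\}_{\lambda}$ in $\PQ_{\E}$ — which is exactly what the first bullet gives, since $\gen{\coprod_{\lambda}X_{\lambda}} = \gen{\{X_{\lambda}\}}$ is that supremum. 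Thus $\coprod_{\lambda}X_{\lambda}$ is the supremum of the $X_{\lambda}$ in $\G_{\E}$. I do not anticipate a serious obstacle here; the only mild subtlety is the bookkeeping point that an order embedding reflects exactly those suprema whose image-set admits a supremum in the target and whose candidate lies in the image — and here the candidate $\gen{\coprod_{\lambda}X_{\lambda}}$ visibly lies in the image of $\iota_{\E}$, so the argument goes through cleanly.
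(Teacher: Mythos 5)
Your argument is correct and matches the paper's proof essentially step for step: unwind the definition of $\gen{-}$ to show $\gen{\{X_{\lambda}\}}$ is the supremum, prove $\gen{\{X_{\lambda}\}}=\gen{\coprod_{\lambda}X_{\lambda}}$ via closure under coproducts and Lemma~\ref{LemmaCoproduct}, then transfer to $\G_{\E}$ via the embedding $\iota_{\E}$ and Lemma~\ref{LemmaEmbeddingSupremum}. The only cosmetic difference is that the paper makes explicit the splitting $\coprod_{\lambda}X_{\lambda}=X_{\lambda_0}+\coprod_{\lambda\neq\lambda_0}X_{\lambda}$ before invoking Lemma~\ref{LemmaCoproduct}, whereas you appeal to that lemma directly.
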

\begin{proof}
    By the related definitions, it is easy to prove that $\gen{\{X_{\lambda}\}_{\lambda \in \Lambda}}$ is the supremum of $\{\gen{X_{\lambda}}\}_{\lambda \in \Lambda}$. We prove the equality $\gen{\{X_{\lambda}\}_{\lambda \in \Lambda}} = \gen{\coprod_{\lambda \in \Lambda} X_{\lambda}} $. Since a prequotient is closed under taking small corpuducts, we have \[\gen{\coprod_{\lambda \in \Lambda} X_{\lambda}} \subset 
        \gen{\{X_{\lambda}\}_{\lambda \in \Lambda}}.\] For the opposite direction, we can utilize Lemma \ref{LemmaCoproduct} and the equation 
        \[
        \coprod_{\lambda} X_{\lambda} = X_{\lambda_0} + \coprod_{\lambda \neq \lambda_0} X_{\lambda}.
        \]

        The latter statement follows from the former statement and the fact that $\iota_{\E}$ is an embedding (see Lemma \ref{LemmaEmbeddingSupremum}).
\end{proof}
\invmemo{Embedding and order-embedding}

In category-theoretic terms, the above proposition states that the possibly large poset $\G_{\E}$ is (small) cocomplete, the colimit (supremum) is given by the coproduct, and $\iota_{\E}$ is cocontinuous. 

\begin{example}[Maximum elements]\label{ExampleMaximumElement}
For a Grothendieck topos $\E$, there is the maximum element of $\G_{\E}$. Take a small site $(\C, J)$. Then, the coproduct of the sheafifications of representable presheaves
\[
\coprod_{c\in \ob{\C}} \mathbf{ay}(c)
\]
is (a representative of) the maximum element, since every $J$-sheaf is a colimit of a diagram consisting of $\mathbf{ay}(c)$. For example,
$\Nsucc$ is the maximum element of $\G_{\DD}$.
\end{example}

Furthermore, for many cases, there is a much more direct relationship between $\G_{\E}$ and $\PQ_{\E}$: they are isomorphic! We call it the fundamental lemma of generative order.
\begin{theorem}[Fundamental Lemma]\label{TheoremFundLemma}
If $\G_{\E}$ (or equivalently $\PQ_{\E}$) is small, then $\iota_{\E}\colon \G_{\E}\to \PQ_{\E}$ is an order isomorphim.
\end{theorem}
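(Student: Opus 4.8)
The plan is to show that the dense embedding $\iota_{\E}\colon\G_{\E}\hookrightarrow\PQ_{\E}$ from Lemma \ref{LemmaDenseEmbeddingIota} is surjective on objects once we know the codomain is small; combined with the fact that a surjective embedding of posets is an isomorphism, this gives the claim. Concretely, I would argue that every prequotient $Q\in\PQ_{\E}$ is of the form $\gen{X}$ for a single object $X$. The natural candidate is a coproduct of representatives, one from each isomorphism class of objects lying in $Q$; the point is that this coproduct exists precisely because smallness of $\G_{\E}$ guarantees only a set's worth of $\gog$-classes, hence we are taking a small coproduct, which $\E$ (being cocomplete) has.

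The key steps, in order, are as follows. First, recall from Proposition \ref{PropositionFullCoprod} that for any small family $\{X_\lambda\}_{\lambda\in\Lambda}$ of objects one has $\gen{\{X_\lambda\}_{\lambda\in\Lambda}}=\gen{\coprod_\lambda X_\lambda}$, so a coprobetter of objects of $Q$ generates the same prequotient as the set of those objects. Second, fix a prequotient $Q$; since $\G_{\E}$ is small, the collection of $\gog$-equivalence classes of objects contained in $Q$ is a (small) set, so choose a representative $X_\lambda$ from each such class, indexed by a small set $\Lambda$, and set $X\coloneqq\coprod_{\lambda\in\Lambda}X_\lambda$. Third, verify $\gen{X}=Q$: the inclusion $\gen{X}\subset Q$ holds because $Q$ is a prequotient containing every $X_\lambda$ hence their coproduct and everything it generates; the reverse inclusion $Q\subset\gen{X}$ holds because any object $Y\in Q$ is $\gog$-equivalent to some chosen $X_\lambda$ (as $Q$ is a union of $\gog$-classes, by the same observation used in the proof of Lemma \ref{LemmaDenseEmbeddingIota}), whence $Y\in\gen{X_\lambda}\subset\gen{X}$. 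Fourth, conclude: this shows $\iota_{\E}$ is surjective on objects; since it is already an embedding (order-reflecting, order-preserving, injective), it is an order isomorphism — one may also invoke Proposition \ref{PropositionSmallnessComparison}-style bookkeeping from the density appendix, but the direct argument suffices.

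I expect the only real subtlety — not quite an obstacle — to be making precise that a prequotient is genuinely a union of $\gog$-classes, i.e.\ that $Y\gog X_\lambda$ and $X_\lambda\in Q$ force $Y\in Q$. This is immediate from the definition of the generative order: $Y\gog X_\lambda$ means $\gen{Y}=\gen{X_\lambda}$, and $X_\lambda\in Q$ gives $\gen{X_\lambda}\subset Q$ since $Q$ is a prequotient containing $X_\lambda$, hence $Y\in\gen{Y}\subset Q$. The other place to be slightly careful is that the index set $\Lambda$ is legitimately small: this is exactly the hypothesis that $\G_{\E}$ is small, and the parenthetical equivalence "(or equivalently $\PQ_{\E}$)" in the statement is justified by the Proposition preceding the theorem, so there is no circularity. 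Everything else is a formal consequence of cocompleteness of $\E$ and the already-established density of $\iota_{\E}$.
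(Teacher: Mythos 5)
Your proposal is correct and follows essentially the same route as the paper: both reduce to proving surjectivity of the dense embedding $\iota_{\E}$, then use smallness to form a generator $X$ for a given prequotient $Q$ — the paper phrases this as observing that $Q/\mathord{\gog}\subset\G_{\E}$ is a small, sup-closed, downward-closed subset and hence has a maximum element, while you construct the same $X$ explicitly as the coproduct of one representative per $\gog$-class and verify $\gen{X}=Q$ directly; these are the same argument. (Minor typo only: ``coprobetter'' should read ``coproduct''.)
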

\begin{proof}
    Since we have proven that $\iota_{\E}$ is an embedding (Lemma \ref{LemmaDenseEmbeddingIota}), it is enough to prove that $\iota_{\E}$ is surjective. Take an arbitrary prequotient $Q \in \PQ_{\E}$. Then $Q/\mathord{\gog} (= y^{\iota_{\E}}(Q))$ is a downward closed subset of $\G_{\E}$. 
    Furthermore, it is closed by taking arbitrary small supremums by Proposition \ref{PropositionFullCoprod}, 
    since a prequotient is closed under taking small coproducts.

    Therefore, since $Q/\mathord{\gog}  \subset \G_{\E}$ is small and closed under taking small supremums, it has the maximum element $X$ and is generated by that object, i.e., $Q = \gen{X}$. Then $Q=\iota_{\E} (X)$ and the proof is completed.
\end{proof}



\begin{example}\label{ExampleGSet}
    As the simplest examples for classifying all quotients, we rephrase Example \ref{ExampleQuotientSets} in terms of the generative order.
    A singleton is the minimum and the maximum element in $\G_{\Set}$ (see Example \ref{ExampleMinimumElement} and Example \ref{ExampleMaximumElement}).
    Therefore $\G_{\Set}$ is a one-element poset and in particular small. Therefore, by Theorem \ref{TheoremFundLemma}, $\PQ_{\Set}$ is also a one-element poset and the only prequotient of $\Set$ is $\Set$ itself. 
\end{example}



\subsection{For locally connected toposes}\label{subsectionLocallyConnectedTopos}
In the last section, we have observed that $\Q_{\E}$ can be analyzed via $\G_{\E}$. In this subsection, we present a way to analyze $\G_{\E}$, assuming $\E$ is a \emph{locally connected topos}.

First, we recall the definition of connected objects and locally connected topos.

\begin{definition}
    An object $X$ of a Grothendieck topos $\E$ is \textit{connected} if it satisfies the following equivalent conditions:
    \begin{itemize}
        \item The hom functor $\E(X,-)\colon \E \to \Set$ preserves small coproducts.
        \item The hom functor $\E(X,-)\colon \E \to \Set$ preserves finite coproducts.
        \item $X$ is not an initial object, and if $X$ is a coproduct of two objects $X=S \coprod T$, then either $S$ or $T$ is an initial object.
    \end{itemize}
\end{definition}
\invmemo{Proof of the equivalence}

In the case where $\E = \DD$, this definition is equivalent to the connectedness of a discrete dynamical system (see Definition \ref{DefinitionConnectedDD}).


\begin{definition}
    A Grothendieck topos is called \textit{locally connected} if every object is a coproduct of connected objects.
\end{definition}

\begin{example}[Presheaf category]
    Every presheaf category over a small category is a locally connected Grothendeick topos. In particular, $\DD$ is locally connected.
\end{example}

\begin{example}[Sheaf over a topological space]
    For a topological space $X$, its sheaf topos $\mathrm{Sh}(X)$ is locally connected if and only if $X$ is locally connected as a topological space.
\end{example}
\begin{notation}
    For a cocomplete topos $\E$, $\Gc_{\E}$ denotes the subposet of $\G_{\E}$ that consists of connected objects.
\end{notation}
In more detail, an element of $\Gc_{\E}$ is an equivalence class of $\ob{\E}$ that contains at least one connected object.
    

\begin{lemma}\label{LemmaConnectedDense}
    For a locally connected Grothendieck topos $\E$,
    the embedding 
    $\Gc_{\E} \to \G_{\E}$ is dense. 
\end{lemma}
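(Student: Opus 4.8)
The plan is to unwind the definition of density from Appendix~\ref{AppendixSectionDensity}, which (as used in Lemma~\ref{LemmaDenseEmbeddingIota}) says that an embedding $j\colon P\hookrightarrow P'$ is dense precisely when the induced map $y^{j}\colon P'\to \Dw{P}$, sending $q\mapsto \{p\in P\mid j(p)\le q\}$, is itself an embedding — equivalently, when every element of $P'$ is the supremum of the elements of $P$ below it, and this supremum-representation is faithful. So for the embedding $\Gc_{\E}\hookrightarrow \G_{\E}$ I need to show: (i) every object $X\in\E$, viewed in $\G_{\E}$, is the join in $\G_{\E}$ of the connected objects $C$ with $C\ogeq$-below $X$; and (ii) this assignment is order-reflecting, i.e.\ if every connected object below $X$ is also below $Y$ then $X\ogeq Y$. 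Both will follow from local connectedness together with Proposition~\ref{PropositionFullCoprod}.

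First I would use local connectedness: write $X=\coprod_{\lambda}C_{\lambda}$ with each $C_{\lambda}$ connected. By Lemma~\ref{LemmaCoproduct} each $C_{\lambda}\ogeq X$ — wait, the inequality goes $X\goeq C_\lambda$, i.e.\ $C_\lambda\ogeq X$ in the $\ogeq$ notation is the wrong direction; let me be careful: Lemma~\ref{LemmaCoproduct} gives $X = \coprod C_\lambda \goeq C_\lambda$, so each $C_\lambda$ is $\ogeq$-below $X$, hence each $C_\lambda$ is one of the connected objects under consideration. By Proposition~\ref{PropositionFullCoprod}, the supremum in $\G_{\E}$ of the family $\{C_{\lambda}\}$ is exactly $\coprod_{\lambda}C_{\lambda}=X$. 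Therefore $X$ is already the join of \emph{some} family of connected objects lying below it, so a fortiori $X$ is the join of \emph{all} connected objects below it (the join of a larger bounded family below $X$ can be no larger than $X$, and no smaller than the join of the subfamily $\{C_\lambda\}$, which is $X$). This gives surjectivity-onto-joins, i.e.\ condition (i), and simultaneously (ii): if $Y\in\G_{\E}$ dominates every connected object below $X$, then in particular $Y\goeq C_{\lambda}$ for all $\lambda$, so $Y$ dominates their supremum $X$, i.e.\ $X\ogeq Y$; hence $y^{\,\iota}$ reflects order and is an embedding.

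Concretely I would phrase the argument as: the comparison map $y\colon \G_{\E}\to\Dw{\Gc_{\E}}$ sends $X$ to $\{C\in\Gc_{\E}\mid C\goeq$-below $X\}$, and I claim this is an order-embedding. It is clearly order-preserving. For order-reflection, suppose $y(X)\subseteq y(Y)$. Choosing a decomposition $X=\coprod_\lambda C_\lambda$ into connected objects, each $[C_\lambda]\in y(X)\subseteq y(Y)$, so $Y\goeq C_\lambda$ for each $\lambda$; since $X=\coprod_\lambda C_\lambda$ is the supremum of the $C_\lambda$ in $\G_{\E}$ by Proposition~\ref{PropositionFullCoprod}, we get $Y\goeq X$, i.e.\ $X\ogeq Y$. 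This is exactly the condition that $\Gc_{\E}\hookrightarrow\G_{\E}$ is dense in the sense of Definition~\ref{DefinitionDense}.

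The only genuinely delicate point — the ``main obstacle'' — is bookkeeping the direction of the order: $X\ogeq Y$ means $\gen{X}\subseteq\gen{Y}$, so ``$C$ below $X$'' should mean $\gen{C}\subseteq\gen{X}$, i.e.\ $C\ogeq X$ in symbols but ``$C$ is generated by $X$''; meanwhile Lemma~\ref{LemmaCoproduct} tells us the summands of $X$ are generated by $X$, which is precisely what we want, and Proposition~\ref{PropositionFullCoprod} tells us $X$ is the least upper bound of those summands. Once the variance is pinned down, nothing else is hard: the proof is essentially ``local connectedness $+$ Proposition~\ref{PropositionFullCoprod} $+$ the definition of dense.'' I would also remark that no smallness hypothesis on $\G_{\E}$ is needed here, in contrast to Theorem~\ref{TheoremFundLemma}.
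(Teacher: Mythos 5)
Your proof is correct and coincides essentially with the paper's: decompose $X$ into connected components using local connectedness, observe via Lemma \ref{LemmaCoproduct} that each component lies in $y^{\iota}(X)$, and conclude by Proposition \ref{PropositionFullCoprod} that $X$ is the supremum of those components, forcing $X \ogeq Y$ whenever $y^{\iota}(X)\subset y^{\iota}(Y)$. The momentary confusion about the direction of $\ogeq$ resolves itself (you end up where you started, since $X\goeq C_\lambda$ and $C_\lambda\ogeq X$ are the same statement), and the extra verification that $X$ is the join of \emph{all} connected objects below it is harmless but not needed — the order-reflection property already follows from the subfamily $\{C_\lambda\}$, which is exactly how the paper argues.
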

\begin{proof}
    According to Definition \ref{DefinitionDense}, what we need to prove is that the function
    \[
    y^{\iota}\colon \G_{\E}\to \Dw{\Gc_{\E}}
    \]
    induced by the embedding $\iota\colon \Gc_{\E}\to \G_{\E}$ is an embedding.
    Since it is automatically order-preserving, it is enough to prove \[y^{\iota}(X)\subset y^{\iota}(Y)
    \implies X\ogeq Y
    .\]
    
    Since $\E$ is locally connected, $X$ can be decomposed as
    \[X \cong \coprod_{\lambda\in \Lambda} X_{\lambda}\]
    with a small family of connected objects $\{X_{\lambda}\}_{\lambda\in \Lambda}$. 
    By Lemma \ref{LemmaCoproduct}, we have $\{X_\lambda\}_{\lambda\in \Lambda} \subset y^{\iota}(X)\subset y^{\iota}(Y)$, i.e., $X_\lambda \ogeq Y$ for any $\lambda \in \Lambda$.
    By Proposition \ref{PropositionFullCoprod}, which states $\coprod_{\lambda\in \Lambda} X_{\lambda}$ is the supremum of $\{X_\lambda\}_{\lambda\in \Lambda}$ in $\G_{\E}$, we obtain $X\ogeq Y$.
\end{proof}

Finally, combining everything in this section, we can embed $\Q_{\E}$ into $\Dw{\Gc_{\E}}$. It means that classification of quotients is reduced to the calculation of connected objects.

\begin{proposition}\label{PropositionChainOfEmbeddings}
For a locally connected Grothendieck topos $\E$,
    if $\Gc_{\E}$ is small, we have a chain of order-embeddings
\[\Q_{\E}\hookrightarrow\PQ_{\E}\cong \G_{\E} \hookrightarrow \Dw{\Gc_{\E}}.\]
\end{proposition}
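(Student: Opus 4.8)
The proof is an assembly of results already proved in this section, with the smallness hypothesis on $\Gc_{\E}$ serving only to unlock the one step that needs a size condition. The plan is: (i) obtain the leftmost embedding from Lemma \ref{LemmaQuotientIsPrequotient}; (ii) use the density of $\Gc_{\E}\hookrightarrow\G_{\E}$ to produce the rightmost embedding and, at the same time, to deduce that $\G_{\E}$ is small; (iii) feed that smallness into the Fundamental Lemma to get the middle isomorphism, and then compose.

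For (i), Lemma \ref{LemmaQuotientIsPrequotient} says every quotient of $\E$ is a prequotient, and both $\Q_{\E}$ and $\PQ_{\E}$ carry the inclusion order, so the inclusion $\Q_{\E}\hookrightarrow\PQ_{\E}$ is an order-embedding with nothing further to check.

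For (ii), Lemma \ref{LemmaConnectedDense} tells us that the inclusion $\iota\colon\Gc_{\E}\to\G_{\E}$ is dense, which by Definition \ref{DefinitionDense} means precisely that the induced map $y^{\iota}\colon\G_{\E}\to\Dw{\Gc_{\E}}$, sending an object to the downward-closed set of connected objects below it, is an order-embedding; this is the rightmost arrow of the chain. Now I would invoke the hypothesis that $\Gc_{\E}$ is small: then $\Dw{\Gc_{\E}}$ is small, being a sub-poset of the power set of a small set, and since $\G_{\E}$ order-embeds into it, $\G_{\E}$ is small as well, by Proposition \ref{PropositionSmallnessComparison}.

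For (iii), with $\G_{\E}$ now known to be small (equivalently $\PQ_{\E}$ small, by Lemma \ref{LemmaDenseEmbeddingIota}), the Fundamental Lemma, Theorem \ref{TheoremFundLemma}, applies and gives that $\iota_{\E}\colon\G_{\E}\to\PQ_{\E}$ is an order isomorphism, i.e.\ the middle $\cong$. Composing the three maps yields $\Q_{\E}\hookrightarrow\PQ_{\E}\cong\G_{\E}\hookrightarrow\Dw{\Gc_{\E}}$, as claimed. The only step that does real work is the smallness propagation in (ii): once one notices that the density of the connected-objects inclusion is exactly what converts the hypothesis ``$\Gc_{\E}$ small'' into ``$\G_{\E}$ small'', everything else is a direct citation, so I do not expect any genuine obstacle beyond keeping the size bookkeeping straight.
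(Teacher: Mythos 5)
Your proof is correct and follows essentially the same route as the paper's: the rightmost embedding comes from the density of $\Gc_{\E}\hookrightarrow\G_{\E}$ (Lemma \ref{LemmaConnectedDense}), smallness of $\G_{\E}$ is propagated from $\Gc_{\E}$ via Proposition \ref{PropositionSmallnessComparison}, the middle isomorphism is then Theorem \ref{TheoremFundLemma}, and the leftmost embedding is Lemma \ref{LemmaQuotientIsPrequotient}. The only cosmetic difference is that you re-derive the smallness propagation by hand (embedding into a power set) where the paper just cites Proposition \ref{PropositionSmallnessComparison}, but the underlying argument is identical.
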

\begin{proof}
    The right-most embedding is due to Lemma \ref{LemmaConnectedDense}. By Lemma \ref{PropositionSmallnessComparison}, $\G_{\E}$ is small. Then, we can apply Theorem \ref{TheoremFundLemma} to obtain the middle isomorphism.
\end{proof}

Although the explanation has been long, the above embedding just sends a quotient $Q$ to the set of all connected objects that $Q$ contains.


\begin{example}[Topos of functions]\label{ExampleGFunction}
    As a somewhat non-trivial example, we classify all quotients of the topos of functions $\E \coloneqq \Set^{\to}$. A function $f\colon A\to B$ (which is an object of this category) is connected if and only if the codomain $B$ is a singleton. For each cardinal $\kappa$, let $f_\kappa$ denote the unique function $\kappa\to \{\ast\}$. Since $f_1$ is the terminal object, it is the minimum element in $\G_{\E}$ (Example \ref{ExampleMinimumElement}) and so is in $\Gc_{\E}$.

    We prove that all other connected objects are equivalent to each other, with respect to $\ogeq$.
    First, $f_0$ is the maximum element, because $f_0 \goeq \sup\{f_0, f_1\}=f_0 \coprod f_1$ (Proposition \ref{PropositionFullCoprod}), and $f_0 \coprod f_1$ is the maximum element (Example \ref{ExampleMaximumElement}). 
    For $\kappa \geq 2$, by considering an endofunction $g:\kappa\to\kappa$ without a fixed point,
    and equalizer of $g$ (at the domain part) and the identity, 
    we obtain $f_{\kappa} \goeq f_{0}$, hence $f_{\kappa} \gog f_{0}$. 
    
    Now we have proven that $\Gc_{\E}$ has at most two elements. Since the full subcategory of $\E$ that consists of all bijections is a quotient containing $f_1$ but not $f_0$, we have $f_0 \go f_1$. Now we have proven that $\Gc_{\E}$ is the $2$-element totally ordered set.

    Therefore, by Proposition \ref{PropositionChainOfEmbeddings}, $\PQ_{\E}\cong \G_{\E}$ is embeddable into the $3$-elements totally ordered set $\Dw{\Gc_{\E}}$. Since quotients are non-empty, we conclude that $\E$ itself and the quotient consisting of all bijections are the only quotients of the topos of functions $\E$.
    (The quotient of all bijections is equivalent to the topos of sets $\Set$, and this quotient witnesses the connectedness of the topos of functions due to Example \ref{ExampleQuotientAsConnectedness}.)
\end{example}

As mentioned in Example \ref{ExampleTopologicalGroupQuotient}, every quotient of a group action topos is known to be induced by a topological group structure on the group. However, it is worth explaining the classification of quotients of $\ps{\Z}$ in our framework,
because it is similar to the classification of quotients of $\DD$. For the notation $\cpN$, see Notation \ref{NotationNNN}.
\begin{example}[Topos of $\Z$-sets]\label{ExampleZAction}
    We classify all quotients of the topos of $\Z$-sets $\E \coloneqq \ps{\Z}\cong \Set^{\Z}$, by constructing an order isomorphism
    \[
    \Gc_{\E} \cong \cpN.
    \]
    Since every connected object is isomorphic to $C_n \coloneqq (\Z/n\Z, +1)$ for some $n\in \N$, it is enough to prove 
    \[n \text{ divides } m \iff C_n \ogeq C_m. \]
    Here, $C_0$ means $(\Z,+1)$. If $n$ divides $m$, then $C_n$ is a coequalizer of 
    \[
    \begin{tikzcd}
        C_m \ar[r, shift right," +0"'] \ar[r, shift left,"+n "] &C_m,
    \end{tikzcd}
    \]
    which implies $C_n \ogeq C_m$. 
    On the other hand, if $n$ does not divide $m$, then the quotient $\E \hookleftarrow \ps{\Z/m\Z}$, induced by the surjective group homomorphism $\Z \twoheadrightarrow \Z/m\Z$ (Example \ref{ExampleBOFquotient}), contains $C_m$ but not $C_n$. This implies $C_n \not \ogeq C_m$. Thus we have proven $\Gc_{\E} \cong \cpN$.

    By Proposition \ref{PropositionChainOfEmbeddings}, we obtain an embedding $\Q_{\E} \hookrightarrow \Dw{\cpN}$. The remaining task is determining the image of the embedding. For $n,m \in \cpN$, we can prove
    \[
    C_n \times C_m \cong \underbrace{C_{\mathrm{lcm}(n,m)} + \dots +C_{\mathrm{lcm}(n,m)}}_{\mathrm{gcd}(n,m) \text{ times}},
    \]
    except that $C_0 \times C_0$ is a coproduct of countably many $C_0$. 
    This implies that a realizable $D\subset \cpN$ should be closed under taking finite supremums, i.e., $D$ should be an \emph{ideal} of $\cpN$. 
    Conversely, for any ideal $I \subset \cpN$, by considering a topological group structure on $\Z$ where the family of subgroups $\{n\Z (\subset \Z)\}_{n\in I}$ forms a fundamental neighborhood system of $0 \in \Z$, one can construct the corresponding quotient topos (see Example \ref{ExampleTopologicalGroupQuotient} and \cite{hora2023internal}).
    Then, we have obtained an order isomorphism
    \[\Q_{\E} \cong \mathrm{Ideal}(\cpN).\]
    
\end{example}

\section{Quotients of the topos of discrete dynamical systems}\label{SectionMainTheorem}
Now, it is time to classify the quotients of $\DD$!
Based on Proposition \ref{PropositionChainOfEmbeddings}, our method is to determine the structure of $\Gc_{\DD}$.

\subsection{Constructions of representatives}
We will construct a poset isomorphism
\[\Gc_{\DD} \cong \NN,\]
where
$\NN$ is equipped with the product order (see Notation \ref{NotationNNN}). This is similar to the case of $\ps{\Z}$ (Example \ref{ExampleZAction}), but $\DD$ is not so straightforward.

To construct an isomorphism $\NN \to \Gc_{\DD}$, we define a connected discrete dynamical system $\T{a}{b} \in \Gc_{\DD}$ for each $(a,b) \in \NN$. Informally, $\T{a}{b}$ looks like a loop $(\Z/ b\Z, +1)$ with a \dq{tail} of length $a$. 
Before looking at the definition below, it might be quicker to see the conclusion (Figure \ref{FigureTs}).
\begin{figure}[ht]
    \begin{shaded}
        \centering
        \begin{tikzpicture} [scale = \scvalue]
        \def \r{2/pi}
        \def \g{-2.3}
        
        \def \a{0}
        \node at (-5, \a -0.5) {$\T{3}{4}\colon$};
        \foreach \x in {-3, -2, -1} {
            \draw[black, thick, \arst] (\x, \a) -- (\x + 1, \a);
            \fill[black] (\x,\a) circle (0.06);
        }
        \draw[black, thick, \arst] (0,\a)     arc (90  :  0:\r);
        \draw[black, thick, \arst] (\r,\a-\r) arc (360 :270:\r);
        \draw[black, thick, \arst](0, \a-2*\r)arc (270 :180:\r);
        \draw[black, thick, \arst] (-\r,\a-\r)arc (180 : 90:\r);
        \fill[black] (0,\a) circle (0.06);
        \fill[black] (\r,\a-\r) circle (0.06);
        \fill[black] (0, \a-2*\r) circle (0.06);
        \fill[black] (-\r,\a-\r) circle (0.06);

        \def \b{\g}
        \node at (-5, \b -0.5) {$\T{\infty}{4}\colon$};
        \foreach \x in {-3, -2, -1} {
            \draw[black, thick, \arst] (\x, \b) -- (\x + 1, \b);
            \fill[black] (\x,\b) circle (0.06);
        }
        \draw[black, thick, \arst] (-4+0.5, \b) -- (-4 + 1, \b);
        \node at (-4, \b) {$\cdots$};
        \draw[black, thick, \arst] (0,\b)     arc (90  :  0:\r);
        \draw[black, thick, \arst] (\r,\b-\r) arc (360 :270:\r);
        \draw[black, thick, \arst](0, \b-2*\r)arc (270 :180:\r);
        \draw[black, thick, \arst] (-\r,\b-\r)arc (180 : 90:\r);
        \fill[black] (0,\b) circle (0.06);
        \fill[black] (\r,\b-\r) circle (0.06);
        \fill[black] (0, \b-2*\r) circle (0.06);
        \fill[black] (-\r,\b-\r) circle (0.06);

        \def \c{2 * \g}
        \node at (-5, \c -0.5) {$\T{3}{0}\colon$};
        \foreach \x in {-3, -2} {
            \draw[black, thick, \arst] (\x, \c) -- (\x + 1, \c);
        }
        \foreach \x in {-3, -2, -1} {
            \fill[black] (\x,\c) circle (0.06);
        }
        \draw[black, thick, \arst] (-1, \c) -- (0, \c -1);
        \foreach \x in {-3, -2, -1, 0, 1} {
            \draw[black, thick, \arst] (\x, \c -1) -- (\x + 1, \c -1);
        }
        \draw[black, thick, \arst] (-3.5, \c -1) -- (-3, \c -1);
        \node at (-4, \c -1) {$\cdots$};
        \draw[black, thick] (2, \c -1) -- (2.5, \c -1);
        \node at (3, \c -1) {$\cdots$};
        \foreach \x in {-3, -2, -1, 0, 1, 2} {
            \fill[black] (\x,\c -1) circle (0.06);
        }

        \def \d{3 * \g}
        \node at (-5, \d -0.5) {$\T{\infty}{0}\colon$};
        \foreach \x in {-3, -2} {
            \draw[black, thick, \arst] (\x, \d) -- (\x + 1, \d);
        }
        \draw[black, thick, \arst] (-3.5, \d) -- (-3, \d);
        \node at (-4, \d) {$\cdots$};
        \foreach \x in {-3, -2, -1} {
            \fill[black] (\x,\d) circle (0.06);
        }
        \draw[black, thick, \arst] (-1, \d) -- (0, \d -1);
        \foreach \x in {-3, -2, -1, 0, 1} {
            \draw[black, thick, \arst] (\x, \d -1) -- (\x + 1, \d -1);
        }
        \draw[black, thick, \arst] (-3.5, \d -1) -- (-3, \d -1);
        \node at (-4, \d -1) {$\cdots$};
        \draw[black, thick] (2, \d -1) -- (2.5, \d -1);
        \node at (3, \d -1) {$\cdots$};
        \foreach \x in {-3, -2, -1, 0, 1, 2} {
            \fill[black] (\x,\d -1) circle (0.06);
        }
        
        \end{tikzpicture}
        
        \caption{Visualization of $\T{a}{b}$ for some $(a,b)$}
        \label{FigureTs}
    \end{shaded}
\end{figure}


\begin{definition}[$\T{a}{b}$]
    For $(a,b)\in \NN$, we define a connected discrete dynamical system $\T{a}{b}$ as follows:
    \begin{itemize}
        \item The underlying set is a disjoint union of 
        $\{i\in \Z\mid -a\leq i < 0 \}$ and $\Z/b\Z$.
        \item The associated endofunction sends $\overline{j}\in \Z/b\Z$ to $\overline{j+1} \in \Z/b\Z$ and $-a\leq i < 0$ to
        \[
        \begin{cases}
            i+1  & (-a\leq i < -1)\\
            \overline{0} \in \Z/b\Z & (i=-1).
        \end{cases}
        \]
    \end{itemize}
\end{definition}
In subsection \ref{SubsectionConnectedIsom}, we will prove this correspondence $\NN \to \Gc_{\DD}$ is an isomorphism. 

Before proceeding,
we give a simple observation on $\T{\infty}{0}$. As you can see in Figure \ref{FigureTs}, $\T{\infty}{0}$ has a non-trivial automorphism, which flips up and down. Utilizing this fact, we can prove the following lemma.
\begin{lemma}\label{LemmaNsuccisMaximum}
    $\T{\infty}{0}$ is  equivalent to $\Nsucc$ with respect to the generative order.
    \[\T{\infty}{0} \gog \Nsucc\]
    Furthermore, it is the maximum element of $\G_{\DD}$.
\end{lemma}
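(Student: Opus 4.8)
The plan is to establish the equivalence $\T{\infty}{0} \gog \Nsucc$ by proving $\gen{\T{\infty}{0}} = \gen{\Nsucc}$; the ``furthermore'' clause is then immediate, since Example~\ref{ExampleMaximumElement} already records that $\Nsucc$ is the maximum element of $\G_{\DD}$, i.e. $\gen{\Nsucc} = \DD$, so any object equivalent to $\Nsucc$ is also maximal. One inclusion is free: every prequotient of $\DD$ is contained in $\DD = \gen{\Nsucc}$, hence in particular $\gen{\T{\infty}{0}} \subseteq \gen{\Nsucc}$. Everything therefore reduces to showing $\gen{\Nsucc} \subseteq \gen{\T{\infty}{0}}$, and since $\gen{\Nsucc}$ is by definition the smallest prequotient containing $\Nsucc$, it is enough to produce $\Nsucc$ (up to isomorphism) from $\T{\infty}{0}$ using finite limits and small colimits.

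The key step is to realize $\Nsucc$ as an equalizer built from the flip automorphism of $\T{\infty}{0}$. Writing the underlying set of $\T{\infty}{0}$ as the disjoint union of the ``tail'' $\{i \in \Z \mid i < 0\}$ and the ``loop'' $\Z/0\Z = \Z$ (with elements $\overline{j}$), let $\sigma \colon \T{\infty}{0} \to \T{\infty}{0}$ be the map fixing $\overline{j}$ for $j \geq 0$, sending $\overline{j} \mapsto j$ for $j < 0$, and sending a tail element $i$ to $\overline{i}$. A direct verification shows that $\sigma$ commutes with the endofunction, that $\sigma \circ \sigma = \id$, and that $\sigma \neq \id$; this is exactly the ``up--down flip'' mentioned before the lemma. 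I would then form the equalizer of $\id_{\T{\infty}{0}}$ and $\sigma$. Because limits in the presheaf topos $\DD$ are computed pointwise, this equalizer is the sub-discrete-dynamical-system carried by the fixed locus $\{x \mid \sigma(x) = x\}$, which equals $\{\overline{j} \mid j \geq 0\}$; it is closed under the endofunction since $\sigma$ is a morphism, and with the induced dynamics it is isomorphic to $\Nsucc$ via $\overline{j} \leftrightarrow j$.

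Consequently $\Nsucc$ is (isomorphic to) a finite limit of objects of $\gen{\T{\infty}{0}}$, so $\Nsucc \in \gen{\T{\infty}{0}}$ because a prequotient is closed under finite limits. This gives $\gen{\Nsucc} \subseteq \gen{\T{\infty}{0}}$, and together with the trivial reverse inclusion above we obtain $\gen{\T{\infty}{0}} = \gen{\Nsucc} = \DD$, i.e. $\T{\infty}{0} \gog \Nsucc$, and this object is then the maximum element of $\G_{\DD}$ by Example~\ref{ExampleMaximumElement}.

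There is no serious obstacle here; the argument is short. The only points needing care are verifying that the explicitly described flip $\sigma$ really is a morphism of discrete dynamical systems (so that its fixed locus is a genuine subobject) and identifying that fixed locus correctly, so as to see it is $\Nsucc$ rather than some larger sub-system. The substantive content is just the observation that equalizing $\id$ with the flip collapses the two infinite incoming branches of $\T{\infty}{0}$ onto the single forward ray $\overline{0} \to \overline{1} \to \overline{2} \to \cdots$, which is $\Nsucc$.
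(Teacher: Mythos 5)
Your proof is correct and follows essentially the same approach as the paper: the paper also reduces the claim to $\T{\infty}{0} \goeq \Nsucc$ via Example \ref{ExampleMaximumElement} and then takes the equalizer of the flip automorphism and the identity. You simply spell out the explicit formula for the flip $\sigma$, verify it is a morphism of discrete dynamical systems, and identify its fixed locus as $\Nsucc$, all of which the paper leaves implicit.
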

\begin{proof}
    Since $\Nsucc$ is the maximum element of $\G_{\DD}$ (Example \ref{ExampleMaximumElement}), it suffices to prove $\T{\infty}{0} \goeq \Nsucc$. 
    To prove that, take the equalizer of the automorphism that flips up and down in Figure \ref{FigureTs} and the identity.
\end{proof}


\subsection{Generalized period and height}
In Definition \ref{DefinitionPeriodHeightElement}, we have defined the period and height of an element of a discrete dynamical system.
In this subsection, we define them for a connected discrete dynamical system, by extending those notions for an element. Visualized examples (Figure \ref{FigureHeightExample}) will be helpful for quickly becoming familiar with those notions.
\begin{figure}[ht]
    \begin{shaded}
        \centering
        \begin{tikzpicture} [scale = \scvalue]
        \def \r{2/pi}
        \def \g{-2.3}
        \def \core{0.06}
         \def \b{-4}

        \foreach \x in {3, 2, 1} {
            \draw[gray, thick, \arst] (-\x, \b) -- (-\x + 1, \b);
            \fill[gray] (-\x,\b) circle (0.06);
            \node at (-\x, \b)[above] {${\x}$};
        }
        \foreach \x in {3, 2, 1, 0} {
            \node at (-\x, \b)[above] {${\x}$};
        }

        \foreach \x in {1,2,3,4} {
            \draw[gray, thick, \arst] (\x, \b - 2*\r) -- (\x - 1, \b - 2*\r);
            \fill[gray] (\x, \b - 2*\r) circle (0.06);
        }

        \foreach \x in {0,1,2,3,4} {
            \node at (\x, \b - 2*\r)[below] {${\x}$};
        }

        \foreach \x in {1} {
            \draw[gray, thick, \arst] (\r,\b+ \x- \r) -- (\r, \b+\x- \r -1);
            \fill[gray] (\r,\b+ \x- \r) circle (0.06);
        }

        \foreach \i in {1,2,3,4, 5} {
        \pgfmathsetmacro{\x}{\r+ cos(90 / \i)}
        \pgfmathsetmacro{\y}{\b +1 - \r + sin(90/ \i)}
            \draw[gray, thick, \arst] (\x, \y) -- (\r, \b+1- \r);
            \fill[gray] (\x, \y) circle (0.06);
            \node at (\x, \y)[right] {$2$};
        }

        \foreach \i in {-1.5,-0.5,0.5} {
        \pgfmathsetmacro{\x}{\r+ cos( - 5 * \i )}
        \pgfmathsetmacro{\y}{\b +1 - \r + sin(- 5 * \i)}
            \node at (\x, \y) {$\cdot$};
        }

        \draw[gray, thick, \arst] (-1, \b + 0.5) -- (0,\b);
        \fill[gray] (-1, \b + 0.5) circle (0.06);
        \node at (-1, \b + 0.5)[above] {$1$};

        \draw[gray, thick, \arst] (-3, \b - 0.5) -- (-2,\b);
        \fill[gray] (-3, \b - 0.5) circle (0.06);
        \node at (-3, \b - 0.5)[above] {$3$};

        \draw[gray, thick, \arst] (3, \b - 2 * \r + 0.5) -- (2, \b - 2 * \r);
        \fill[gray] (3, \b - 2 * \r + 0.5) circle (0.06);
        \node at (3, \b - 2 * \r + 0.5)[below] {$3$};

        \foreach \x in {4,5}{
        \draw[gray, thick, \arst] (\x, \b - 2 * \r + 0.5) -- (\x -1, \b - 2 * \r + 0.5);
        \fill[gray] (\x, \b - 2 * \r + 0.5) circle (0.06);
        \node at (\x, \b - 2 * \r + 0.5)[below] {$\x$};
        }

        \draw[gray, thick, \arst] (5, \b - 2 * \r + 1) -- (4, \b - 2 * \r + 0.5);
        \fill[gray] (5, \b - 2 * \r + 1) circle (0.06);
        \node at (5, \b - 2 * \r + 1)[below] {$5$};

        \draw[black, very thick, \arst] (0,\b)     arc (90  :  0:\r);
        \draw[black, very thick, \arst] (\r,\b-\r) arc (360 :270:\r);
        \draw[black, very thick, \arst](0, \b-2*\r)arc (270 :180:\r);
        \draw[black, very thick, \arst] (-\r,\b-\r)arc (180 : 90:\r);
        \fill[black] (0,\b) circle (\core);
        \fill[black] (\r,\b-\r) circle (\core);
        \fill[black] (0, \b-2*\r) circle (\core);
        \fill[black] (-\r,\b-\r) circle (\core);

        \foreach \x in {0,1,2} {
            \node at (\r,\b+ \x- \r)[right] {${\x}$};
        }
        \node at (-\r,\b-\r)[left] {$0$};

        \node at (1,\b -2) {(height, period) $=(5,4)$};

        \def \c{-10.5}
        


        \foreach \x in {-1,1,3} {
            \foreach \y in {1,2} {
                \draw[gray, thick, \arst] (\x, \c+\y)-- (\x, \c+\y-1);
                \fill[gray] (\x, \c+\y) circle (0.06);
                \node at (\x, \c+\y)[right] {$\y$};
            }
        }

        \foreach \x in {-2,0,2,4} {
            \foreach \y in {1,2,3} {
                \draw[gray, thick, \arst] (\x, \c+\y)-- (\x, \c+\y-1);
                \fill[gray] (\x, \c+\y) circle (0.06);
                \node at (\x, \c+\y)[right] {$\y$};
            }
        }

        \foreach \x in {-2,-1,0,1,2,3}{
        \draw[black, very thick, \arst] (\x, \c) -- (\x + 1, \c);
        }
        \draw[black, very thick, \arst] (-2.5, \c) -- (-2, \c);
        \draw[black, very thick] (4, \c) -- (4.5, \c);
        \node at (-3, \c) {$\cdots$};
        \node at (5, \c) {$\cdots$};
        \foreach \x in {-2,-1,0,1,2,3,4}{
        \fill[black] (\x,\c) circle (\core);
        \node at (\x,\c)[below] {$0$};
        }
        \node at (1,\c -0.7) {(height, period) $=(3,0)$};

        \def \d{-12.5}

        

        \foreach \x in {-2, -1} {
            \draw[gray, thick, \arst] (\x, \d) -- (\x + 1, \d);
        }
        \draw[gray, thick, \arst] (-2.5, \d) -- (-2, \d);
        \node at (-3, \d) {$\cdots$};
        \foreach \x in {-2, -1, 0} {
            \fill[gray] (\x,\d) circle (0.06);
            \node at (\x, \d)[below] {$\infty$};
        }
        \draw[gray, thick, \arst] (0, \d) -- (1, \d -1);
        
        \foreach \x in {-2, -1, 0, 1,2,3} {
            \draw[gray, thick, \arst] (\x, \d -1) -- (\x + 1, \d -1);
        }
        \draw[gray, thick, \arst] (-2.5, \d -1) -- (-2, \d -1);
        \node at (-3, \d -1) {$\cdots$};
        \draw[gray, thick] (4, \d -1) -- (4.5, \d -1);
        \node at (5, \d -1) {$\cdots$};
        \foreach \x in {-2, -1, 0, 1, 2, 3, 4} {
            \fill[gray] (\x,\d -1) circle (0.06);
            \node at (\x, \d -1)[below] {$\infty$};
        }

        \node at (1,\d -1.7) {(height, period) $=(\infty, 0)$};
        \end{tikzpicture}
        
        \caption{Visualized examples of period, height, and core}
        \label{FigureHeightExample}
    \end{shaded}
\end{figure}

What roles do period and height play in our context of analyzing $\Gc_{\DD}$?
The answer is quite simple: we will prove that the correspondence
\[
\Gc_{\DD} \ni X\mapsto \text{(height of $X$, period of $X$)} \in \NN
\]
provides the inverse function to the one defined in the previous section.


Defining period is relatively easy, due to the next lemma.
\begin{lemma}
    For a connected discrete dynamical system, all elements have the same period.
\end{lemma}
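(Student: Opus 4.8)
The plan is to show that the period is constant along forward orbits and then feed this into the concrete description of connectedness. First I would establish the auxiliary claim that for any element $x$ of a discrete dynamical system $\X=(X,f)$, the period of $x$ equals the period of $f(x)$. The forward orbit of $f(x)$, namely $f(x), f^2(x), f^3(x),\dots$, is obtained from the forward orbit of $x$ by deleting the initial term; hence one sequence is eventually periodic if and only if the other is, and when they are, they eventually coincide term by term (up to the shift), so they enter the very same cycle and the minimal period recorded in Definition \ref{DefinitionPeriodHeightElement} is unchanged. In the remaining case, where neither orbit is eventually periodic, both periods are $0$ by convention. Iterating this, the period of $x$ coincides with the period of $f^{n}(x)$ for every $n\in\N$.

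Next I would invoke condition (\ref{ConditionConcreteConnectedness}) of Definition \ref{DefinitionConnectedDD}: since $\X$ is connected, for any two elements $x,y\in X$ there exist $n,m\in\N$ with $f^{n}(x)=f^{m}(y)$. Combining this with the previous paragraph gives
\[
\text{(period of }x)=\text{(period of }f^{n}(x))=\text{(period of }f^{m}(y))=\text{(period of }y),
\]
which is exactly the assertion of the lemma. (Connectedness also guarantees $X\neq\emptyset$, so there is at least one element whose period is being referred to.)

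The only point demanding a little care — and it is the closest thing to an obstacle here — is the bookkeeping inside the auxiliary claim: one must verify that passing from $x$ to $f(x)$ neither creates nor destroys eventual periodicity, and that it preserves the \emph{minimal} period rather than merely some period, including the degenerate situations where $x$ is already periodic (so the orbit is a pure cycle) or where the orbit never becomes periodic. This amounts to a short case analysis according to whether the height of $x$ is $0$, positive, or undefined, and in each case the conclusion is immediate, so no genuine difficulty arises.
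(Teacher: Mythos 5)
Your argument is correct and is precisely what the paper's terse proof (\dq{It is because of Condition \ref{ConditionConcreteConnectedness}...}) is alluding to: you spell out the omitted step that the period is invariant along forward orbits and then combine it with the concrete connectedness condition. Same approach, just fully elaborated.
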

\begin{proof}
    It is because of Condition \ref{ConditionConcreteConnectedness} in Definition \ref{DefinitionConnectedDD}.
\end{proof}

This lemma verifies the next definition. (Notice that we also use the non-emptyness of a connected discrete dynamical system.)
\begin{definition}[Period]
    The \emph{period} of a connected discrete dynamical system
    is the period of any of its elements.
\end{definition}

Next, we define the height of a connected discrete dynamical system. Roughly speaking, we define it as a supremum of the height of all elements. However, the problem is that, the height is not defined for elements with the period of $0$. To deal with this problem, we first extend the definition of height to all elements.


One might think, \dq{Well, if elements with a period of $0$ never enter a loop, then we could just set their height to infinity.} However, this approach does not work well for later discussions. In fact, by slightly relaxing the concept of a loop, we can define a more appropriate quantity. We call this relaxed concept a \emph{core}. In Figure \ref{FigureHeightExample}, the core and the other part are colored by black and gray, respectively.

\begin{definition}[Core]
    For a connected discrete dynamical system $\X$ with the period $b\in \cpN$, if there is a unique subobject
    that is isomorphic to $\T{0}{b}\cong(\Z/b\Z, +1)$, then we call it the \emph{core} of $\X$.
\end{definition}

\begin{example}[The case where $b>0$]
    If the period of a connected discrete dynamical system $\X = (X,f)$ is positive, then $\X$ always has a core, which is the set of periodic elements
    \[
    \{x\in X \mid \text{there exists }n>0\text{ such that } f^{n}(x)=x\}.
    \]
\end{example}

\begin{example}[The case where $b=0$]
    $(\Z,+1)$ has a core, but
    $\Nsucc$ and $\T{\infty}{0}$ do not have a core.
    \invmemo{Write more}
\end{example}

\begin{definition}[Height]\label{DefinitiongeneralizedHeight}
    Let $\X= (X,f)$ be a connected discrete dynamical system.
    \begin{itemize}
        \item If $X$ has a core $C\subset \X$, then the height of an element $x\in X$ is the minimum natural number $n \in \N$ such that $f^n (x) \in C$. Otherwise, the height of every element is $\infty$.
        \item The height of $\X$ is the supremum of the heights of its elements in the poset $\cN$.

    \end{itemize}
\end{definition}

Due to the connectedness, if $\X$ has a core, then the height of each element is finite.


\begin{example}
    If the Collatz conjecture is true, the associated connected discrete dynamical system (Example \ref{ExampleCollatz}) is connected and has period $3$ and height $\infty$.
\end{example}
\begin{lemma}
    For every $(a,b)\in \NN$, the height of $\T{a}{b}$ is $a$ and the period is $b$.
\end{lemma}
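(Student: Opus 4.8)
The plan is to unwind the definitions of $\T{a}{b}$, period, core, and height, and check the two claims separately. For the period: by the preceding lemma all elements of a connected system have the same period, so it suffices to compute the period of one convenient element, say $\overline{0}\in\Z/b\Z$. Its forward orbit is $\overline{0},\overline{1},\overline{2},\dots$, which is exactly the orbit in $(\Z/b\Z,+1)$; when $b>0$ this is periodic with minimal period $b$ (the smallest $n>0$ with $\overline{0+n}=\overline{0}$ is $n=b$) and height $0$ at that element, and when $b=0$ the orbit in $\Z$ never repeats, so the period is $0$ by the convention in Definition \ref{DefinitionPeriodHeightElement}. In either case the period of $\T{a}{b}$ equals $b$.

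For the height, I would split on whether $b>0$ or $b=0$. If $b>0$, the system $\T{a}{b}$ has a core, namely the copy of $\Z/b\Z\cong\T{0}{b}$ sitting inside it (and one should note it is the \emph{unique} such subobject — the tail elements $-a\le i<0$ are not periodic, so any subobject isomorphic to $(\Z/b\Z,+1)$ must land in the periodic part, which is exactly $\Z/b\Z$). Then the height of an element is the least $n$ with $f^n$ of it in the core: for $\overline{j}\in\Z/b\Z$ this is $0$, and for the tail element $i$ with $-a\le i<0$ one checks $f^{|i|}(i)=\overline{0}\in\Z/b\Z$ while no smaller iterate leaves the tail, so the height of $i$ is $|i|$. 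The supremum over all elements of $\{0\}\cup\{1,\dots,a\}$ is $a$. If $b=0$ and $a<\infty$, then $\T{a}{0}$ contains $(\Z,+1)$ (the $\Z/0\Z$ part) as a core — again unique, since the finitely many tail elements are not in any bi-infinite orbit — and the tail element $i$ has height $|i|$, so the supremum of heights is again $a$. If $b=0$ and $a=\infty$, then $\T{\infty}{0}$ has no core (this is the case recorded in the example, analogous to $\Nsucc$), so by Definition \ref{DefinitiongeneralizedHeight} every element has height $\infty$ and the height of the system is $\infty=a$.

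The only genuinely delicate point is the uniqueness clause in the definition of \emph{core} when $b>0$ and $a$ is large, and more subtly when $b=0$ and $a<\infty$: I must rule out any ``unexpected'' subobject isomorphic to $\T{0}{b}$ that uses tail elements. This follows because a subobject isomorphic to $(\Z/b\Z,+1)$ with $b>0$ consists entirely of periodic points, and in $\T{a}{b}$ the set of periodic points is precisely $\Z/b\Z$; when $b=0$, a subobject isomorphic to $(\Z,+1)$ requires every element to have infinitely many $f$-preimages within the subobject, which fails on the finite tail. So the core is forced to be the stated copy, and there is no obstruction beyond this bookkeeping. I would present the argument as a short case analysis ($b>0$; $b=0,a<\infty$; $b=0,a=\infty$), each case being an immediate orbit computation together with the uniqueness remark.
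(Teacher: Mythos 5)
Your argument is correct and is exactly the elaboration the paper has in mind when it says the lemma is ``straightforward by checking each case'': you compute the period on one element (using connectedness), identify and verify uniqueness of the core, and compute heights by orbit length. One small wording nit: in the $b=0$, $a<\infty$ uniqueness argument, a subobject isomorphic to $(\Z,+1)$ requires each element to have an \emph{infinite backward chain} of iterated preimages within the subobject (each element of $\Z$ has exactly one $f$-preimage, not infinitely many), which the finite tail obstructs since $-a$ has no preimage; the conclusion and the rest of the reasoning are sound.
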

\begin{proof}
    It is straightforward by checking each case.
\end{proof}


Actually, we have seen this generalized notion of height, in an implicit form in Proposition \ref{PropositionEventualBijConstruction}.
\begin{proposition}\label{PropositionEventualBijIntermsOfHeight}
    The quotient $Q$ constructed by Proposition \ref{PropositionEventualBijConstruction} and $a\in \N$ is exactly the class of discrete dynamical systems whose connected component's height is at most $a$.
\end{proposition}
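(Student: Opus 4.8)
The plan is to reduce the statement to connected discrete dynamical systems and then identify $\Image{f^{a}}$ with the core. Write $\X=(X,f)$ as a coproduct $\coprod_{\lambda}\X_{\lambda}$ of connected discrete dynamical systems (possible since $\DD$ is locally connected), with $f_{\lambda}$ the restriction of $f$ to $\X_{\lambda}$. Because $\DD$ is extensive and $f^{a}$ restricted to the summand $\X_{\lambda}$ is $f_{\lambda}^{a}$, we have $\Image{f^{a}}=\coprod_{\lambda}\Image{f_{\lambda}^{a}}$, and the restriction $f\colon\Image{f^{a}}\to\Image{f^{a}}$ is a bijection if and only if each $f_{\lambda}\colon\Image{f_{\lambda}^{a}}\to\Image{f_{\lambda}^{a}}$ is. Since the condition ``every connected component has height at most $a$'' is also checked componentwise, it suffices to prove, for a connected $\X$ of period $b$: the restriction $f\colon\Image{f^{a}}\to\Image{f^{a}}$ is bijective if and only if the height of $\X$ is at most $a$.

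For the ``if'' direction, assume the height of $\X$ is at most $a$. Since this height is finite, $\X$ has a core $C\cong\T{0}{b}$ (otherwise the height would be $\infty$ by Definition \ref{DefinitiongeneralizedHeight}). Every element $x$ has height at most $a$, so $f^{a}(x)\in C$, giving $\Image{f^{a}}\subseteq C$; conversely $f$ restricts on $C\cong(\Z/b\Z,+1)$ to the ``$+1$'' map, whose $a$-th iterate is surjective onto $C$, so $C\subseteq\Image{f^{a}}$. Hence $\Image{f^{a}}=C$ and $f|_{\Image{f^{a}}}=f|_{C}$ is bijective, so $\X$ lies in the quotient of Proposition \ref{PropositionEventualBijConstruction}.

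For the ``only if'' direction, set $Y\coloneqq\Image{f^{a}}$; it is non-empty since $X\neq\emptyset$, and it is $f$-stable. If $f|_{Y}$ is bijective then $f(Y)=Y$, so $\Image{f^{n}}=Y$ for every $n\geq a$. The system $(Y,f|_{Y})$ is again connected — the witnesses $n,m$ with $f^{n}(y_{1})=f^{m}(y_{2})$ from condition (\ref{ConditionConcreteConnectedness}) of Definition \ref{DefinitionConnectedDD} remain inside the $f$-stable set $Y$ — and it has bijective endofunction, so $(Y,f|_{Y})\cong(\Z/b'\Z,+1)=\T{0}{b'}$ for some $b'\in\N$, exactly as in the classification of connected $\Z$-sets in Example \ref{ExampleZAction}. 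Comparing the period of the elements of $Y$, computed inside $\X$, with $b$ forces $b'=b$, so $Y\cong\T{0}{b}$. Moreover $Y$ is the \emph{unique} subobject isomorphic to $\T{0}{b}$: any such subobject has bijective endofunction, hence lies in $\Image{f^{n}}$ for all $n$ and therefore in $Y$, and a subobject of $\T{0}{b}$ isomorphic to $\T{0}{b}$ must be all of $\T{0}{b}$. Thus $Y$ is the core $C$ of $\X$, and $f^{a}(x)\in Y=C$ for every $x\in X$, so the height of each element, and hence of $\X$, is at most $a$.

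The step I expect to be the main obstacle is this ``only if'' direction, specifically the passage from ``$f|_{\Image{f^{a}}}$ is bijective'' to ``$\Image{f^{a}}$ is the core of $\X$.'' It bundles together several small facts: that the stabilized image $(Y,f|_{Y})$ is connected with bijective endofunction, that such systems are precisely the $\T{0}{b'}$, that the parameter $b'$ coincides with the period $b$ of $\X$, and that uniqueness of a sub-$\T{0}{b}$ upgrades ``$Y\cong\T{0}{b}$'' to ``$Y$ is the core.'' The ``if'' direction is comparatively routine once one notes that $\X$ must have a core and that the $a$-th iterate of $f|_{C}$ is surjective on it.
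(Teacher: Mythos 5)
Your proof is correct and follows the same route as the paper's: reduce to a connected $\X$, then in the ``if'' direction show $\Image{f^{a}}$ equals the core $C$ (which exists because the height is finite) and $f$ is a self-bijection on $C$, and in the ``only if'' direction show that $C=\Image{f^{a}}$ is a core of $\X$. The paper states the ``only if'' step as a one-line assertion; you supply the details that $Y=\Image{f^a}$ is $f$-stable, connected, with bijective endofunction, hence isomorphic to $\T{0}{b}$, and the unique such subobject.
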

\begin{proof}
    It is enough to prove that, for a connected discrete dynamical system $\X = (X,f)$, $\X$ belongs to the quotient $Q$ if and only if the height of $\X$ is at most $a$.

    We first prove \dq{if} part. If the height of $\X$ is at most $a$, $\X$ has a core $C \subset \X$ and $f^{a}(X) = C$. Since $f$ is a self-bijection on the core $C$, this proves that $\X$ belongs to $Q$.

    The \dq{only if} part follows from the fact that, if $\X$ belongs to $Q$, then $C=\Image{f^a}$ defines a core of $\X$.
\end{proof}

\subsection{Generative order of connected discrete dynamical systems}\label{SubsectionConnectedIsom}
In this subsection, we prove that the function $\NN \to \Gc_{\DD}$ is an isomorphism of posets (Proposition \ref{PropositionConnectedIsomorphic}). We divide our proof into three parts.
\begin{enumerate}
    \item order-preserving (part \ref{subsubsectionOrderpreserving})
    \item surjective (part \ref{subsubsectionSurjective})
    \item embedding (part \ref{subsubsectionEmbedding})
\end{enumerate}

Before going into these parts, we prepare two tools for 
constructions, so that we can relate period, height and the generative order. The first lemma is related to period, and the second is to height. For both lemmas, we utilize the canonical endomorphism (Example \ref{ExampleCanonicalEndo}).

\invmemo{Extracting, period, and height respectively. maybe, these can be in the previous subsection. Canonical endo}

\begin{lemma}[Period and generative order]
\label{LemmaEventualImage}
    For a connected discrete dynamical system $\X=(X,f)$ with the period $b\in \cpN$, we have $\X \goeq \T{0}{b}$.
\end{lemma}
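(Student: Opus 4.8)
The goal is to show $\X \goeq \T{0}{b}$, i.e. that every prequotient containing $\X$ also contains $\T{0}{b}\cong(\Z/b\Z,+1)$. The plan is to realize $\T{0}{b}$ as the result of applying finite-limit and small-colimit operations to $\X$, using the canonical endomorphism $f\colon \X\to\X$ from Example \ref{ExampleCanonicalEndo}. The key observation is that iterating $f$ pushes every element toward periodic behavior, so some kind of ``limit'' of the tower $\cdots\to\X\xrightarrow{f}\X\xrightarrow{f}\X$ should extract exactly the core $\T{0}{b}$.

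First I would handle the case $b>0$ separately, since it is the cleanest. When $b>0$ the core $C\subset\X$ exists and equals the set of periodic elements; moreover $f\colon C\to C$ is the bijection $(\Z/b\Z,+1)$. I would observe that $C$ can be obtained from $\X$ by a limit construction: for instance $C$ is the equalizer of $\id_{\X}$ and $f^{b!}$ is not quite right (that would only see elements fixed by $f^{b!}$, which is exactly the periodic ones when their period divides $b!$—and it does, since the period is $b$), so in fact $C$ \emph{is} the equalizer of $f^{b}\colon\X\to\X$ and $\id_\X$. Wait—more carefully, $x$ is periodic of period dividing $b$ iff $f^b(x)=x$, and every periodic element has period exactly $b$ by connectedness, so the equalizer of $f^b$ and $\id_\X$ is precisely $C\cong\T{0}{b}$. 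Hence $\T{0}{b}$ is a finite limit of copies of $\X$, giving $\X\goeq\T{0}{b}$ by Lemma \ref{LemmaRetract}-style reasoning (equalizers are among the finite-limit operations a prequotient is closed under). This disposes of $b>0$.

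The main obstacle is the case $b=0$, where $\X$ may have no core at all (e.g. $\X=\Nsucc$), yet we must still produce $\T{0}{0}\cong(\Z,+1)$. Here the equalizer trick fails. The natural move is a colimit: form the colimit $\colim(\X\xrightarrow{f}\X\xrightarrow{f}\cdots)$, which is a small colimit of copies of $\X$ hence lies in any prequotient containing $\X$. In this colimit $f$ becomes invertible (it is a filtered colimit along $f$), so the result is a coproduct of ``$\Z$-like'' pieces and loops; since $\X$ is connected and has period $0$, I expect this colimit to be a coproduct of copies of $(\Z,+1)$ (possibly one copy, possibly many, e.g. for $\Nsucc$ one gets a single $(\Z,+1)$). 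Then $\T{0}{0}=(\Z,+1)$ is a retract/direct summand of that colimit, so Lemma \ref{LemmaCoproduct} or Lemma \ref{LemmaRetract} gives $\X\goeq\T{0}{0}$. The delicate point is verifying that this colimit is nonempty and genuinely has a $(\Z,+1)$ summand rather than collapsing—this requires checking that $f$ acting on $\X$ is ``eventually injective enough,'' or alternatively picking any element $x\in\X$ and using its forward orbit to name a cofinal $\N$-indexed subdiagram whose colimit maps onto a $(\Z,+1)$. I would spell this out by choosing $x_0\in X$, noting the orbit $x_0,f(x_0),\dots$ and the induced colimiting cone, and identifying the connected component of the colimit containing the image of $x_0$ as $(\Z,+1)$ exactly when the orbit is non-periodic (which holds since the period is $0$). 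Closing this gap carefully—especially ensuring the colimit computation is valid in $\DD$ and that the relevant summand is really $(\Z,+1)$ and not some quotient of it—is where the real work lies.
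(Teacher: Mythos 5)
Your approach is sound and differs from the paper's in an instructive way. The paper proves the lemma uniformly by computing the colimit of the two-sided chain $\cdots\xrightarrow{f}\X\xrightarrow{f}\X\xrightarrow{f}\cdots$ along the canonical endomorphism: its underlying set is $(X\times\Z)/{\sim}$ with $(x,n+1)\sim(f(x),n)$, and connectedness of $\X$ (condition \ref{ConditionConcreteConnectedness} of Definition~\ref{DefinitionConnectedDD}) together with the period being $b$ identifies the colimit with $\T{0}{b}$. Your equalizer construction for $b>0$ is a genuinely different and more elementary route: since every element of $\X$ has period $b$, the equalizer of $f^b$ and $\id_\X$ is exactly the set of periodic elements, and connectedness forces that set to be a single $b$-cycle, so $\T{0}{b}$ is realized as a \emph{finite} limit of copies of $\X$. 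For $b=0$ your one-sided colimit along $f$ is really the paper's construction in disguise (the one-sided chain is cofinal in the two-sided one, so the colimits agree), and the gap you flag is smaller than you fear: the colimit cannot split into several $(\Z,+1)$ summands because it inherits connectedness from $\X$. Given two classes $[x,a]$ and $[y,c]$ in the colimit, connectedness of $\X$ gives $p,q$ with $f^p(x)=f^q(y)$, and a suitable index shift then exhibits a common non-negative iterate of $[x,a]$ and $[y,c]$, verifying condition \ref{ConditionConcreteConnectedness} for the colimit. Since period $0$ means $f^n(x)\neq f^m(x)$ for all $n\neq m$, the induced $\Z$-action on the colimit is free; a free, connected $\Z$-set is a single copy of $(\Z,+1)=\T{0}{0}$, so no retract or summand extraction is needed.
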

\begin{proof}
    We prove the colimit of the chain of the canonical endomorphism (Example \ref{ExampleCanonicalEndo})
    \[
    \begin{tikzcd}
        \cdots \ar[r,"f"] & \X\ar[r,"f"] & \X\ar[r,"f"] & \X\ar[r,"f"] & \cdots 
    \end{tikzcd}
    \]
    is isomorphic to $\T{0}{b} \cong (\Z/b\Z,+1)$. The underlying set of the colimit is a set $(X \times \Z)/ \sim$, where the equivalence relation $\sim$ is generated by
    \[(x,n+1) \sim (f(x),n).\]
    The associated endofunction is defined by $(x,n)\mapsto (f(x),n)$, or equivalently, $(x,n)\mapsto (x,n+1)$.

    Take an arbitrary element $x\in X$ (using the non-emptiness, implied by the connectedness).
    By the connectedness condition \ref{ConditionConcreteConnectedness} in Definition \ref{DefinitionConnectedDD}, every element is equivalent to $(x,n)$ for some $n\in \Z$. Furthermore, it is straightforward to prove that $(x,n) \sim (x,m)$ holds if and only if $n-m \in b\Z$.
\end{proof}
\begin{remark}[As a Kan extension]\label{RemarkAsAKanExtension}
    This is a canonical way to make a discrete dynamical system into \dq{invertible}
    system, i.e., this is the left Kan extension of $\N \to \Set$ along $\N \to \Z$.
\end{remark}


\begin{lemma}[Height filtration]\label{LemmaHeightfiltration}
    Let $\X = (X,f)$ be a connected discrete dynamical system with a core and $\X_n$ denote the subobject that consists of elements whose height is less than or equal to $n$. Then, we have 
    \[
    \X = \sup_{n \in \N} \X_{n}
    \]
    in $\G_{\DD}$. 
\end{lemma}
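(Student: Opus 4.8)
The plan is to realize $\X_n$ as a finite limit built from $\X$ and its core, and dually to realize $\X$ itself as the filtered colimit of the chain $(\X_n)_{n}$; the two inequalities defining the supremum then fall out of the closure properties of prequotients. Write $b\in\cpN$ for the period of $\X$ and let $C\hookrightarrow\X$ be its core, so $C\cong\T{0}{b}$. Since $C$ is $f$-stable and $f$ restricts to a bijection on it, an element $x\in X$ has height at most $n$ exactly when $f^{n}(x)\in C$; hence, using the $n$-th iterate $f^{n}\colon\X\to\X$ of the canonical endomorphism (Example \ref{ExampleCanonicalEndo}), $\X_n$ is precisely the pullback
\[
\begin{tikzcd}
\X_n \ar[r]\ar[d] \arrow[dr, phantom, "\lrcorner", very near start] & C\ar[d,hook]\\
\X \ar[r,"f^{n}"] & \X.
\end{tikzcd}
\]
By Lemma \ref{LemmaEventualImage} we have $\X\goeq\T{0}{b}$, so $\T{0}{b}$ — and therefore the isomorphic object $C$ — lies in $\gen{\X}$; since $\X$ lies in $\gen{\X}$ as well and prequotients are closed under finite limits, each $\X_n$ lies in $\gen{\X}$, and then closure under small coproducts gives $\coprod_{n\in\N}\X_n\in\gen{\X}$, i.e.\ $\coprod_{n\in\N}\X_n\ogeq\X$. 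By Proposition \ref{PropositionFullCoprod} this is the inequality $\sup_{n}\X_n\ogeq\X$.

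For the opposite inequality I would first note that the chain $\X_0\hookrightarrow\X_1\hookrightarrow\cdots$ has colimit $\X$ in $\DD$: colimits in the presheaf topos $\DD$ are computed on underlying sets, and because $\X$ has a core every element has finite height, so $\bigcup_{n}X_n=X$ carries the induced dynamics, namely $f$. Now take any prequotient $Q$ containing $\coprod_{n}\X_n$. Applying Lemma \ref{LemmaCoproduct} to the decomposition $\coprod_{n}\X_n=\X_m+\coprod_{n\neq m}\X_n$ shows $\X_m\in Q$ for every $m$, and since $Q$ is closed under small colimits we get $\X=\colim_{n}\X_n\in Q$. Hence $\X\in\gen{\coprod_{n}\X_n}$, i.e.\ $\X\ogeq\coprod_{n}\X_n=\sup_{n}\X_n$. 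Combining the two inequalities yields $\X\gog\sup_{n}\X_n$, which is the claimed equality in $\G_{\DD}$.

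The only step that is not bookkeeping is the first one: spotting that ``height at most $n$'' is encoded by pulling the core back along $f^{n}$, which is exactly what lets closure of prequotients under finite limits (fed by Lemma \ref{LemmaEventualImage}) do the work. The remaining ingredients — the sectionwise computation of the colimit, closure under small colimits and coproducts, and Lemma \ref{LemmaCoproduct} — are routine, though it is worth checking the degenerate case $\X=C$ separately, where the chain is eventually constant and the statement is immediate.
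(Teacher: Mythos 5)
Your proof is correct and follows essentially the same route as the paper: you show $\X\goeq\X_n$ by pulling the core back (you do it in one step along $f^n$, whereas the paper iterates single-step pullbacks along $f$, but these are the same pullback by composition), and you obtain the reverse inequality via the filtered colimit $\colim_n\X_n=\X$, using finiteness of heights. The extra care you take in spelling out closure under coproducts and small colimits is just making explicit what the paper's terse final lines leave implicit.
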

\begin{proof}
    Notice that the core is none other than $\X_0$. By Lemma \ref{LemmaEventualImage}, we have $\X\goeq \T{0}{b}\cong \X_0$. 
    Since $\X_{n+1}$ is the inverse image of $\X_{n}$ along $f$, we obtain the following chain of pullbacks.
    \[
    \begin{tikzcd}
        \cdots\ar[r]&
        \X_3 \ar[r]\ar[d,rightarrowtail]\arrow[dr, phantom, "\lrcorner", very near start]&
        \X_2 \ar[r]\ar[d,rightarrowtail]\arrow[dr, phantom, "\lrcorner", very near start]&
        \X_1 \ar[r]\ar[d,rightarrowtail]\arrow[dr, phantom, "\lrcorner", very near start]&
        \X_{0}\ar[d,rightarrowtail]\\
        \cdots\ar[r,"f"]&
        \X\ar[r,"f"]&
        \X\ar[r,"f"]&
        \X\ar[r,"f"]&
        \X
    \end{tikzcd}
    \]
    By induction, we have $\X\goeq \X_n$. For the other direction $\X\ogeq \sup_{n \in \N} \X_{n}$, we can use the filtered colimit
    \[
        \begin{tikzcd}
            \X_0\ar[r]&\X_1\ar[r]&\X_2\ar[r]&\cdots &\hspace{-20pt}\X.
        \end{tikzcd}
    \]
    Here we used the fact that the height of an element of $\X$ is finite, as mentioned right after Definition \ref{DefinitiongeneralizedHeight}.
\end{proof}

\subsubsection{Order-preserving}\label{subsubsectionOrderpreserving}
In this part, we prove the correspondence $\NN \to \Gc_{\DD}$ is order-preserving. 
Our strategy is dividing our problem into \dq{height part} and \dq{period part,} verified by the next lemma.

\begin{lemma}\label{LemmaDecompose}
    For every $(a,b)\in \NN$, we have $\T{a}{b} \gog \T{a}{1} + \T{0}{b}$ in $\G_{\DD}$. 
\end{lemma}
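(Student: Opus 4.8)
The plan is to prove the two inclusions of prequotients $\gen{\T{a}{b}}\subseteq\gen{\T{a}{1}+\T{0}{b}}$ and $\gen{\T{a}{1}+\T{0}{b}}\subseteq\gen{\T{a}{b}}$ separately. By Proposition~\ref{PropositionFullCoprod}, $\gen{\T{a}{1}+\T{0}{b}}=\gen{\T{a}{1}}\vee\gen{\T{0}{b}}$, so the first inclusion is equivalent to the single membership $\T{a}{b}\in\gen{\{\T{a}{1},\T{0}{b}\}}$, and the second to the two memberships $\T{0}{b}\in\gen{\T{a}{b}}$ and $\T{a}{1}\in\gen{\T{a}{b}}$. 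The first of the latter two is immediate: $\T{a}{b}$ is connected with period $b$, so Lemma~\ref{LemmaEventualImage} gives $\T{a}{b}\goeq\T{0}{b}$.

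I would get $\T{a}{1}\in\gen{\T{a}{b}}$ by exhibiting $\T{a}{1}$ as a coequalizer of a parallel pair $\T{0}{b}\rightrightarrows\T{a}{b}$. Let $j_0\colon\T{0}{b}\hookrightarrow\T{a}{b}$ be the inclusion of the core $(\Z/b\Z,+1)$, and let $j_1\coloneqq j_0\circ\sigma$, where $\sigma\colon\T{0}{b}\to\T{0}{b}$ is the canonical endomorphism (Example~\ref{ExampleCanonicalEndo}); thus $j_1(\overline{j})=\overline{j+1}$ inside $\T{a}{b}$. Since colimits in $\DD$ are computed on underlying sets, the coequalizer of $j_0,j_1$ is $\T{a}{b}$ modulo the equivalence relation generated by $\overline{j}\sim\overline{j+1}$; this collapses the entire core to one fixed point and leaves the length-$a$ tail untouched, so the coequalizer is $\T{a}{1}$. (When $b=1$ the pair is trivial; when $b=0$ the core is $(\Z,+1)$ and is likewise collapsed to a fixed point; when $a=\infty$ the tail is merely infinite.) As $\T{0}{b},\T{a}{b}\in\gen{\T{a}{b}}$ and prequotients are closed under small colimits, $\T{a}{1}\in\gen{\T{a}{b}}$.

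For the remaining membership $\T{a}{b}\in\gen{\{\T{a}{1},\T{0}{b}\}}$, I would present $\T{a}{b}$ as a retract of the product $\T{a}{1}\times\T{0}{b}$. Write $*$ for the fixed point of $\T{a}{1}$, and for $1\leq i\leq a$ write $-i$ for the tail element (of $\T{a}{1}$ and of $\T{a}{b}$) at distance $i$ from the core. The splitting is $s=(q,\varphi)\colon\T{a}{b}\to\T{a}{1}\times\T{0}{b}$, where $q$ collapses the core of $\T{a}{b}$ to $*$ and fixes the tail, and $\varphi\colon\T{a}{b}\to\T{0}{b}$ is the identity on the core with $\varphi(-i)=\overline{-i}$ (this $\varphi$ is a morphism; it is the canonical map to the colimit in Lemma~\ref{LemmaEventualImage}). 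The retraction $r\colon\T{a}{1}\times\T{0}{b}\to\T{a}{b}$ is the identity on $\{*\}\times(\Z/b\Z)$, sends a pair $(-i,\overline{k})$ with $k+i\equiv 0\pmod b$ to the tail element $-i$, and sends every other pair $(-i,\overline{k})$ "off the tail" to the core element $\overline{k}$. The content of this step is the finite case check that $r$ is a morphism (the cases being $i=1$ and $i\geq 2$, each subdivided according to whether $k+i\equiv 0$) and that $r\circ s=\id_{\T{a}{b}}$; the same formulas apply verbatim when $b=0$ (reading $k+i\equiv 0\pmod 0$ as $k+i=0$) and when $a=\infty$. Granting this, $\T{a}{1}$ and $\T{0}{b}$ are direct summands of $\T{a}{1}+\T{0}{b}$, hence lie in $\gen{\{\T{a}{1},\T{0}{b}\}}$ by Lemma~\ref{LemmaCoproduct}; a prequotient is closed under finite limits, so $\T{a}{1}\times\T{0}{b}$ lies in it; and then $\T{a}{b}$ lies in it as a retract, by Lemma~\ref{LemmaRetract}.

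I expect the retraction $r$ to be the only real obstacle: it must be an honest morphism of discrete dynamical systems, with equivariance checked precisely at the place where the tail of $\T{a}{1}$ feeds into its fixed point. The detour through a product is moreover essentially forced. On one side, merely having $\T{a}{b}$ as a subobject of $\T{a}{1}\times\T{0}{b}$ would not do --- in general $S\not\goeq X$ for a subobject $S\rightarrowtail X$ --- so one genuinely needs a retraction. On the other side, for $b>1$ one cannot instead recover $\T{a}{1}$ as a retract of $\T{a}{b}$, since $\T{a}{b}$ has no fixed point while $\T{a}{1}$ has one; this fixed-point obstruction is exactly why the membership $\T{a}{1}\in\gen{\T{a}{b}}$ must be produced by a colimit rather than a retract. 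And for $b>1$ there is no morphism between $\T{a}{1}$ and $\T{0}{b}$ in either direction (nor any morphism $1\to\T{0}{b}$), so $\T{a}{b}$ cannot be built from $\T{a}{1}$ and $\T{0}{b}$ by colimits alone --- some limit, here the product, has to intervene.
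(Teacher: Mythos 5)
Your proof is correct and follows essentially the same strategy as the paper: reduce to the three relations $\T{a}{b}\goeq\T{0}{b}$, $\T{a}{b}\goeq\T{a}{1}$, and $\T{a}{1}+\T{0}{b}\goeq\T{a}{b}$ via Proposition~\ref{PropositionFullCoprod}, invoke Lemma~\ref{LemmaEventualImage} for the first, collapse the core for the second, and realize $\T{a}{b}$ as a retract of $\T{a}{1}\times\T{0}{b}$ for the third. The only cosmetic difference is that you collapse the core via a coequalizer of the core inclusion and its shift, where the paper instead pushes out the core inclusion along $\T{0}{b}\to\1$; these produce the same object, and your explicit formulas for the retraction $r$ (including the boundary cases $b\in\{0,1\}$, $a=\infty$) supply the detail that the paper leaves to the phrase about ``winding up all the other branches.''
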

\begin{proof}
By Proposition \ref{PropositionFullCoprod}, it is enough to prove the following three relations:
    \begin{itemize}
        \item $\T{a}{b} \goeq \T{0}{b}$
        \item $\T{a}{b} \goeq \T{a}{1}$
        \item $\T{a}{1} + \T{0}{b}\goeq \T{a}{b}$
    \end{itemize}
    \begin{description}
        \item[$\T{a}{b} \goeq \T{0}{b}$] This immediately follows from Lemma \ref{LemmaEventualImage}.
        \item[$\T{a}{b} \goeq \T{a}{1}$] Consider the following pushout diagram
        \[
        \begin{tikzcd}
            \T{0}{b}\ar[r]\ar[d]&\T{a}{b}\ar[d]\\
            \1\ar[r]&\T{a}{1},\arrow[lu, phantom, "\rotatebox{180}{$\lrcorner$}", very near start]
        \end{tikzcd}
        \]
        where $\1$ denotes the terminal object. Notice that even for the cases where $a=\infty$ or $b=0$, this construction does work.
        
        \item[$\T{a}{1} + \T{0}{b}\goeq \T{a}{b}$] 
        From the fact that the prequotient is closed under binary products and from Lemma \ref{LemmaCoproduct}, it follows that $\T{a}{1} + \T{0}{b}\goeq \T{a}{1}\times \T{0}{b}$.
        It is enough to prove $\T{a}{1}\times \T{0}{b}\goeq \T{a}{b}$. Actually, by a straightforward argument, we can prove $\T{a}{b}$ is a retract of $\T{a}{1}\times \T{0}{b}$. Intuitively, the retraction is constructed by \dq{leaving just one branch standing while winding up all the other branches} (see Figure \ref{FigureRetract}). Using Lemma \ref{LemmaRetract} finishes the proof.
        \begin{figure}[ht]
    \begin{shaded}
        \centering
        \begin{tikzpicture} [scale = \scvalue]
        \def \r{2/pi}
        
        \def \a{0}
        \node at (-5, \a -0.5) {$\T{3}{4}\colon$};
        \foreach \x in {-3, -2, -1} {
            \draw[black, thick, \arst] (\x, \a) -- (\x + 1, \a);
            \fill[black] (\x,\a) circle (0.06);
        }
        \draw[black, thick, \arst] (0,\a)     arc (90  :  0:\r);
        \draw[black, thick, \arst] (\r,\a-\r) arc (360 :270:\r);
        \draw[black, thick, \arst](0, \a-2*\r)arc (270 :180:\r);
        \draw[black, thick, \arst] (-\r,\a-\r)arc (180 : 90:\r);
        \fill[black] (0,\a) circle (0.06);
        \fill[black] (\r,\a-\r) circle (0.06);
        \fill[black] (0, \a-2*\r) circle (0.06);
        \fill[black] (-\r,\a-\r) circle (0.06);

        \def \b{-4.5}
        \node at (-5, \b -0.5) {$\T{3}{0}\times \T{0}{4}\colon$};
        \foreach \x in {1,2,3} {
            \draw[black, thick, \arst] (-\x, \b) -- (1- \x, \b);
            \fill[black] (-\x,\b) circle (0.06);

            \draw[black, thick, \arst] (\r, \b - \r + \x) -- (\r, \b - \r + \x -1);
            \fill[black] (\r, \b - \r + \x) circle (0.06);

            \draw[black, thick, \arst] (0 + \x, \b - 2 * \r) -- (0 + \x -1, \b - 2 * \r);
            \fill[black] (0 + \x, \b - 2 * \r) circle (0.06);

            \draw[black, thick, \arst] (0 + \x, \b - 2 * \r) -- (0 + \x -1, \b - 2 * \r);
            \fill[black] (0 + \x, \b - 2 * \r) circle (0.06);

            \draw[black, thick, \arst] (- \r, \b - \r - \x) -- (- \r, \b - \r - \x +1);
            \fill[black] (- \r, \b - \r - \x) circle (0.06);
        }
        \draw[black, thick, \arst] (0,\b)     arc (90  :  0:\r);
        \draw[black, thick, \arst] (\r,\b-\r) arc (360 :270:\r);
        \draw[black, thick, \arst](0, \b-2*\r)arc (270 :180:\r);
        \draw[black, thick, \arst] (-\r,\b-\r)arc (180 : 90:\r);
        \fill[black] (0,\b) circle (0.06);
        \fill[black] (\r,\b-\r) circle (0.06);
        \fill[black] (0, \b-2*\r) circle (0.06);
        \fill[black] (-\r,\b-\r) circle (0.06);

        \end{tikzpicture}
        
        \caption{Visualization of $\T{3}{4}$ and $\T{3}{0}\times \T{0}{4}$}
        \label{FigureRetract}
    \end{shaded}
\end{figure}
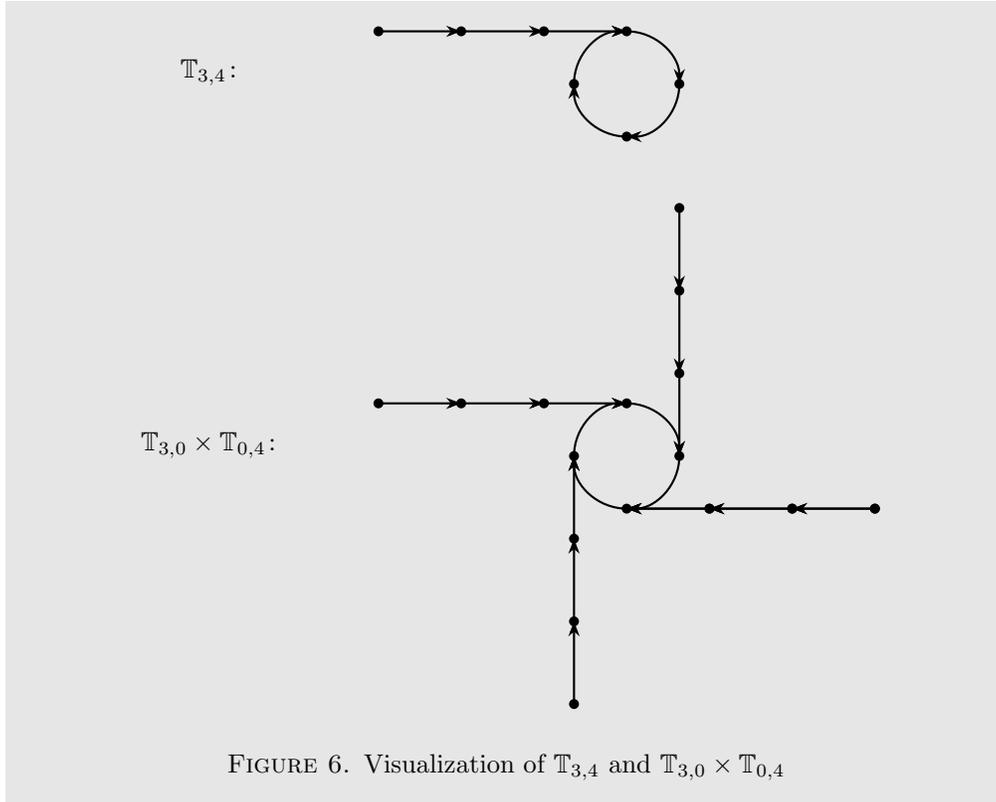
        
    \end{description}
\end{proof}

\begin{proposition}\label{PropositionOrderPreserivng}
    The correspondence $\NN \to \Gc_{\DD}$ is order-preserving.
\end{proposition}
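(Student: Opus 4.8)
The plan is to reduce the statement to checking that the two generators of the product order on $\NN$ are respected, exploiting the decomposition $\T{a}{b} \gog \T{a}{1} + \T{0}{b}$ from Lemma \ref{LemmaDecompose}. Concretely, suppose $(a,b) \leq (a',b')$ in $\NN$, meaning $a \leq a'$ in $\cN$ and $b \mid b'$ in $\cpN$. By Lemma \ref{LemmaDecompose} together with Proposition \ref{PropositionFullCoprod} (supremum is coproduct), it suffices to prove the two one-parameter statements: if $a \leq a'$ then $\T{a}{1} \ogeq \T{a'}{1}$, and if $b \mid b'$ then $\T{0}{b} \ogeq \T{0}{b'}$. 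Indeed, from these we get $\T{a}{b} \gog \T{a}{1} + \T{0}{b} \ogeq \T{a'}{1} + \T{0}{b'} \gog \T{a'}{b'}$, using that $\coprod$ is monotone for $\ogeq$ (which follows from Proposition \ref{PropositionFullCoprod}, since a supremum of a smaller family lies below the supremum of a larger one, or more directly from Lemma \ref{LemmaCoproduct}).

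For the period part, $b \mid b'$, I would exhibit $\T{0}{b} \cong (\Z/b\Z, +1)$ as a quotient of $\T{0}{b'} \cong (\Z/b'\Z, +1)$: when $b \mid b'$, the map $\Z/b'\Z \to \Z/b\Z$ is the coequalizer of $+0, +b \colon \T{0}{b'} \rightrightarrows \T{0}{b'}$ (exactly as in Example \ref{ExampleZAction}), hence $\T{0}{b} \ogeq \T{0}{b'}$; the edge cases $b=0$ (take $b'=0$, nothing to prove) and $b=1$ (terminal object, always below everything) are immediate. For the height part, $a \leq a'$ in $\cN$, I would realize $\T{a}{1}$ as a subobject of $\T{a'}{1}$ cut out by a pullback along the canonical endomorphism: $\T{a}{1}$ consists of the elements of $\T{a'}{1}$ of height at most $a$, which is the inverse image of the fixed point (the core $\T{0}{1}$) under $f^{a'-a}$ — a finite limit — so $\T{a}{1} \ogeq \T{a'}{1}$ by the same kind of pullback argument used in Lemma \ref{LemmaHeightfiltration}. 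The case $a' = \infty$ needs the filtered-colimit direction instead: $\T{\infty}{1} = \Nsucc = \sup_n \T{n}{1}$ in $\G_{\DD}$ by Lemma \ref{LemmaHeightfiltration} (or Lemma \ref{LemmaNsuccisMaximum} since $\Nsucc$ is the maximum), so $\T{a}{1} \ogeq \T{\infty}{1}$ holds trivially as everything is below the maximum; and for $a, a'$ both finite the pullback description suffices.

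The main obstacle I anticipate is not any single hard step but keeping the degenerate parameter values ($a = \infty$, $b = 0$, $b = 1$, and combinations) organized so the pullback/pushout constructions from Lemma \ref{LemmaDecompose} genuinely apply — for instance making sure that the pullback defining $\T{a}{1} \hookrightarrow \T{a'}{1}$ is taken along the correct power of $f$ and still produces the intended object when $a'$ is infinite, where one must switch to the colimit argument. A clean way to sidestep the worst of this is to note that $\T{a}{1}$ for finite $a$ is exactly the subobject $\X_a$ of $\T{\infty}{1} = \Nsucc$ in the notation of Lemma \ref{LemmaHeightfiltration}, so all the finite cases are a single instance of the pullback chain there, and the infinite case is handled by maximality. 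I expect no deep difficulty, only careful bookkeeping, and the proof should be short: cite Lemma \ref{LemmaDecompose}, Proposition \ref{PropositionFullCoprod}, and the coequalizer/pullback constructions, then dispatch the boundary cases.
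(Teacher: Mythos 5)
Your proof takes essentially the same route as the paper: reduce via Lemma~\ref{LemmaDecompose} and Proposition~\ref{PropositionFullCoprod} to the two one-parameter statements, handle the height part with Lemma~\ref{LemmaHeightfiltration}, and handle the period part with the coequalizer of $\id$ and $f^{b}$ on $\T{0}{b'}$. The decomposition and both constructions match the paper's proof step for step, so there is no genuine gap.

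One correction is worth flagging so you don't carry a false belief forward. You write $\T{\infty}{1} = \Nsucc$ and invoke Lemma~\ref{LemmaNsuccisMaximum} (maximality) to conclude the $a'=\infty$ case. This identification is false: $\T{\infty}{1}$ and $\Nsucc \gog \T{\infty}{0}$ are distinct in $\Gc_{\DD}$ --- they correspond to $(\infty,1)$ and $(\infty,0)$ under the isomorphism $\Gc_{\DD}\cong\NN$, and $(\infty,1)\og(\infty,0)$ strictly (for instance, $\T{\infty}{1}$ lies in the quotient of eventually fixed systems of Example~\ref{ExampleEventualyFixed} while $\Nsucc$ does not). The good news is that the maximality appeal is unnecessary: $\T{\infty}{1}$ has a core (its period is $1$), so Lemma~\ref{LemmaHeightfiltration} applies to it directly, giving $\T{\infty}{1}\goeq\T{a}{1}$ for every finite $a$ exactly as in the finite-$a'$ case. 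This is precisely how the paper handles it --- by citing the height filtration uniformly with no case split on $a'$. A small further slip: the preimage of the core cutting out the height-$\le a$ part of $\T{a'}{1}$ is along $f^{a}$, not $f^{a'-a}$, though this is immaterial since you also defer to the lemma itself.
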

\begin{proof}
    By Lemma \ref{LemmaDecompose}, it is enough to prove the following two statements:
    \begin{enumerate}
        \item If $a_0 \leq a_1$ in $\cN$, then $\T{a_0}{1} \ogeq \T{a_1}{1}$.
        \item If $b_0 \leq b_1$ in $\cpN$, then $\T{0}{b_0} \ogeq \T{0}{b_1}$.
    \end{enumerate}
    We prove them one by one.
        \begin{enumerate}
        \item This follows from the height filtration (Lemma \ref{LemmaHeightfiltration}). 
        

        \item Suppose $b_0 \leq b_1$ in $\cpN$, which means $b_0$ divides $b_1$.
        Then, $\T{0}{b_0}$ is constructed as a coequalizer
        \[
        \begin{tikzcd}
            \T{0}{b_1} \ar[r,shift left, "f^{b_0}"] \ar[r,shift right, "\id"'] & \T{0}{b_1} \ar[r, twoheadrightarrow] & \T{0}{b_0},
        \end{tikzcd}
        \]
        where $f$ denotes the associated endomorphism of $\T{0}{b_1}$. This construction does work even if $b_0=0$ (and thus $f^{b_0}=f^{0} = \id$).
    \end{enumerate}
\end{proof}

\subsubsection{Surjective}\label{subsubsectionSurjective}
In this part, we prove that the correspondence $\NN \to \Gc_{\DD}$ is surjective. This is the hardest part of the classification of quotients. It is essentially this part that ensures there are no omissions in our list of quotients.


The next lemma is technically convenient in that it allows us to reduce the construction of $\T{\infty}{b}$ to those of $\T{a}{b}$ for $a<\infty$.
\begin{lemma}\label{lemmaAinftySup}
    For any $b\in \cpN$, we have
    \[\T{\infty}{b} = \sup_{a<\infty} \T{a}{b}\]
    in $\G_{\DD}$.
    
\end{lemma}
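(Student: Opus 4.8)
The plan is to prove the two inequalities $\T{\infty}{b} \goeq \sup_{a<\infty}\T{a}{b}$ and $\T{\infty}{b} \ogeq \sup_{a<\infty}\T{a}{b}$ separately, where the supremum on the right is taken in $\G_{\DD}$ and exists because $\G_{\DD}$ is (small) cocomplete by Proposition \ref{PropositionFullCoprod}. By Proposition \ref{PropositionFullCoprod}, this supremum is represented by the coproduct $\coprod_{a<\infty}\T{a}{b}$, so it suffices to compare $\T{\infty}{b}$ with that coproduct in the generative order. Each $\T{a}{b}$ for $a<\infty$ has height $a$, so $\T{\infty}{b}\goeq\T{a}{b}$ already follows from Proposition \ref{PropositionOrderPreserivng} (order-preservation), which immediately gives $\T{\infty}{b}\goeq\sup_{a<\infty}\T{a}{b}$. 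Hence the real content is the reverse inequality.

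For the reverse inequality $\sup_{a<\infty}\T{a}{b}\goeq\T{\infty}{b}$, the natural approach is to exhibit $\T{\infty}{b}$ as a small colimit of a diagram built from the objects $\T{a}{b}$, $a<\infty$, since any prequotient containing all the $\T{a}{b}$ is closed under small colimits and would then contain $\T{\infty}{b}$. Concretely, there are evident inclusion morphisms $\T{a}{b}\hookrightarrow\T{a+1}{b}$ that extend the tail by one vertex (sending the loop to the loop and each tail element $i$ to $i$, which is well-defined since shifting the tail-labelling is compatible with the endofunctions), and $\T{\infty}{b}$ is the colimit of the resulting filtered chain
\[
\begin{tikzcd}
\T{0}{b}\ar[r]&\T{1}{b}\ar[r]&\T{2}{b}\ar[r]&\cdots.
\end{tikzcd}
\]
This is essentially the height filtration idea of Lemma \ref{LemmaHeightfiltration} applied to $\X=\T{\infty}{b}$: each element of $\T{\infty}{b}$ has finite height (every tail vertex $-a\le i<0$ reaches the core in $a$ steps, and core elements have height $0$), so $\T{\infty}{b}=\sup_n(\T{\infty}{b})_n$ and $(\T{\infty}{b})_n\cong\T{n}{b}$. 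One caveat: Lemma \ref{LemmaHeightfiltration} is stated for a connected system \emph{with a core}, which requires $b$ finite or $b=0$ with the core present; when $b=0$ the object $\T{\infty}{0}$ does \emph{not} have a core, so this case must be handled directly rather than by citing Lemma \ref{LemmaHeightfiltration}. But even for $b=0$ the chain above and its colimit computation go through by hand: the colimit of $\T{0}{0}\to\T{1}{0}\to\cdots$ is readily checked to be $\T{\infty}{0}$ on underlying sets and endofunctions.

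So the key steps, in order, are: (1) note $\sup_{a<\infty}\T{a}{b}$ exists and equals $\gen{\coprod_{a<\infty}\T{a}{b}}$ by Proposition \ref{PropositionFullCoprod}; (2) get $\T{\infty}{b}\goeq\T{a}{b}$ for all finite $a$ from Proposition \ref{PropositionOrderPreserivng}, hence $\T{\infty}{b}\goeq\sup_{a<\infty}\T{a}{b}$; (3) construct the filtered chain $\T{0}{b}\to\T{1}{b}\to\cdots$ of tail-extension inclusions and verify its colimit is $\T{\infty}{b}$, either by invoking Lemma \ref{LemmaHeightfiltration} when $b\neq 0$ or by a direct check when $b=0$; (4) conclude $\sup_{a<\infty}\T{a}{b}\ogeq\T{\infty}{b}$ since any prequotient containing all $\T{a}{b}$ is closed under this colimit. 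The main obstacle is really just the bookkeeping in step (3)—making sure the $b=0$ edge case (where no core exists and Lemma \ref{LemmaHeightfiltration} does not literally apply) is dispatched cleanly—since everything else is a direct consequence of results already established.
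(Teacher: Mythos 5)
Your proof is correct, and you correctly flag the one genuine subtlety: Lemma \ref{LemmaHeightfiltration} requires the existence of a core, which $\T{\infty}{0}$ lacks, so the case $b=0$ cannot simply cite that lemma. For $b\neq 0$ your argument coincides with the paper's, which also invokes Lemma \ref{LemmaHeightfiltration} (with $\X=\T{\infty}{b}$ and $\X_n\cong\T{n}{b}$). For $b=0$ you and the paper diverge: you compute the filtered colimit of the tail-extension chain $\T{0}{0}\to\T{1}{0}\to\cdots$ directly and check on underlying sets that it is $\T{\infty}{0}$, whereas the paper instead uses Lemma \ref{LemmaDecompose} to write $\T{\infty}{0}\gog\T{\infty}{1}+\T{0}{0}$, reduces to the already-settled case $b=1$, and then reassembles via $\T{a}{1}+\T{0}{0}\gog\T{a}{0}$, using that suprema in $\G_{\DD}$ are coproducts (Proposition \ref{PropositionFullCoprod}) so the binary sum commutes with the filtered supremum. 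The paper's route is slicker and stays entirely inside the generative-order algebra; yours is more elementary and self-contained, only needing that filtered colimits in $\DD$ are computed pointwise. Both are valid completions.
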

\begin{proof}
If $b\neq 0$, this is just an example of the height filtration (Lemma \ref{LemmaHeightfiltration}). By Lemma \ref{LemmaDecompose}, we also have 
\[\T{\infty}{0}= \T{\infty}{1} + \T{0}{0} = \sup_{a<\infty} (\T{a}{1} + \T{0}{0}) = \sup_{a<\infty} \T{a}{0}.\]
\end{proof}


\begin{proposition}\label{PropositionSurjection}
    For a connected discrete dynamical system $\X=(X,f)$ with height $a$ and period $b$, we have 
    \[\X \gog \T{a}{b}.\]
    In particular, the correspondence $\NN \to \Gc_{\DD}$ is surjective.
\end{proposition}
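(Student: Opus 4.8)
The plan is to show that a connected $\X$ is determined, up to generative equivalence, by its height $a$ and period $b$, by proving both $\X\goeq\T{a}{b}$ and $\X\ogeq\T{a}{b}$. The argument splits according to whether $\X$ has a core. If $\X$ has a core and $a=\infty$, Lemma~\ref{LemmaHeightfiltration} gives $\X=\sup_{n}\X_{n}$ with each $\X_{n}$ connected of period $b$ and finite height $n$, and Lemma~\ref{lemmaAinftySup} identifies $\sup_{n}\T{n}{b}$ with $\T{\infty}{b}$; so once the finite‑height case is settled the infinite‑height‑with‑core case follows by passing to suprema. Hence the real work is two cases: (A) $\X$ connected with a core, period $b$, finite height $a$ (so $\X=\X_{a}$ and some element has height exactly $a$); and (B) $\X$ with no core, which forces $b=0$ and $a=\infty$. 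Throughout I will use Lemma~\ref{LemmaDecompose}, $\T{a}{b}\gog\T{a}{1}+\T{0}{b}$, to separate a ``height part'' and a ``period part''.

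For the direction $\X\goeq\T{a}{b}$ in case (A): the period part is immediate, since $\T{0}{b}\in\gen{\X}$ by Lemma~\ref{LemmaEventualImage}. For the height part I would collapse the core: let $\X'$ be the pushout of $\1\leftarrow C\hookrightarrow\X$, where $C\cong\T{0}{b}$ is the core. As $\1$ and $\T{0}{b}$ both lie in $\gen{\X}$, so does $\X'$. Now $\X'$ is connected, every element reaches the new fixed point $*$ within $a$ steps, and the forward orbit of a height‑$a$ element is a sub‑system isomorphic to $\T{a}{1}$ which is a \emph{retract} of $\X'$ (retract an element of height $h$ to the tail‑point of height $h$); Lemma~\ref{LemmaRetract} gives $\T{a}{1}\in\gen{\X'}\subseteq\gen{\X}$. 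Combining with Lemma~\ref{LemmaDecompose} and Proposition~\ref{PropositionFullCoprod} yields $\T{a}{b}\in\gen{\X}$.

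For the direction $\X\ogeq\T{a}{b}$ in case (A) — the step that actually guarantees the list of quotients has no omissions — the plan is induction along the height filtration, showing $\X_{n}\in\gen{\T{a}{b}}$ for $0\leq n\leq a$. The base $\X_{0}=C\cong\T{0}{b}\in\gen{\T{a}{b}}$ holds by order‑preservation (Proposition~\ref{PropositionOrderPreserivng}). For the step I claim $\X_{n+1}$ is the pushout of $\coprod_{y}\T{n}{b}\to\coprod_{y}\T{n+1}{b}$ and $\coprod_{y}\T{n}{b}\to\X_{n}$, the coproducts running over the (small) set of elements $y$ of height $n+1$: the first leg is the inclusion $\T{n}{b}\hookrightarrow\T{n+1}{b}$, and the second sends the $y$‑th copy isomorphically onto the sub‑system of $\X_{n}$ given by the forward orbit of $f(y)$ together with the core, wrapping the loop (or $\Z$) of $\T{n}{b}$ onto $C$ by using that $f$ is bijective on $C$ — this works uniformly for $b>0$ and $b=0$. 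Since colimits in $\DD$ are computed on underlying sets, one checks this pushout introduces exactly the new height‑$(n+1)$ elements, i.e. equals $\X_{n+1}$; and as $n+1\leq a$ both $\T{n}{b},\T{n+1}{b}\in\gen{\T{a}{b}}$, so the pushout is too. Thus $\X=\X_{a}\in\gen{\T{a}{b}}$, and combined with the previous paragraph $\X\gog\T{a}{b}$; the infinite‑height‑with‑core case then follows by suprema as noted above. I expect this pushout‑tower identification to be the main obstacle: it is the only place where the combinatorial structure of a general connected system has to be dismantled explicitly.

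Finally, case (B): here $\T{a}{b}=\T{\infty}{0}\gog\Nsucc$ is the maximum of $\G_{\DD}$ (Lemma~\ref{LemmaNsuccisMaximum}), so $\X\ogeq\T{\infty}{0}$ is automatic and only $\Nsucc\in\gen{\X}$ remains. The underlying graph of a connected period‑$0$ system is a tree, and the canonical map $\X\to\colim(\X\xrightarrow{f}\X\xrightarrow{f}\cdots)\cong(\Z,+1)$ (Lemma~\ref{LemmaEventualImage}) is an integer grading $\rho$ with $\rho\circ f=\rho+1$, whose image is an interval of $\Z$ unbounded above. If $\rho$ is bounded below, a shift turns it into a surjection $\X\twoheadrightarrow\Nsucc$, so $\Nsucc\in\gen{\X}$. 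If $\rho$ is unbounded below, pick a predecessor‑free element $x_{0}$ with forward orbit $O$; then $x\mapsto\min\{k:f^{k}(x)\in O\}$ is a well‑defined and, because $\rho$ is unbounded below, \emph{surjective} morphism $\X\twoheadrightarrow\T{\infty}{1}$, so $\T{\infty}{1}\in\gen{\X}$, and with $(\Z,+1)=\T{0}{0}\in\gen{\X}$ and Lemma~\ref{LemmaDecompose} we get $\Nsucc\gog\T{\infty}{1}+\T{0}{0}\in\gen{\X}$. This last case is a secondary subtlety — one must notice that such an $\X$ need not retract onto any copy of $\Nsucc$, so the quotient (not retract) constructions above are essential. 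In every case $\X\gog\T{a}{b}$, so $\NN\to\Gc_{\DD}$ is surjective.
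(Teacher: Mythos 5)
Your treatment of the case where $\X$ has a core is correct, and in places more explicit than the paper's. For the direction $\X\goeq\T{a}{b}$ (finite height), collapsing the core to obtain $\X'$ and then retracting $\X'$ onto $\T{a}{1}$ cleanly sidesteps the somewhat delicate verification that $\T{a}{b}$ itself is a retract of $\X$, which is what the paper asserts without detail. For the direction $\T{a}{b}\goeq\X$, your pushout tower along the height filtration is a concrete replacement for the paper's appeal to $\X\cong\colim_{x\in X}\T{a}{b}$ via monoid theory; I checked that the pushout of $\coprod_{y}\T{n}{b}\to\coprod_{y}\T{n+1}{b}$ against $\coprod_{y}\T{n}{b}\to\X_{n}$ does compute $\X_{n+1}$ (the left leg is mono, so the new elements are exactly one per $y$ of height $n+1$, glued onto $f(y)$). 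The reduction of the infinite-height-with-core case via Lemmas \ref{LemmaHeightfiltration} and \ref{lemmaAinftySup} matches the paper.

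Your case (B), however, has two genuine gaps, and this is precisely the part the paper itself flags as ``the non-trivial part.'' First, the step ``a surjection $\X\twoheadrightarrow Y$ implies $Y\in\gen{\X}$'' is not valid: prequotients are closed under colimits of diagrams whose objects already lie in the prequotient, and the kernel-pair presentation $\X\times_{Y}\X\rightrightarrows\X\to Y$ requires $Y$ to form. A concrete failure: $\T{1}{0}$ surjects onto $\T{\infty}{1}$ (collapse $-1\sim 0_{\Z}$), yet $\T{1}{0}$ lies in the eventual-bijection quotient of Proposition \ref{PropositionEventualBijConstruction} with $a=1$ and $\T{\infty}{1}$ does not. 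In your $\rho$-bounded-below sub-case the conclusion can be rescued because $\rho-\min\rho\colon\X\to\Nsucc$ is in fact split by $n\mapsto f^{n}(x_{\min})$, so $\Nsucc$ is a retract and Lemma \ref{LemmaRetract} applies; but in the $\rho$-unbounded sub-case $\T{\infty}{1}$ has a fixed point while $\X$ (period $0$) has none, so $\T{\infty}{1}$ cannot even be a subobject of $\X$, let alone a retract, and no quick repair is available. Second, ``pick a predecessor-free element $x_{0}$'' is not always possible: when every element of $\X$ has a predecessor (which, with no core, forces several copies of $(\Z,+1)$ inside $\X$), your argument has nothing to start from. The paper handles exactly these two difficulties: multiple $(\Z,+1)$-subobjects are treated by the equalizer of two embeddings $\T{0}{0}\rightrightarrows\X$, and the well-founded case is treated by the $\w$-function together with the explicit equalizer/pushout/retract construction of $\X'$ and $\Y$, which is where most of the work in the paper's proof lies and which your sketch does not replace.
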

\begin{proof}
We divide our proof into several parts.
    \begin{description}
        \item[If $a$ is finite]\hspace{1pt}\\
        Suppose the height $a$ is finite. In this case, considering the element with the maximum height $a$, we can prove that $\T{a}{b}$ is a retract of $\X$. Therefore, $\X \goeq \T{a}{b}$ by Lemma \ref{LemmaRetract}. 
        
        In the other direction, we prove $\T{a}{b}\goeq \X$. Actually, $\X$ is a colimit of $\T{a}{b}$.
        \[
        \X \cong \colim_{x\in X} \T{a}{b}
        \]
        Although a direct proof is not very hard, there is a much easier way to prove this, using the monoid theoretic background (subsection \ref{SubsectionEssentialQuotients}). 

        \item[If $\X$ has a core]\hspace{1pt}\\
        This case is reduced to the above case by the height filtration (Lemma \ref{LemmaHeightfiltration}) and Lemma \ref{lemmaAinftySup}. In detail, by adopting the notation in the lemma, we obtain
        \[
        \X = \sup_{n\in \N} \X_{n} = \sup_{n \in \N} \T{\min(n,a)}{b} = \T{a}{b}
        \]
        in the poset $\G_{\DD}$.

        \item[If $\X$ does not have a core]\hspace{1pt}\\
         Since $\T{\infty}{0} \gog \Nsucc$ is the maximum element (Lemma \ref{LemmaNsuccisMaximum}), $\X \ogeq \T{\infty}{0}$ is easy. We prove the other direction $\X \goeq \T{\infty}{0}$. 
        
        If there exist two or more subobjects that are isomorphic to $\T{0}{0}  \cong (\Z, +1)$,
        then there exist two morphisms $\T{0}{0} \rightrightarrows \X$ such that their equalizer is $\Nsucc \subset (\Z,+1) \cong \T{0}{0}$. 
        Combining with $\X \goeq \T{0}{0}$ (Lemma \ref{LemmaEventualImage}), we obtain $\X\goeq \Nsucc \simeq \T{\infty}{0}$. 

        What remains is the case where there is no morphism from $\T{0}{0}\cong (\Z, +1)$ to $\X$. This is the non-trivial part! Until the end of the proof, we assume $\X$ satisfies that condition. In other words, we assume
        that there are no \dq{going back} infinite sequences
        \[x\mapsfrom x'\mapsfrom x''\mapsfrom x'''\mapsfrom \cdots.\]
        
        Under this \dq{well-foundedness} assumption, we can recursively define a function $\w\colon X \to \cN$ by 
        \[
        \w(x) =\sup\{\w(y) +1\mid y\in f^{-1}(x)\} .
        \]
        For example, if $f^{-1}(x) = \emptyset$, then $\w(x) = \sup \emptyset = 0$. 
        Intuitively, $\w(x)$ represents the height of the subtree above $x$
        (see Figure \ref{PictureWeight}). 
        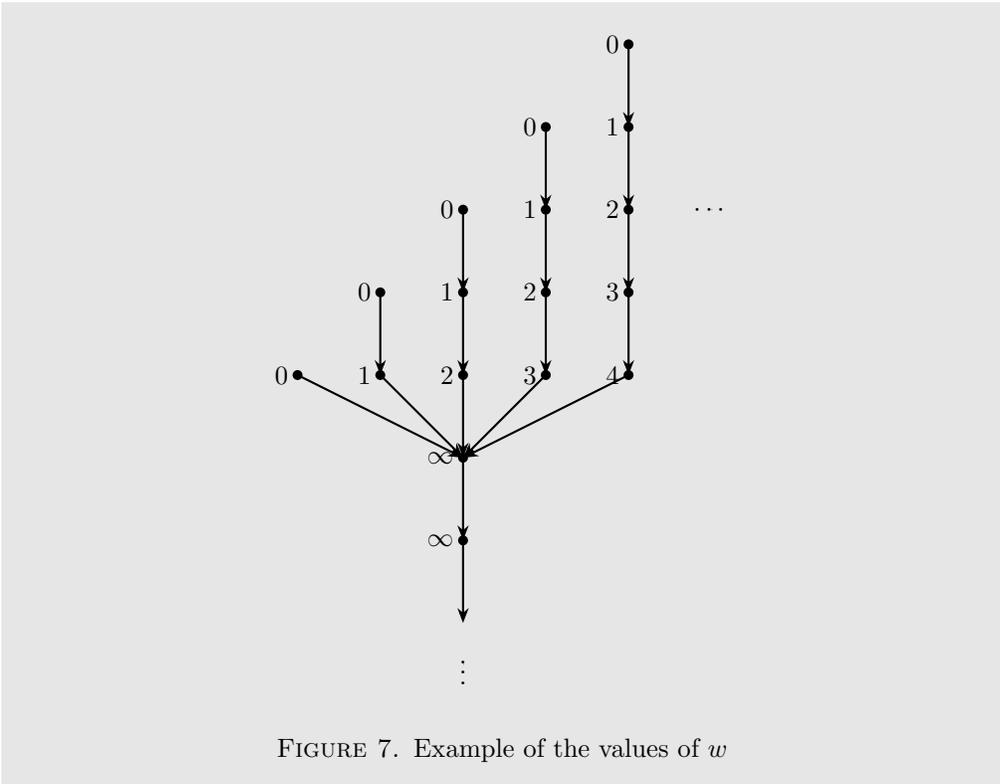
\begin{figure}[ht]
    \begin{shaded}
    \centering
        \begin{tikzpicture} [scale = \scvalue]
            \fill[black] (0,0) circle (0.06) node[left]{$0$};
            \draw[black, thick,\arst](0,0)--(2,-1);
            
            \fill[black] (1,0) circle (0.06) node[left]{$1$};
            \draw[black, thick,\arst](1,0)--(2,-1);
            \fill[black] (1,1) circle (0.06) node[left]{$0$};
            \draw[black, thick,\arst](1,1)--(1,0);
            
            \fill[black] (2,0) circle (0.06) node[left]{$2$};
            \draw[black, thick,\arst](2,0)--(2,-1);
            \fill[black] (2,1) circle (0.06) node[left]{$1$};
            \draw[black, thick,\arst](2,1)--(2,0);
            \fill[black] (2,2) circle (0.06) node[left]{$0$};
            \draw[black, thick,\arst](2,2)--(2,1);
            
            \fill[black] (3,0) circle (0.06) node[left]{$3$};
            \draw[black, thick,\arst](3,0)--(2,-1);
            \fill[black] (3,1) circle (0.06) node[left]{$2$};
            \draw[black, thick,\arst](3,1)--(3,0);
            \fill[black] (3,2) circle (0.06) node[left]{$1$};
            \draw[black, thick,\arst](3,2)--(3,1);
            \fill[black] (3,3) circle (0.06) node[left]{$0$};
            \draw[black, thick,\arst](3,3)--(3,2);
            
            \fill[black] (4,0) circle (0.06) node[left]{$4$};
            \draw[black, thick,\arst](4,0)--(2,-1);
            \fill[black] (4,1) circle (0.06) node[left]{$3$};
            \draw[black, thick,\arst](4,1)--(4,0);
            \fill[black] (4,2) circle (0.06) node[left]{$2$};
            \draw[black, thick,\arst](4,2)--(4,1);
            \fill[black] (4,3) circle (0.06) node[left]{$1$};
            \draw[black, thick,\arst](4,3)--(4,2);
            \fill[black] (4,4) circle (0.06) node[left]{$0$};
            \draw[black, thick,\arst](4,4)--(4,3);

            \fill[black] (2,-1) circle (0.06) node[left]{$\infty$};
            \draw[black, thick,\arst](2,-1)--(2,-2);
            \fill[black] (2,-2) circle (0.06) node[left]{$\infty$};
            \draw[black, thick,\arst](2,-2)--(2,-3);
            \fill[black] (2,-3.5) circle (0) node[]{$\vdots$};

            \fill[black] (5,2) circle (0) node[]{$\cdots$};
        \end{tikzpicture} 
    \caption{Example of the values of $\w$}
    \label{PictureWeight}
    \end{shaded}
\end{figure}
        
        A first observation on $\w$ is that 
        $\Image{\w}\cap \N$ is unbounded in $\N$.
        If $\infty \in \Image{\w}$, then using well-foundedness we can find an element $x\in \w^{-1}(\infty)$ such that $\{\w(y)\mid y\in f^{-1}(x)\}\subset \N$. 
        Then, $\{\w(y)\mid y\in f^{-1}(x)\}\subset \N$ is unbounded.
        If $\infty \notin \Image{\w}$, taking an arbitrary element $x\in X$, we obtain a strictly increasing sequence of natural numbers
        \[\w(x)< \w(f(x))< \w(f^2 (x))< \cdots ,\]
        which witnesses that $\Image{\w}\cap \N$ is unbounded.

        Next, we prove $\X \goeq \T{n}{0}$ for an arbitrary natural number $n\in \N$. 
        Since $\Image{\w}\cap \N$ is unbounded in $\N$, we can take $x\in X$ such that $n\leq \w(x) < \infty$.
        If $\w(f^{k+1} (x))= \w(f^{k}(x))+1$ for every $k\in \N$ (in particular $\Image{\w}\subset \N$), there is a split monomorphism $i\colon \Nsucc \to \X$ such that $i(\w(x))=x$. 
        \invmemo{write more}
        Then, in such a case, $\X \goeq \Nsucc \goeq \T{n}{0}$. If it is not the case (i.e., $\w(f^{k+1} (x))> \w(f^{k}(x))+1$ for some $k\in \N$), replacing $x$ with $f^{k}(x)$, we can assume $\w(f(x))>\w(x)+1$. 

        \begin{figure}[ht]
    \begin{shaded}
        \centering
        \begin{tikzpicture} [scale = \scvalue]
        \def \a{0}
        \def \b{0}
        \def \c{1}
        \def \d{1}
        \def \h{3.5}
        \def \s{0.333}
        
        \node at (\a+0.5, \h) {$\X$};
        \fill[black] (\a,\b) circle (0.06) node[left] {$x$};
        \fill[black] (\a-\s,\b+1) circle (0.06);
        \draw[black, thick, \arst] (\a-\s,\b+1) -- (\a,\b);
        \fill[black] (\a+\s,\b+1) circle (0.06);
        \draw[black, thick, \arst] (\a+\s,\b+1) -- (\a,\b);
        \fill[black] (\a+\s,\b+2) circle (0.06);
        \draw[black, thick, \arst] (\a+\s,\b+2) -- (\a+\s,\b+1);
        \fill[black] (\a+\c,\b+\d) circle (0.06);
        \draw[black, thick, \arst] (\a+\c,\b+\d) -- (\a+\c,\b);
        \fill[black] (\a+\c,\b) circle (0.06) node[right] {$y$};
        \fill[black] (\a-\s+\c,\b+\d+1) circle (0.06);
        \draw[black, thick, \arst] (\a-\s+\c,\b+\d+1) -- (\a+\c,\b+\d);
        \fill[black] (\a+\s+\c,\b+\d+1) circle (0.06);
        \draw[black, thick, \arst] (\a+\s+\c,\b+\d+1) -- (\a+\c,\b+\d);
        \fill[black] (\a+\s+\c,\b+\d+2) circle (0.06);
        \draw[black, thick, \arst] (\a+\s+\c,\b+\d+2) -- (\a+\s+\c,\b+\d+1);
        \fill[black] (\a+0.5,\b-1) circle (0.06);
        \draw[black, thick] (\a+0.5,\b-1) -- (\a+0.5,\b-1.5);
        \node at (\a+0.5,\b-2) {$\vdots$};
        \draw[black, thick, \arst] (\a+\c,\b) -- (\a+0.5,\b-1);
        \draw[black, thick, \arst] (\a,\b) -- (\a+0.5,\b-1);

        \def \sa{3.5}
        \node at (\sa, \h) {$S_x$};
        \fill[black] (\sa,\b) circle (0.06) node[left] {$x$};
        \fill[black] (\sa-\s,\b+1) circle (0.06);
        \draw[black, thick, \arst] (\sa-\s,\b+1) -- (\sa,\b);
        \fill[black] (\sa+\s,\b+1) circle (0.06);
        \draw[black, thick, \arst] (\sa+\s,\b+1) -- (\sa,\b);
        \fill[black] (\sa+\s,\b+2) circle (0.06);
        \draw[black, thick, \arst] (\sa+\s,\b+2) -- (\sa+\s,\b+1);

        \def \da{4.5}
        \node at (\da+1, \h) {$\X'$};
        \fill[black] (\da+\c,\b+\d) circle (0.06);
        \draw[black, thick, \arst] (\da+\c,\b+\d) -- (\da+\c,\b);
        \fill[black] (\da+\c,\b) circle (0.06) node[right] {$y$};
        \fill[black] (\da-\s+\c,\b+\d+1) circle (0.06);
        \draw[black, thick, \arst] (\da-\s+\c,\b+\d+1) -- (\da+\c,\b+\d);
        \fill[black] (\da+\s+\c,\b+\d+1) circle (0.06);
        \draw[black, thick, \arst] (\da+\s+\c,\b+\d+1) -- (\da+\c,\b+\d);
        \fill[black] (\da+\s+\c,\b+\d+2) circle (0.06);
        \draw[black, thick, \arst] (\da+\s+\c,\b+\d+2) -- (\da+\s+\c,\b+\d+1);
        \fill[black] (\da+0.5,\b-1) circle (0.06);
        \draw[black, thick] (\da+0.5,\b-1) -- (\da+0.5,\b-1.5);
        \node at (\da+0.5,\b-2) {$\vdots$};
        \draw[black, thick, \arst] (\da+\c,\b) -- (\da+0.5,\b-1);

        \def \ya{8}
        \node at (\ya +0.5, \h) {$\Y$};
        \fill[black] (\ya,\b) circle (0.06) node[left] {$x$};
        \fill[black] (\ya-\s,\b+1) circle (0.06);
        \draw[black, thick, \arst] (\ya-\s,\b+1) -- (\ya,\b);
        \fill[black] (\ya+\s,\b+1) circle (0.06);
        \draw[black, thick, \arst] (\ya+\s,\b+1) -- (\ya,\b);
        \fill[black] (\ya+\s,\b+2) circle (0.06);
        \draw[black, thick, \arst] (\ya+\s,\b+2) -- (\ya+\s,\b+1);

        \foreach \i in {-1, 0, 1}{
            \draw[black, thick, \arst] (\ya+1,\i+1) -- (\ya+1,\i);
        }
        \draw[black, thick] (\ya+1,-1) -- (\ya+1,-1.5);
        \node at (\ya+1,-2) {$\vdots$};
        \draw[black, thick, \arst] (\ya+1,2.5) -- (\ya+1,2);
        \node at (\ya+1,3) {$\vdots$};
        \foreach \i in {-1, 0, 1, 2}{
            \fill[black] (\ya+1,\i) circle (0.06);
        }
        \draw[black, thick, \arst] (\ya,0) -- (\ya+1,-1);

        \end{tikzpicture}
        
        \caption{Visualized example of $\X,\ S_{x},\ \X'$ and $\Y$}
        \label{FigureXY}
    \end{shaded}
\end{figure}
        For the following construction, see Figure \ref{FigureXY}. Let $S_x$ denote the subset of $X$ defined by
        \[S_x =\{x'\in X\mid \exists n\in \N,\ f^{n}(x')=x\},\]
        and $\X'$ denotes the subobject of $\X$ whose underlying set is $X\setminus S_x$.
        In other words, $S_x$ is the subset of all nodes above $x$, and $\X'$ is the maximum subobject that does not contain $x$.
        Then, we can prove $\X \goeq \X'$. In fact, by definition of $x$, $w(f(x))>w(x)+1$, so there is another element $y\neq x$ such that $f(x)=f(y)$ and $\w(x)< \w(y)$. Then, since $w(x)$ and $w(y)$ are the heights of subtree above $x$ and $y$, there is an endomorphism $g\colon \X \to \X$ such that $g(x)=y$ and the following diagram 
        \[
        \begin{tikzcd}
            \X'\ar[r,rightarrowtail]&\X\ar[r,"g",shift left]\ar[r,"\id_{\X}"',shift right]&\X
        \end{tikzcd}
        \]
        is an equalizer diagram, which proves $\X \goeq \X'$. 
        (The endomorphism $g$ can be constructed as a function that keeps $\X'$ as it is and maps $S_x$ to the longest path over $y$.)
        
        Then, we can construct the following pushout diagram
         \[
        \begin{tikzcd}
            \X'\ar[r]\ar[d]&\T{0}{0}\ar[d]\\
            \X\ar[r]&\Y,\arrow[lu, phantom, "\rotatebox{180}{$\lrcorner$}", very near start]
        \end{tikzcd}
        \]
        where $\X'\to \X$ is the canonical inclusion and $\X'\to \T{0}{0}$ is the essentially unique morphism. Since $\X\goeq \X', \T{0}{0}$, we have $\X\goeq \Y$. Furthermore, $\T{\w(x)}{0}$ is a retract of $\Y$. This follows from the concrete calculation of the pushout:
        the underlying set of $\Y$ is given by $\Z \coprod S_x$, and the associated endofunction sends $x\in S_x$ to $0\in \Z$.
        Then, by Lemma \ref{LemmaRetract} and Proposition \ref{PropositionOrderPreserivng}, we have 
        \[\X \goeq \Y \goeq \T{\w(x)}{0}\goeq \T{n}{0}.\]
        Since we took $n$ arbitrarily,  by Lemma \ref{lemmaAinftySup}, we have $\X\goeq \T{\infty}{0}$.
    \end{description}
\end{proof}
\invmemo{On recursive coalgebra}

\subsubsection{Embedding}\label{subsubsectionEmbedding}
At last, we prove that the correspondence is an embedding and hence, an isomorphism. In this part, we use two constructions of quotients: Proposition \ref{PropositionConstructionPrimeQuotients} and Proposition \ref{PropositionEventualBijConstruction}.
\begin{proposition}\label{PropositionConnectedIsomorphic}
    The correspondence $ \NN \to \Gc_{\DD}$ is an isomorphism of posets.
\end{proposition}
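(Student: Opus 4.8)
The plan is to combine the two parts already proved --- that the correspondence $\NN\to\Gc_{\DD}$ is order-preserving (Proposition \ref{PropositionOrderPreserivng}) and surjective (Proposition \ref{PropositionSurjection}) --- with the one remaining ingredient, namely that the correspondence \emph{reflects} the order:
\[
\T{a_0}{b_0}\ogeq\T{a_1}{b_1}\ \Longrightarrow\ (a_0,b_0)\leq(a_1,b_1)\ \text{in}\ \NN=\cN\times\cpN .
\]
Granting this together with order-preservation yields the equivalence $(a_0,b_0)\leq(a_1,b_1)\iff\T{a_0}{b_0}\ogeq\T{a_1}{b_1}$, and with surjectivity the map is then a poset isomorphism (injectivity is just the two-sided instance of order-reflection). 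I would prove order-reflection by contraposition: given $(a_0,b_0)\not\leq(a_1,b_1)$, I would exhibit a quotient $Q$ of $\DD$ with $\T{a_1}{b_1}\in Q$ but $\T{a_0}{b_0}\notin Q$. This suffices, because then $\gen{\T{a_1}{b_1}}\subseteq Q$ while $\T{a_0}{b_0}\notin Q$, forcing $\gen{\T{a_0}{b_0}}\not\subseteq\gen{\T{a_1}{b_1}}$, i.e.\ $\T{a_0}{b_0}\not\ogeq\T{a_1}{b_1}$. Since $\NN$ carries the product order, $(a_0,b_0)\not\leq(a_1,b_1)$ means $a_0>a_1$ in $\cN$ or $b_0\nmid b_1$ in $\cpN$, and I would use whichever separation is available.

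If $a_0>a_1$, then $a_1$ is not the top element $\infty$ of $\cN$, so $a_1\in\N$, and I would take $Q$ to be the quotient of eventual bijections with parameter $a_1$ (Proposition \ref{PropositionEventualBijConstruction}); by Proposition \ref{PropositionEventualBijIntermsOfHeight} this $Q$ is exactly the class of discrete dynamical systems all of whose connected components have height at most $a_1$, so it contains the height-$a_1$ system $\T{a_1}{b_1}$ but not the height-$a_0$ system $\T{a_0}{b_0}$. If instead $b_0\nmid b_1$, then --- since $0$ is the top of $\cpN$ --- necessarily $b_1\neq0$, so $b_1$ is a positive integer; I would take $Q$ to be the prime-number quotient (Proposition \ref{PropositionConstructionPrimeQuotients}) for the function $\alpha\colon\cP\to\cN$ that sends each prime $p$ to the number of times $p$ divides $b_1$ (finite, as $b_1\neq0$) and sends $\infty$ to $\infty$. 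Every element of $\T{a_1}{b_1}$ has period $b_1\neq0$, which each prime $p$ divides exactly $\alpha(p)$ times, and the height constraint is vacuous, so $\T{a_1}{b_1}\in Q$. On the other hand every element of $\T{a_0}{b_0}$ has period $b_0$: if $b_0=0$ the period is zero and the first condition fails; if $b_0\neq0$, then $b_0\nmid b_1$ supplies a prime $p$ dividing $b_0$ more than $\alpha(p)$ times, so the first condition fails again. Either way $\T{a_0}{b_0}\notin Q$, completing both cases.

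I do not expect a genuine obstacle here; the work is all bookkeeping at the boundary values. The two points to watch are that $b_0\nmid b_1$ already excludes $b_1=0$ --- so that $\alpha$ is well defined with finite values on primes --- and that the prime-number construction of Proposition \ref{PropositionConstructionPrimeQuotients} insists on \emph{nonzero} periods, which is precisely the clause that separates the period-$0$ systems $\T{a}{0}$ from the rest. The genuinely hard part of the classification, ensuring nothing is missing (surjectivity), is already Proposition \ref{PropositionSurjection}; the present proposition only needs to certify that nothing is over-counted, and the two explicit quotient constructions do this directly.
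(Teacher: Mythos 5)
Your proposal is correct and uses essentially the same approach as the paper: it establishes order-reflection by exhibiting the two separating quotients — the eventual-bijections quotient (Proposition \ref{PropositionEventualBijConstruction} via Proposition \ref{PropositionEventualBijIntermsOfHeight}) to separate by height, and the prime-number quotient (Proposition \ref{PropositionConstructionPrimeQuotients}) to separate by period — and handles the boundary cases $a=\infty$, $b=0$ identically. The only difference is cosmetic: the paper argues the direct implication (assuming $\T{a}{b}\goeq\T{a'}{b'}$, it reads off $a\geq a'$ and $b\geq b'$ from membership in each quotient), while you argue by contraposition; these are the same argument.
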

\begin{proof}
    So far, we have proven that it is an order-preserving surjection. 
    Now, what we should prove is that it is order-reflecting.

    Suppose $\T{a}{b} \goeq \T{a'}{b'}$. We will prove $a\geq a'$ in $\cN$ and $b\geq b'$ in $\cpN$. 
    \begin{description}
        \item[$a\geq a'$ in $\cN$] 
        If $a=\infty$, then the statement is trivial. We assume $a<\infty$ and let $Q$ be the quotient constructed in Proposition \ref{PropositionEventualBijConstruction}. Then, $Q$ contains $\T{a}{b}$ and hence $\T{a'}{b'}$ as well. This implies $a\geq a'$.
        
        \item[$b\geq b'$ in $\cpN$]
        If $b=0$, then the statement is trivial. We assume $b\neq 0$ and let $Q$ be the quotient constructed by Proposition \ref{PropositionConstructionPrimeQuotients} and $\alpha \colon \cP \to \cN$ that sends $\infty \in \cP$ to $\infty \in \cN$ and a (finite) prime number $p\in \cP$ to the number of times $p$ divides $b$. Then, $Q$ contains $\T{a}{b}$ and hence $\T{a'}{b'}$ as well. This implies $b\geq b'$ in $\cpN$.
        

    \end{description}
\end{proof}

\subsection{Main theorem}\label{subsectionMainTheorem}
So far, we have completely determined the structure of $\Gc_{\DD} $. By Proposition \ref{PropositionChainOfEmbeddings}, all that remains is to determine when the downward closed set of $\Gc_{\DD} $ defines a quotient of $\DD$.

\begin{lemma}\label{LemmaSuppreserving}
    The embedding function
    \[\NN \cong \Gc_{\DD} \to \G_{\DD} \cong \PQ_{\DD}\]
    preserves finite supremums.
\end{lemma}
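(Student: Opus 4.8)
The plan is to use the standard fact that a monotone map preserves all finite suprema as soon as it preserves the least element and binary suprema (a finite supremum being an iterated binary one, or the least element if the family is empty). Composing with the two isomorphisms in the statement, it is enough to show that the inclusion $\Gc_{\DD}\hookrightarrow\G_{\DD}$, equivalently the composite $\NN\to\G_{\DD}$, has these two properties. For the least element: the bottom of $\NN=\cN\times\cpN$ is $(0,1)$, since $0$ is least in $\cN$ and $1$ is least in $\cpN$, and its image $\T{0}{1}$ is a single point carrying the identity, i.e.\ the terminal object $\1$, which is the minimum of $\G_{\DD}$ by Example \ref{ExampleMinimumElement}. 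So the empty supremum is preserved.

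For binary suprema, fix $(a,b),(a',b')\in\NN$ and set $a''=\max(a,a')$ in $\cN$ and $b''=\Lcm(b,b')$ in $\cpN$, so that $(a'',b'')$ is their supremum in $\NN$. By Proposition \ref{PropositionFullCoprod} the supremum of $\T{a}{b}$ and $\T{a'}{b'}$ in $\G_{\DD}$ is the coproduct $\T{a}{b}+\T{a'}{b'}$, so the goal is
\[
\T{a}{b}+\T{a'}{b'}\gog\T{a''}{b''}\qquad\text{in }\G_{\DD}.
\]
I would prove this by separating the ``height'' and ``period'' contributions via Lemma \ref{LemmaDecompose}. Since a coproduct is a supremum in $\G_{\DD}$ (Proposition \ref{PropositionFullCoprod}), it depends only on the $\gog$-classes of its summands; applying Lemma \ref{LemmaDecompose} to each summand therefore gives
\[
\T{a}{b}+\T{a'}{b'}\gog\bigl(\T{a}{1}+\T{a'}{1}\bigr)+\bigl(\T{0}{b}+\T{0}{b'}\bigr).
\]

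It then remains to compute the two brackets. For the first: $(a,1)$ and $(a',1)$ are comparable in $\NN$, so by Proposition \ref{PropositionOrderPreserivng} the objects $\T{a}{1},\T{a'}{1}$ are comparable in $\G_{\DD}$, hence their supremum is the larger one, namely $\T{a''}{1}$. For the second: from $b\mid b''$ and $b'\mid b''$ together with Proposition \ref{PropositionOrderPreserivng} we get $\T{0}{b}\ogeq\T{0}{b''}$ and $\T{0}{b'}\ogeq\T{0}{b''}$, so $\T{0}{b''}$ dominates both and hence $\T{0}{b}+\T{0}{b'}\ogeq\T{0}{b''}$; conversely, any prequotient containing $\T{0}{b}+\T{0}{b'}$ contains each summand (extensivity, as in Lemma \ref{LemmaCoproduct}) and therefore the product $\T{0}{b}\times\T{0}{b'}$, which — by the same computation as for the $\Z$-sets $C_b,C_{b'}$ in Example \ref{ExampleZAction} — is a non-empty coproduct of copies of $\T{0}{b''}$, so $\T{0}{b}+\T{0}{b'}\goeq\T{0}{b}\times\T{0}{b'}\gog\T{0}{b''}$ by Proposition \ref{PropositionFullCoprod}. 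Thus $\T{0}{b}+\T{0}{b'}\gog\T{0}{b''}$. Substituting both brackets and invoking Lemma \ref{LemmaDecompose} once more in the reverse direction yields
\[
\T{a}{b}+\T{a'}{b'}\gog\T{a''}{1}+\T{0}{b''}\gog\T{a''}{b''},
\]
which finishes the proof.

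I do not expect a serious obstacle: all the substance is already contained in Lemma \ref{LemmaDecompose} (the height/period decomposition) and in the elementary product formula for the loops $\T{0}{b}$ recorded in Example \ref{ExampleZAction}. The only points needing care are the degenerate values $a,a'=\infty$ and $b,b'=0$, but Lemma \ref{LemmaDecompose} and Proposition \ref{PropositionOrderPreserivng} were stated so as to include them, and the coproduct decomposition of $\T{0}{0}\times\T{0}{0}$, while countably infinite, is still non-empty, so the appeal to Proposition \ref{PropositionFullCoprod} (the supremum of a constant family equals its common value) goes through unchanged.
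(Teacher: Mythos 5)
Your proof is correct and follows essentially the same route as the paper: you decompose each $\T{a}{b}$ via Lemma \ref{LemmaDecompose}, reduce to the height factor (where comparability settles it) and the period factor (where the product computation $\T{0}{b}\times\T{0}{b'}\cong\coprod\T{0}{\Lcm(b,b')}$ gives $\T{0}{b}+\T{0}{b'}\goeq\T{0}{\Lcm(b,b')}$), and note that $\T{0}{1}$ is the minimum. The paper states these same two "remaining relations" without spelling out the surrounding reduction, which you carry out explicitly; the substance is identical.
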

\begin{proof}
    By Lemma \ref{LemmaDecompose} and Proposition \ref{PropositionOrderPreserivng}, the proof is already almost done, and the remaining relations we have to prove are
    \begin{itemize}
        \item $\T{0}{1}$ is minimum, and
        \item $\T{0}{b}+ \T{0}{b'}\goeq \T{0}{\Lcm(b, b')}$.
    \end{itemize}
    The first relation follows from Example \ref{ExampleMinimumElement}. We prove the second one.
    If $b$ or $b'$ is $0$, $\Lcm(b,b')$ is also $0$, and this is trivial. If $b,b'\neq 0$, since $\T{0}{b}\times \T{0}{b'}$ is $\Gcd(b,b')$-copies of $\T{0}{\Lcm(b,b')}$, we have $\T{0}{b}+ \T{0}{b'}\goeq \T{0}{\Lcm(b,b')}$.
\end{proof}

Before stating our main theorem, we clarify the usage of the term \dq{ideals} of a poset.
\begin{definition}[Ideal]\label{DefinitionIdeal}
    For a poset $P$ with finite supremums, an \emph{ideal} of $P$ is a downward closed subset $I \subset P$ that is closed under finite supremums.
\end{definition}
Notice that an ideal is non-empty since it contains the supremum of the empty set, i.e., the minimum element.
\begin{notation}
    The poset of all ideals of a poset $P$ with the inclusion relation is denoted by $\Ideal (P)$.
\end{notation}

\begin{theorem}[Main Theorem]\label{TheoremMainTheorem}
    There is a natural bijective correspondence between 
    \begin{itemize}
        \item Quotient toposes of $\DD$ and
        \item Ideals of $\rNN$.
    \end{itemize}
    Furthermore, it defines the poset isomorphism
    \[
    \PQ_{\DD}\cong \Q_{\DD} \cong \Ideal (\rNN)
    \]
\end{theorem}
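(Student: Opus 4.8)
The plan is to run the order-theoretic machinery of Section~\ref{sectionGenerativeOrder} and then read off the answer. By Proposition~\ref{PropositionConnectedIsomorphic} the poset $\Gc_{\DD}$ is isomorphic to $\NN=\cN\times\cpN$, hence small; so Proposition~\ref{PropositionChainOfEmbeddings} together with the Fundamental Lemma (Theorem~\ref{TheoremFundLemma}) gives order-embeddings
\[
\Q_{\DD}\hookrightarrow\PQ_{\DD}\cong\G_{\DD}\hookrightarrow\Dw{\Gc_{\DD}}\cong\Dw{\NN},
\]
and, unwinding Lemma~\ref{LemmaConnectedDense}, the composite sends a quotient (or prequotient) $Q$ to its set of connected members, i.e.\ to $\{(a,b)\in\NN\mid\T{a}{b}\in Q\}$. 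Everything now reduces to describing the image of this composite and recognizing it as $\Ideal(\rNN)$.

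First I would replace $\Gc_{\DD}\cong\NN$ by the subposet $P$ of connected objects of \emph{finite} height, so that $P\cong\rNN$. The key input is Lemma~\ref{lemmaAinftySup}: $\T{\infty}{b}=\sup_{a<\infty}\T{a}{b}$ is a \emph{small} supremum in $\G_{\DD}$, so $P$ is still dense in $\G_{\DD}$ — in the density argument of Lemma~\ref{LemmaConnectedDense} the only connected objects outside $P$ are the $\T{\infty}{b}$, and these are handled by this supremum together with Proposition~\ref{PropositionFullCoprod}. This refines the chain to $\Q_{\DD}\hookrightarrow\PQ_{\DD}\cong\G_{\DD}\hookrightarrow\Dw{\rNN}$, the last map being $Q\mapsto\{(a,b)\in\rNN\mid\T{a}{b}\in Q\}$.

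Next, the image of $\G_{\DD}$ in $\Dw{\rNN}$ lies inside $\Ideal(\rNN)$ (Definition~\ref{DefinitionIdeal}): a prequotient is closed under binary coproducts, and by Lemma~\ref{LemmaSuppreserving} (with Proposition~\ref{PropositionOrderPreserivng}) the join in $\rNN$ of $(a,b)$ and $(a',b')$, namely $(\max(a,a'),\Lcm(b,b'))$, is exactly $\T{a}{b}+\T{a'}{b'}$ computed in $\G_{\DD}$; so the set of connected members of a prequotient is a downward closed, finite-join-closed subset of $\rNN$, i.e.\ an ideal. The converse — that every ideal of $\rNN$ arises — is the substantial step, and here the two explicit constructions enter. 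I would first prove a short structural lemma: every ideal $J$ of $\rNN=\N\times\cpN$ has one of the three shapes $\rNN$; or $\{0,\dots,a_0\}\times\cpN$ with $a_0\in\N$; or $A\times I$ with $A$ a down-set of $\N$ and $I$ a proper ideal of $\cpN$ — because closure under finite joins forbids a nonzero ``period-$0$'' entry lying below a strictly higher entry, and, on the range of heights carrying nonzero period, the $\Lcm$-relation forces the period-ideal to be constant. Each shape is then matched to a constructed quotient whose connected members one computes directly: $\rNN\leftrightarrow\DD$; $\{0,\dots,a_0\}\times\cpN\leftrightarrow$ the eventual-bijection quotient for $a_0$ (its connected members are precisely the systems of height $\le a_0$, by Proposition~\ref{PropositionEventualBijIntermsOfHeight}); and $A\times I\leftrightarrow$ the prime quotient of Proposition~\ref{PropositionConstructionPrimeQuotients} for the $\alpha\colon\cP\to\cN$ recording $\sup A$ at $\infty$ and $\sup\{v_p(b):b\in I\}$ at each prime $p$ (a connected $\T{a'}{b'}$ lies in it iff $b'\ne 0$, $v_p(b')\le\alpha(p)$, and $a'\le\alpha(\infty)$). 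Thus for every ideal $J$ of $\rNN$ there is a quotient whose finite-height connected members are exactly $J$.

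Assembling: the image of $\Q_{\DD}$ in $\Dw{\rNN}$ contains $\Ideal(\rNN)$ by the previous paragraph and is contained in it by the one before, so equals it; since $\PQ_{\DD}\cong\G_{\DD}$ is sandwiched between, its image is also $\Ideal(\rNN)$, which simultaneously yields $\Q_{\DD}=\PQ_{\DD}$ (every prequotient of $\DD$ is a quotient, Corollary~\ref{CorollaryDDprequotientIsQuotient}) and the order isomorphism $\PQ_{\DD}\cong\Q_{\DD}\cong\Ideal(\rNN)$; naturality is the observation that the bijection is the canonical ``connected members of finite height'' assignment, monotone in both directions. The one genuinely non-routine point is the surjectivity step of the third paragraph: classifying the ideals of $\N\times\cpN$ by the three shapes and recognizing each among the two explicit construction families, with the $\cN$-versus-$\N$ bookkeeping (the heights equal to $\infty$) handled correctly; the rest is assembly on top of Section~\ref{sectionGenerativeOrder} and Proposition~\ref{PropositionConnectedIsomorphic}.
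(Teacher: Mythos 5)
Your proposal is correct and runs along essentially the same lines as the paper's own proof: set up the embedding chain of Proposition~\ref{PropositionChainOfEmbeddings} via Proposition~\ref{PropositionConnectedIsomorphic}, cut down to the finite-height connected objects using Lemma~\ref{lemmaAinftySup}, show the image lies in ideals via Lemma~\ref{LemmaSuppreserving}, and prove surjectivity by matching the shape of each ideal of $\rNN$ to one of the two construction propositions. The paper organizes the surjectivity step by a two-way case split on whether $(0,0)\in I$ rather than your three-shape enumeration, and reaches $\Ideal(\rNN)$ by first passing through $\Ideal(\NN)$ and arguing injectivity of the restriction rather than by proving directly that the finite-height connected objects are dense, but these are cosmetic reorganizations of the same argument.
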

\begin{proof}
    Take an arbitrary prequotient $Q$ of $\DD$. Then by Proposition \ref{PropositionChainOfEmbeddings}, we obtain the associated downward closed set $J\subset \NN \cong \Gc_{\DD}$. By Lemma \ref{LemmaSuppreserving}, $J$ defines an ideal of $\Gc_{\DD}$.
    Then, $I=J\cap (\rNN)\subset \rNN$ is an ideal of $\rNN$. 
    By Lemma \ref{lemmaAinftySup}, $J$ contains $(\infty, b) \in \NN$ if and only if $I$ contains $(a,b) \in \rNN$ for all $a \in \N$. This proves that the composite correspondence
    \[
    \begin{tikzcd}
        \PQ_{\E} \ar[r]& \Ideal (\NN) \ar[r
        ]&\Ideal (\rNN)
    \end{tikzcd}
    \]
    is injective.

    Conversely, take an arbitrary ideal $I\subset \rNN$. It suffices to prove $I$ is induced by a quotient $Q$. In other words, we construct a quotient $Q$ such that 
    \[
    I = \{(a,b)\in \rNN \mid 
    \T{a}{b} \in Q
    \}
    \]
    
    \begin{description}
        \item[If $(0,0)\in I$] \hspace{1pt}\\
        As $I$ is downward closed, we can take $A\in \cN$ such that
        \[(a,1)\in I \iff 
        a\leq A.\]
        Then, since $I$ is an ideal, we have \[I=\{a\in \N\mid a\leq A\}\times \cpN.\]
        If $A = \infty$, $I$ is induced by the maximum quotient, which is $\DD$ itself. If $A< \infty$, $Q$ is constructed by Proposition \ref{PropositionEventualBijConstruction} and a non-negative integer $A\in \N$.

        \item[If $(0,0)\notin I$] \hspace{1pt}\\
        In this case, for any $(a,b)\in I$, $b$ is not equal to $0$. We define $\alpha \colon \cP \to \cN$ by
        \begin{align*}
            \alpha(\infty) &= \sup\{n\in \N \mid (n,1)\in I\}\\
            \alpha(p) &= \sup\{n\in \N\mid (0,p^n)\in I\}.
        \end{align*}
        By Proposition \ref{PropositionConstructionPrimeQuotients}, there is the associated hyperconnected quotient $Q$ and the associated ideal of $Q$ coincides with $I$.
        \invmemo{We can write a bit more}
    \end{description}
\end{proof}

We finish this section with several immediate corollaries. As promised, by the proof of our main theorem, we also obtained Corollary \ref{CorollaryElementaryInAdvance}.

\begin{corollary}[Prequotient $=$ Quotient for $\DD$]\label{CorollaryDDprequotientIsQuotient}
    The class of discrete dynamical systems that is closed under finite limits and small colimits is precisely the same as a quotient of $\DD$. In other words, for the topos of discrete dynamical systems $\DD$, the notions of quotients and prequotients coincide. 
\end{corollary}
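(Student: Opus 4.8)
The plan is to read this off from the main theorem (Theorem \ref{TheoremMainTheorem}) together with Lemma \ref{LemmaQuotientIsPrequotient}. By Definition \ref{DefinitionPrequotients}, a class of discrete dynamical systems closed under finite limits and small colimits is exactly a prequotient of $\DD$ (closure under isomorphic objects being automatic once we treat such classes up to isomorphism, as in Remark \ref{RemarkQuotientToposesAsClassesOfObjects}). Lemma \ref{LemmaQuotientIsPrequotient} already gives the inclusion $\Q_{\DD} \subseteq \PQ_{\DD}$, so everything reduces to showing that every prequotient of $\DD$ is in fact a quotient.

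First I would isolate the two facts established inside the proof of Theorem \ref{TheoremMainTheorem}. (i) The assignment sending a prequotient $Q$ to the subset $I_Q = \{(a,b)\in\rNN \mid \T{a}{b}\in Q\}$ is a well-defined injection $\PQ_{\DD}\hookrightarrow\Ideal(\rNN)$; this rests on Proposition \ref{PropositionChainOfEmbeddings}, on the isomorphism $\NN\cong\Gc_{\DD}$ (Proposition \ref{PropositionConnectedIsomorphic}), on Lemma \ref{LemmaSuppreserving}, and on Lemma \ref{lemmaAinftySup}, the last of which lets the behaviour at $a=\infty$ be recovered from the finite-$a$ data, so that the downward-closed subset of $\NN$ associated to $Q$ is determined by its trace on $\rNN$. (ii) Conversely, every ideal $I\subseteq\rNN$ is of the form $I_{Q_I}$ for an honest quotient $Q_I$ --- namely $\DD$ itself, a quotient via prime numbers (Proposition \ref{PropositionConstructionPrimeQuotients}), or a quotient of eventual bijections (Proposition \ref{PropositionEventualBijConstruction}) --- according to the case analysis in that proof.

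Granting these, the argument is immediate: let $Q$ be an arbitrary prequotient, put $I=I_Q$, and take the quotient $Q_I$ from (ii). Then $Q$ and $Q_I$ are both prequotients (the latter by Lemma \ref{LemmaQuotientIsPrequotient}) with $I_Q = I = I_{Q_I}$, so the injectivity in (i) forces $Q = Q_I$. Hence $Q$ is a quotient, which gives $\PQ_{\DD} = \Q_{\DD}$ and the claimed coincidence of the two notions for $\DD$.

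I do not expect a genuine obstacle here: the only substantive input --- the surjectivity of $\PQ_{\DD}\to\Ideal(\rNN)$ onto actual quotients --- has already been carried out in Theorem \ref{TheoremMainTheorem}, so the present statement is essentially a repackaging of that content. The only point requiring a little care is the bookkeeping identifying a ``class closed under finite limits and small colimits'' with a prequotient (isomorphism-closure, and the passage between a full subcategory and its class of objects).
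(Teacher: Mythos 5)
Your proposal is correct and takes essentially the same route as the paper: the paper presents this corollary as immediate from Theorem \ref{TheoremMainTheorem}, and your unpacking --- combine the inclusion $\Q_{\DD}\subseteq\PQ_{\DD}$ from Lemma \ref{LemmaQuotientIsPrequotient} with the injectivity of $\PQ_{\DD}\hookrightarrow\Ideal(\rNN)$ and the fact that every ideal is realized by an actual quotient, both established in the proof of the main theorem --- is precisely the intended reading.
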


\begin{corollary*}[\ref{CorollaryElementaryInAdvance}]
    Every class of discrete dynamical systems that is closed under taking finite limits and small colimits defines a quotient of $\DD$ and obtained by one of the following constructions:
    \begin{itemize}
        \item $\DD$ itself
        \item Quotient via Prime numbers (Proposition \ref{PropositionConstructionPrimeQuotients})
        \item Quotient of Eventual bijections (Proposition \ref{PropositionEventualBijConstruction}).
    \end{itemize}
\end{corollary*}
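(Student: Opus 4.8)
The plan is to read this off directly from Theorem \ref{TheoremMainTheorem} together with the case analysis already performed in its proof. First I would observe that, by Definition \ref{DefinitionPrequotients}, a class of discrete dynamical systems closed under isomorphic objects, finite limits, and small colimits is exactly a prequotient of $\DD$; and the hypothesis \dq{closed under finite limits and small colimits} entails closure under isomorphic objects (isomorphic objects are retracts of one another, or simply: being a limit of the one-object diagram). Theorem \ref{TheoremMainTheorem} gives the chain of isomorphisms $\PQ_{\DD}\cong\Q_{\DD}\cong\Ideal(\rNN)$, so such a class $Q$ is automatically a quotient, and it is determined by the ideal
\[
I=\{(a,b)\in\rNN\mid \T{a}{b}\in Q\}\subset\rNN.
\]
It remains only to recognize the concrete construction producing $Q$ from $I$, which is precisely what the two branches of the proof of Theorem \ref{TheoremMainTheorem} supply.

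I would then split into the same two cases. \textbf{If $(0,0)\notin I$:} every $(a,b)\in I$ has $b\neq 0$, and the function $\alpha\colon\cP\to\cN$ with $\alpha(\infty)=\sup\{n\in\N\mid(n,1)\in I\}$ and $\alpha(p)=\sup\{n\in\N\mid(0,p^n)\in I\}$ recovers $I$, so by Proposition \ref{PropositionConstructionPrimeQuotients} the quotient $Q$ is a Quotient via Prime numbers. \textbf{If $(0,0)\in I$:} since $I$ is downward closed and closed under finite suprema, it must be of the form $I=\{a\in\N\mid a\leq A\}\times\cpN$ for the unique $A=\sup\{a\in\N\mid(a,1)\in I\}\in\cN$. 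When $A=\infty$ this is all of $\rNN$, whose corresponding quotient is $\DD$ itself; when $A<\infty$, Proposition \ref{PropositionEventualBijIntermsOfHeight} identifies the quotient of Proposition \ref{PropositionEventualBijConstruction} with parameter $A$ as exactly the class of $\X$ whose connected components have height at most $A$, i.e.\ the quotient with ideal $I$, so $Q$ is a Quotient of Eventual bijections.

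The bulk of the work is done by Theorem \ref{TheoremMainTheorem}, so there is no serious obstacle here; the only genuine bookkeeping is (i) verifying that in the case $(0,0)\notin I$ the ideal really is determined by the single height bound together with the prime-power period bounds — this is immediate once one notes that $\rNN=\cN\times\cpN$ and that $\cpN$ under divisibility is the product over primes $p$ of the chains $\{1,p,p^2,\dots\}$, so that an ideal of $\rNN$ not containing $(0,0)$ is exactly cut out by one threshold in each factor — and (ii) matching the subcase $(0,0)\in I$, $A<\infty$ with Proposition \ref{PropositionEventualBijConstruction} through the reformulation in Proposition \ref{PropositionEventualBijIntermsOfHeight}. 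Both are routine, so the corollary follows.
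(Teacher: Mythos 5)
Correct, and the approach is the same as the paper's: the paper simply notes that Corollary \ref{CorollaryElementaryInAdvance} falls out of the case analysis in the proof of Theorem \ref{TheoremMainTheorem} ($(0,0)\in I$ with $A=\infty$ gives $\DD$; $(0,0)\in I$ with $A<\infty$ gives Proposition \ref{PropositionEventualBijConstruction}; $(0,0)\notin I$ gives Proposition \ref{PropositionConstructionPrimeQuotients}). You spell this out, including the small preliminary point that closure under finite limits already forces closure under isomorphism so the class is a prequotient in the sense of Definition \ref{DefinitionPrequotients}, and the bookkeeping that an ideal of $\rNN$ is cut out by a height threshold together with per-prime period thresholds; both checks are in order.
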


\begin{corollary}\label{CorollaryNumberofQuotients}
    The number of quotients of $\DD$ is the continuum cardinality.
\end{corollary}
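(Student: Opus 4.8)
The plan is to read the count straight off Theorem~\ref{TheoremMainTheorem}, which gives a bijection between quotient toposes of $\DD$ and ideals of $\rNN$. For the upper bound, observe that $\rNN = \N \times \cpN$ is a countable set; hence it has exactly $2^{\aleph_0}$ subsets and, a fortiori, at most $2^{\aleph_0}$ ideals. Thus $\DD$ has at most continuum-many quotients.

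For the lower bound I would exhibit $2^{\aleph_0}$ pairwise distinct ideals of $\rNN$. Writing $P$ for the (countably infinite) set of prime numbers, for each $S \subseteq P$ set
\[
I_S = \{(0,b) \in \rNN \mid b \text{ is a product of finitely many primes, all lying in } S\}.
\]
First I would check that $I_S$ is an ideal in the sense of Definition~\ref{DefinitionIdeal}: it is downward closed because $\{0\}$ is a down-set of $\N$ and every divisor of such a $b$ again has all its prime factors in $S$; and it is closed under finite suprema because $\Lcm(b,b')$ has the same set of prime divisors as $b$ and $b'$ together, while the empty supremum $(0,1)$ lies in $I_S$ vacuously. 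Distinctness is immediate: for a prime $p$ one has $(0,p)\in I_S$ if and only if $p\in S$, so $S\mapsto I_S$ is injective. Combined with the upper bound and Theorem~\ref{TheoremMainTheorem}, this shows the number of quotients is exactly $2^{\aleph_0}$.

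There is essentially no obstacle here; the only point requiring (routine) care is the verification that each $I_S$ is genuinely an ideal, which is the short check just sketched. Alternatively, the lower bound can be obtained with no new computation by invoking Proposition~\ref{PropositionConstructionPrimeQuotients}: the functions $\alpha_S\colon\cP\to\cN$ sending each $p\in S$ to $\infty$ and every other element of $\cP$ to $0$ already produce $2^{\aleph_0}$ quotients, pairwise distinguished by which loop lengths $(\Z/b\Z,+1)$ they contain.
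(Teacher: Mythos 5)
Your proof is correct and is essentially the intended one: the paper leaves the corollary unproved as an immediate consequence of Theorem~\ref{TheoremMainTheorem}, and indeed remarks in subsection~\ref{subsectionEventualBijection} that Proposition~\ref{PropositionConstructionPrimeQuotients} already provides $2^{\aleph_0}$ quotients, which is precisely your alternative lower-bound argument. Both the upper bound (countably many elements of $\rNN$, hence at most continuum-many ideals) and the explicit family $I_S$ check out, including the small points about $(0,1)$ being the empty supremum and $(0,0)$ never arising since a finite product of primes is nonzero.
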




\section{Where do they come from?}\label{SectionWhere}
So far, we have completed the classification of all quotients of the topos $\DD$.
In this section, we will discuss where those quotients come from.

We use some notions related to geometric morphisms, including connected, hyperconnected, and essential geometric morphisms. For the definitions, see \ref{AppendixGeometricMorphisms}, and for details see \cite{johnstone2002sketchesv1, johnstone2002sketchesv2}.
\invmemo{Write a summary}

\subsection{Hyperconnected quotients and LSC}\label{SubsectionHyperconnectedQuotients}
\invmemo{hyperconnected cite}
In this subsection, we explain how we can deduce Proposition \ref{PropositionConstructionPrimeQuotients} from the first author's previous paper \cite{hora2023internal} on \emph{hyperconnected quotients}.
Here, a hyperconnected quotient of a topos $\E$ means (the equivalence class of) a hyperconnected geometric morphism from $\E$. In other words, it is a quotient that is closed under taking subobjects.

In the first author's previous paper \cite{hora2023internal}, the classification of hyperconnected quotients is given in terms of internal semilattices. As a simple example of its main theorem, we have the following proposition:
\begin{proposition}[\cite{hora2023internal} for a presheaf topos]\label{PropositionLSC}
For a small category $\C$, the presheaf of co-subobjects of representable presheaves
\[\Xi \colon \C \to \Set \colon c \mapsto \mathrm{coSub}(y(c))=\{y(c)\twoheadrightarrow q \}\]
admits an internal semilattice structure, given by the usual poset structure of $\mathrm{coSub}(y(c))$.
There is a natural bijection between 
\begin{itemize}
    \item the internal filters of $\Xi$ and
    \item hyperconnected quotients of the presheaf topos $\ps{\C}$.
\end{itemize}
\end{proposition}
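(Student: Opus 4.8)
The plan is to deduce the proposition from the main theorem of \cite{hora2023internal}, which classifies the hyperconnected quotients of a Grothendieck topos by the internal filters of a canonically attached internal meet-semilattice, together with the observation that for $\ps{\C}$ this abstract semilattice is nothing but $\Xi$ (because the representables $y(c)$ form a generating family). Before invoking that, one verifies directly that $\Xi$ is a presheaf carrying the claimed structure. Since quotient objects of $y(c)$ correspond to internal congruences, i.e.\ suitable subobjects of $y(c)\times y(c)$, the class $\mathrm{coSub}(y(c))$ is a small set and a poset; it has top the terminal quotient $y(c)\twoheadrightarrow \1$ (note $y(c)\neq \0$) and binary meet $q_1\wedge q_2=\mathrm{im}(y(c)\to q_1\times q_2)$, corresponding to intersection of congruences. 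For $f\colon c\to d$ the presheaf action $\Xi(d)\to\Xi(c)$ sends $(y(d)\twoheadrightarrow q)$ to the epimorphic part of the image factorization of $y(c)\xrightarrow{y(f)} y(d)\twoheadrightarrow q$; on congruences this is pullback along $y(f)\times y(f)$, which preserves the full congruence and binary intersections. As an internal algebraic structure on a presheaf is precisely a restriction-compatible pointwise one, $\Xi$ is an internal meet-semilattice.

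Next I would spell out the bijection. To a hyperconnected quotient $Q\subseteq\ps{\C}$ --- a full subcategory closed under isomorphs, finite limits, small colimits and subobjects --- associate the sub-presheaf $F_Q$ with $F_Q(c)=\{[y(c)\twoheadrightarrow q]\mid q\in Q\}$; it contains the top ($\1\in Q$), it is upward closed (a further quotient of $q\in Q$ is the coequalizer of its kernel pair, a diagram in $Q$), and it is closed under binary meets ($q_1\wedge q_2$ is a subobject of $q_1\times q_2$, which lies in $Q$, and $Q$ is subobject-closed), hence is an internal filter. Conversely, to an internal filter $F$ associate $Q_F=\{X\mid \text{for every }c\text{ and every }x\colon y(c)\to X,\ [\mathrm{im}(x)]\in F(c)\}$. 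This is visibly subobject-closed; it is closed under coproducts because representables are connected, and under quotient objects because epimorphisms in $\ps{\C}$ are pointwise surjective (so every $y(c)\to X'$ lifts through $X\twoheadrightarrow X'$, and images only grow along $X\to X'$), hence under all small colimits; it is closed under finite products because $[\mathrm{im}(y(c)\xrightarrow{(x,y)}X\times Y)]=[\mathrm{im}(x)]\wedge[\mathrm{im}(y)]\in F(c)$, hence under finite limits; and $(RZ)(c)=\{z\in Z(c)\mid[\mathrm{im}(z)]\in F(c)\}$ defines a sub-presheaf of $Z$ that provides a right adjoint $R$ to the inclusion $Q_F\hookrightarrow\ps{\C}$. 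One then checks $F_{Q_F}=F$ and $Q_{F_Q}=Q$ using the density theorem, i.e.\ that every $X$ is the union of the images of its representable elements; both assignments are order-preserving for the inclusion orders, so the bijection is in fact an order isomorphism.

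The conceptual crux --- and the only place the meet-semilattice/filter structure is genuinely exploited --- is that the coreflector $R$ is left exact: $R$ preserves monomorphisms tautologically (since $RZ'=Z'\cap RZ$ inside $Z$ for a mono $Z'\rightarrowtail Z$) and sends $\1$ to $\1$, and it preserves binary products precisely because, via $[\mathrm{im}(y(c)\xrightarrow{(x,y)}X\times Y)]=[\mathrm{im}(x)]\wedge[\mathrm{im}(y)]$, the filter axioms give $[\mathrm{im}(x)]\wedge[\mathrm{im}(y)]\in F(c)\iff [\mathrm{im}(x)]\in F(c)\text{ and }[\mathrm{im}(y)]\in F(c)$, i.e.\ $R(X\times Y)=RX\times RY$; equalizers then follow as pullbacks of monos. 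I expect no single step to be genuinely hard; the main obstacle is organizational --- setting up cleanly the dictionary between quotients of $y(c)$, congruences on $y(c)$, and images of maps out of $y(c)$ --- after which everything, including the identification with the abstract semilattice of \cite{hora2023internal} (which lets one quote left-exactness of $R$ rather than reprove it), is routine.
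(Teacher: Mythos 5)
The paper offers no in-text proof of Proposition~\ref{PropositionLSC}: it is cited outright as a special case of the main theorem of \cite{hora2023internal}, with the text deferring all definitions and the correspondence itself to that reference. Your proposal, by contrast, reconstructs the whole bijection from scratch in the presheaf case, so the two take genuinely different routes: the paper buys brevity by quoting an external general theorem (about internal semilattices in arbitrary Grothendieck toposes), whereas you buy self-containedness by unwinding the general machinery into explicit manipulations with images of maps out of representables. Your reconstruction is essentially correct and matches the expected specialization: the pointwise meet-semilattice structure on $\Xi$ via $q_1\wedge q_2=\mathrm{im}(y(c)\to q_1\times q_2)$, the assignment $Q\mapsto F_Q$ using closure of $Q$ under subobjects, finite products, and coequalizers of kernel pairs, the inverse $F\mapsto Q_F$ with its pointwise coreflector $R$, and the left exactness of $R$ via the filter axiom $[\mathrm{im}(x)]\wedge[\mathrm{im}(y)]\in F(c)\iff[\mathrm{im}(x)]\in F(c)\land[\mathrm{im}(y)]\in F(c)$. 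Two spots you wave at deserve a sentence each if this were to be written out in full: (i) in checking $Q_{F_Q}=Q$ one needs that $X$ is the image of $\coprod_{(c,x)}\mathrm{im}(x)\to X$ and hence a colimit of objects of $Q$, which uses both density and the subobject-closure of $Q$ again; (ii) in checking $F_{Q_F}=F$ one needs projectivity of representables to see that any $y(d)\to q$ factors through $y(f)$ for some $f\colon d\to c$, so that closure of $F$ under restriction gives the needed memberships. Neither is a gap, just a place to slow down; the argument goes through, and it is indeed the specialization to presheaf toposes of the cited theorem rather than a logically independent alternative.
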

For details of the definition of $\Xi$, internal filters, and the correspondence, see \cite{hora2023internal}. As a corollary, we obtain the following classification of hyperconnected quotients. The classification of hyperconnected quotients of $\DD$ is not new. More general cases are studied in \cite{rogers2023toposes} and \cite{rosenthal1982quotient}.

\begin{corollary}[Classification of Hyperconnected quotients]\label{CorollaryHyperconnectedQuotientsOfDD}
A hyperconnected quotient of the topos $\DD$ is either $\DD$ itself or one constructed by Proposition \ref{PropositionConstructionPrimeQuotients}.
\end{corollary}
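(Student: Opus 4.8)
The plan is to deduce this from the main theorem together with the subobject‑closure properties that have already been recorded for the two construction families, rather than to recompute anything. Recall that a hyperconnected quotient is precisely a quotient closed under subobjects, and that, by Theorem \ref{TheoremMainTheorem} (equivalently Corollary \ref{CorollaryElementaryInAdvance}), every quotient of $\DD$ is one of: $\DD$ itself; a quotient produced by Proposition \ref{PropositionConstructionPrimeQuotients} from a function $\alpha\colon\cP\to\cN$; a quotient produced by Proposition \ref{PropositionEventualBijConstruction} from a finite $a\in\N$. So the three steps are: (1) invoke Corollary \ref{CorollaryElementaryInAdvance} for this trichotomy; (2) observe that $\DD$ and every Proposition \ref{PropositionConstructionPrimeQuotients}-quotient is closed under subobjects --- for $\DD$ this is trivial, and for a Proposition \ref{PropositionConstructionPrimeQuotients}-quotient the proof of that proposition already shows that the defining condition (nonzero period divisible by $p$ at most $\alpha(p)$ times, height at most $\alpha(\infty)$) passes to subobjects; (3) show that no Proposition \ref{PropositionEventualBijConstruction}-quotient with finite $a$ is closed under subobjects.

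For step (3) I would use, for an arbitrary finite $a$, the same witness that settles $a=0$ in the text: $(\Z,+1)=\T{0}{0}$ belongs to such a quotient $Q$ because $(+1)$ restricts to a bijection on $\Image{(+1)^{a}}=\Z$, while its subobject $\Nsucc$ (the nonnegative integers, which form a subset closed under $+1$) does not belong to $Q$, since $\mathrm{succ}$ restricts to $\mathrm{succ}\colon\{k\mid k\ge a\}\to\{k\mid k\ge a\}$, which is not surjective. Hence $Q$ is not subobject-closed, and the three steps together give exactly the stated classification. In this form the argument has no real obstacle: everything substantial is already contained in Theorem \ref{TheoremMainTheorem}.

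For the more conceptual derivation, the one the present subsection is set up for, I would instead apply Proposition \ref{PropositionLSC} to the one-object category $\C=\N$. Then $\Xi$ is the $\N$-set whose underlying set is $\mathrm{coSub}(\Nsucc)$, the quotients of the free system, namely $\Nsucc$ together with the $\rho$-shaped systems $\T{a}{b}$ with $a\in\N$, $b\ge 1$; the generator of $\N$ acts by shortening the tail by one step, and the semilattice operation is given on quotients by their (coarsest) common refinement, so that $\T{a}{b}\wedge\T{a'}{b'}=\T{\max(a,a')}{\Lcm(b,b')}$, the terminal system $\T{0}{1}$ is the unit, and $\Nsucc$ is absorbing. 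The goal is then to classify the internal filters of $\Xi$: the improper filter (the only one containing the absorbing element $\Nsucc$) gives $\DD$, and any other filter is of the shape $\{\T{a}{b}\mid a\le\alpha(\infty)\text{ and }v_{p}(b)\le\alpha(p)\text{ for every prime }p\}$ for a unique $\alpha\colon\cP\to\cN$, which is exactly the filter attached by Proposition \ref{PropositionLSC} to the Proposition \ref{PropositionConstructionPrimeQuotients}-quotient of $\alpha$. The hard part in this route is this classification of filters --- specifically, showing that a filter is determined by its members $\T{a}{1}$ and $\T{0}{p^{n}}$. This rests on $\cpN$ being the product of the prime-power chains $1\mid p\mid p^{2}\mid\cdots$, so that closure under finite meets (least common multiples) rebuilds $\T{0}{b}$, and then $\T{a}{b}=\T{a}{1}\wedge\bigwedge_{p}\T{0}{p^{v_{p}(b)}}$, from prime-power data; the element $\Nsucc$ has to be handled separately, which is precisely why the quotient $\DD$ is listed on its own.
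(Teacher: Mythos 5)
Your proposal contains two arguments; both are correct. The second one, via Proposition \ref{PropositionLSC}, is exactly what the paper does: with $\C=\N$, the presheaf $\Xi$ is the set of co-subobjects of $\Nsucc$, which the paper identifies with the opposite of $(\N\times\cpN_{>})\cup\{\infty\}$, so that internal filters of $\Xi$ become ideals of $(\N\times\cpN_{>})\cup\{\infty\}$, and the ideals not containing $\infty$ are precisely the ``pointwise-sup'' functions $\alpha\colon\cP\to\cN$ of Proposition \ref{PropositionConstructionPrimeQuotients}. Your semilattice data (unit $\T{0}{1}$, absorbing element $\Nsucc$, meet $\T{a}{b}\wedge\T{a'}{b'}=\T{\max(a,a')}{\Lcm(b,b')}$) and your prime-power reconstruction of $\T{0}{b}$ match the paper's computation. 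The one step you leave implicit is the verification that the filter is \emph{internal}, i.e.\ closed under the endofunction $(a,b)\mapsto(\max(a-1,0),b)$, $\infty\mapsto\infty$; the paper remarks this is automatic because that map sends each element to one below it, so any downward-closed set is closed under it, and your proposed filters do satisfy it, but the check should be stated explicitly.

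Your first argument is a genuinely different route. It derives the corollary from the trichotomy of Corollary \ref{CorollaryElementaryInAdvance}, by observing that $\DD$ and the Proposition \ref{PropositionConstructionPrimeQuotients}-quotients are subobject-closed while no Proposition \ref{PropositionEventualBijConstruction}-quotient is. Your witness $(\Z,+1)\supset(\N,\mathrm{succ})$ indeed kills subobject closure uniformly for every finite $a$, and the derivation is not circular: the proof of Theorem \ref{TheoremMainTheorem} uses Propositions \ref{PropositionConstructionPrimeQuotients} and \ref{PropositionEventualBijConstruction} directly, never this corollary. This version is shorter and perfectly valid, but it inverts the intent of section \ref{SectionWhere}, whose aim is to re-derive the two construction families from prior general theory (Proposition \ref{PropositionLSC} for the hyperconnected ones, lax epimorphisms for the essential ones) \emph{independently} of the classification theorem; appealing to Corollary \ref{CorollaryElementaryInAdvance} to prove this statement therefore gives a correct proof but forfeits exactly the conceptual content the subsection is trying to supply.
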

\begin{proof}
In the case where $\C = \N$, $\Xi$ itself is a discrete dynamical system. Its underlying set is the set of all quotient objects of the representable presheaf $\Nsucc$. Therefore, we have 
\[
\Xi = \{\T{a}{b}\mid (a,b) \in \rNN, b>0\} \cup \{\Nsucc\}.
\]
This $\Xi$ is isomorphic to the opposite poset of $(\N \times \cpN _{>}) \cup \{\infty\}$, where $\cpN _{>}$ is the poset of all positive integers with the divisibility relation. The associated endofunction is given by 
\[
(a,b) \mapsto
\begin{cases}
    (a-1,b) & (a>0)\\
    (0,b) & (a=0).
\end{cases}
\]
and $\infty \mapsto \infty$. By Proposition \ref{PropositionLSC}, there is a bijection between hyperconnected quotients of $\DD$ and internal filters of $\Xi$. Using the opposite isomorphism with $(\N \times \cpN _{>}) \cup \{\infty\}$, they correspond to ideals of $(\N \times \cpN _{>} )\cup \{\infty\}$ that are \dq{internal}, i.e., closed under the action of the associated endofunction. 
One can easily observe that this \dq{being internal} condition is trivial in this case.

If an ideal $J$ contains $\infty \in (\N \times \cpN _{>} )\cup \{\infty\}$, $J=(\N \times \cpN _{>} )\cup \{\infty\}$ corresponds to $\DD$ itself. If $\infty \notin J$, $J \subset \N \times \cpN _{>}$ corresponds to the one constructed by Proposition \ref{PropositionConstructionPrimeQuotients} since the poset $\N \times \cpN _{>}$ is isomorphic to the poset of finite-support functions $\cP \to \N$ with the pointwise order, and its ideal corresponds to the \dq{pointwise sup} function $\cP \to \cN$.
\end{proof}



\subsection{Essential quotients, Lax-epi functors, and Monoid epimorphisms}\label{SubsectionEssentialQuotients}
In this subsection, we will give a classification of all \emph{essential quotients}, which includes Proposition \ref{PropositionEventualBijConstruction}. 
Using these arguments, we will answer the questions: \dq{Where did the $\rNN$ in $\Q_{\E}\cong \Ideal (\rNN)$ come from?}, \dq{Where did the generalized notion of height come from?}, and \dq{Where did $\T{a}{0}$ come from?}. The answer in this subsection is: \dq{They come from monoid epimorphisms from $\N$!}. 

Our arguments in this subsection are based on \cite{el2002simultaneously}, which gives a complete description of all essential quotients. The concrete calculation leads us to the study of lax-epi functors and monoid epimorphisms from $\N$.


\subsubsection{Essential quotients of a presheaf topos}
Let us start by clarifying our terminology.
\begin{definition}
    A quotient is called \textit{essential}, if the associated connected geometric morphism is essential, as a geometric morphism.
\end{definition}
See Appendix \ref{AppendixGeometricMorphisms} for related definitions.

A typical example of essential geometric morphisms is one induced by a functor. A functor between small categories $F:\C \to \D$ induces the precomposition functor
\[\ps{\C} \leftarrow \ps{\D}.\]
This induced functor admits both left and right adjoints, thus defining an essential geometric morphism. 

Essential quotients of a presheaf topos are completely determined as an immediate corollary of 
\cite{el2002simultaneously}.
\begin{theorem}[\cite{el2002simultaneously}]\label{TheoremEssentialOfPresheaf}
    For a small category $\C$, an arbitrary essential quotient of $\ps{\C}$ is induced by a functor from $\C$. In particular, every essential quotient is a presheaf topos.
\end{theorem}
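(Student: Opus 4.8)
The plan is to follow \cite{el2002simultaneously} and reconstruct the indexing category of the quotient directly. Write $i\colon Q\hookrightarrow\ps{\C}$ for the inclusion of an essential quotient, with Yoneda embedding $y\colon\C\to\ps{\C}$. Being a quotient, $i$ has a right adjoint $R$ (so $Q$ is coreflective); being \emph{essential}, its inverse-image part $i$ has a further left adjoint $L$ (so $Q$ is also reflective). Thus $L\dashv i\dashv R$ with $i$ fully faithful, hence $i$ preserves all small limits and colimits while $L$, being a left adjoint, preserves all small colimits. (Notably, left exactness of $i$ will not be needed; this is why the same argument covers any full subcategory of $\ps{\C}$ that is simultaneously reflective and coreflective.)

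First I would define the candidate $\D$: let it be the full subcategory of $Q$ on the objects $F(c):=L\,y(c)$, $c\in\ob{\C}$, and let $F\colon\C\to\D$ be the functor $c\mapsto L\,y(c)$, acting on morphisms through $L$. Since $\C$ is small, $\D$ is essentially small. I will show that the nerve $N\colon Q\to\ps{\D}$, $q\mapsto Q(-,q)|_{\D}$, is an equivalence, and that $F^{*}\circ N\cong i$; these two facts immediately give the theorem, since they exhibit $Q$ as the essential image of the fully faithful $F^{*}\colon\ps{\D}\to\ps{\C}$ — i.e. as the quotient induced by $F$ — and simultaneously identify $Q\simeq\ps{\D}$ as a presheaf topos.

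The equivalence $N$ rests on two points. \emph{(i) Each $F(c)$ is an atom of $Q$}: by full faithfulness of $i$ and the Yoneda lemma, $Q(L\,y(c),-)\cong\ps{\C}(y(c),i(-))\cong(\mathrm{ev}_{c})\circ i$, and both $i$ (coreflective inclusion) and $\mathrm{ev}_{c}$ (colimits of presheaves are computed pointwise) preserve small colimits. \emph{(ii) $\D$ is dense in $Q$}: the representables are dense in $\ps{\C}$, so each $i(q)$ is the canonical colimit, over its category of elements, of the representables mapping into it; applying the cocontinuous reflector $L$, using $L\,y(c)=F(c)$, and reindexing via the bijections $\ps{\C}(y(c),i(q))\cong Q(F(c),q)$, one obtains $q\cong L\,i(q)$ as the canonical colimit of the objects $F(c)$ over the comma category $F\downarrow q$ — which, up to the routine cofinality comparison with $\D\downarrow q$, is exactly density of $\D$ in $Q$. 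Now $Q$ is cocomplete (a coreflective subcategory of the cocomplete $\ps{\C}$) and has a small dense full subcategory of atoms, so the standard recognition theorem for presheaf categories applies: $N$ is fully faithful by density and cocontinuous by atomicity, its left adjoint is the $\colim$-extension of $\D\hookrightarrow Q$ along Yoneda, the counit is invertible because $N$ is fully faithful, and the unit is invertible because it is so on representables and its source and target are both cocontinuous functors on $\ps{\D}$; hence $N$ is an equivalence.

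It remains to identify $i$ with $F^{*}$: for $q\in Q$ and $c\in\ob{\C}$,
\[
\bigl(F^{*}N(q)\bigr)(c)=N(q)\bigl(F(c)\bigr)=Q\bigl(F(c),q\bigr)\cong (i\,q)(c),
\]
naturally in $c$ and $q$, so $F^{*}\circ N\cong i$; thus the essential image of $F^{*}$ is that of $i$, namely $Q$, and $F^{*}\cong i\circ N^{-1}$ is fully faithful. Hence $Q$ is precisely the quotient of $\ps{\C}$ induced by $F\colon\C\to\D$, and $Q\simeq\ps{\D}$. I expect the crux — and the step demanding the most care — to be (ii): passing from ``$q$ is \emph{some} colimit of the $F(c)$'s'' to genuine density of $\D$ in $Q$, since colimit-generation is strictly weaker than density in general, and it is precisely the fact that $L$ is a \emph{reflector} (rather than an arbitrary cocontinuous functor) that makes the upgrade possible. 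This is also where the argument of \cite{el2002simultaneously} does its real work.
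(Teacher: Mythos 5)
The paper states this result as a citation to \cite{el2002simultaneously} and does not supply its own proof, so there is no internal argument to compare against; you are reconstructing the reference. Your overall strategy is the right one (build $\D$ from $L\circ y$, show the nerve into $\ps{\D}$ is an equivalence, identify $F^{*}\circ N$ with $i$), and your parenthetical remark that left-exactness of $i$ is never used — so the same argument classifies arbitrary simultaneously reflective-and-coreflective full subcategories of $\ps{\C}$ — is exactly right and is indeed the form in which \cite{el2002simultaneously} prove it.

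There is, however, a genuine gap in step (ii), and it sits precisely where you predicted it would. Colimit-generation over $F\downarrow q$ and density of $\D$ in $Q$ are not the same thing, and the cofinality comparison between $F\downarrow q$ and $\D\downarrow q$ is not routine: $F\colon\C\to\D$ is surjective on objects by construction, but it is typically far from full (already for $\N\to\Z$ it misses $-1$), so a $\D$-morphism $g\colon F(c_0)\to F(c)$ need not lie in the image of $F$ and the connectedness of the comma categories $(d,\gamma)\downarrow\Phi$ that finality would require is not evident. Your sentence ``it is precisely the fact that $L$ is a reflector \dots\ that makes the upgrade possible'' is the correct intuition, but as written it is an announcement rather than an argument, and the proof does not close.

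The fix is both easy and strictly simpler than the route you attempted, and it uses only ingredients you already put on the table. You proved $F^{*}\circ N\cong i$. Now combine this with two elementary observations: (a) since every object of $\D$ is of the form $F(c)$, the restriction functor $F^{*}\colon\ps{\D}\to\ps{\C}$ is faithful (a natural transformation between $\D$-presheaves that agrees on every $F(c)$ agrees everywhere); and (b) $i$ is fully faithful because $Q$ is a full subcategory. Given $\theta\colon N(q)\to N(q')$ in $\ps{\D}$, apply $F^{*}$ to obtain a map $i(q)\to i(q')$, which by fullness of $i$ equals $i(h)=F^{*}N(h)$ for a unique $h\colon q\to q'$ in $Q$; faithfulness of $F^{*}$ then forces $\theta=N(h)$. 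The same two facts give faithfulness of $N$. Hence $N$ is fully faithful — i.e.\ $\D$ is dense in $Q$ — with no comma-category bookkeeping at all. From there your cocontinuity argument (unit invertible on representables, both sides cocontinuous) finishes the proof that $N$ is an equivalence exactly as you wrote it.
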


Therefore, for the purpose of this section, it is enough to observe
when a functor $F\colon \N \to \D$ induces an essential and connected geometric morphism, i.e., when the precomposite functor 
\[\ps{\N} \leftarrow \ps{\D}\]
is fully faithful. The answer for general functor $F\colon \C \to \D$ is given in \cite{adamek2001functors}, which focuses on \emph{lax-epi functors}.

\invmemo{Sousa}

\invmemo{Cite J. R. Isbell, Epimorphisms and Dominions}


\invmemo{Write on solid rings}

\begin{definition}[Lax epimorphism]
    A $1$-cell $F:C\to D$ in a $2$-category $\mathcal{A}$ is a \emph{lax epimorphism}, if, for every object $E$, the hom functor
    \[\mathcal{A} (C,E)\leftarrow \mathcal{A} (D,E)\]
    is fully faithful. 
\end{definition}

In particular, a functor between small categories $F:\C \to \D$ is a lax epimorphism in the $2$-category of small categories $\Cat$, if, for any small category $\E$, the induced precomposition functor 
\[
\E^{\C} \leftarrow \E^{\D}
\]
is fully faithful. A lax-epi functor is also called by other names, a \emph{co-fully faithful functor}, an \emph{absolutely dense functor}, and a \emph{connected functor} in \cite{adamek2001functors, el2002simultaneously, nunes2022lax}.
\cite{adamek2001functors} shows that this condition is equivalent to what we need:
\begin{proposition}[\cite{adamek2001functors}]
    For a functor between small categories $F\colon \C \to \D$, the following conditions are equivalent.
    \begin{itemize}
        \item $F$ is a lax epimorphism in the $2$-category of small categories.
        \item The pre-composite functor
        \[
        \ps{\C} \leftarrow \ps{\D}
        \]
        is fully faithful.
    \end{itemize}
\end{proposition}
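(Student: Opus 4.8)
The plan is to reduce the statement to a symmetric form about a single functor, which I then prove by hand. For a functor $H\colon \mathcal{A}\to\mathcal{B}$ between small categories write $\Set^{H}\colon \Set^{\mathcal{B}}\to\Set^{\mathcal{A}}$ for the precomposition functor. It suffices to prove the equivalence
\[
H \text{ is a lax epimorphism in } \Cat \quad\Longleftrightarrow\quad \Set^{H} \text{ is fully faithful} .
\]
Indeed, the functor $\ps{\C}\leftarrow\ps{\D}$ in the proposition is exactly $\Set^{F^{\op}}\colon \Set^{\D^{\op}}\to\Set^{\C^{\op}}$, while \dq{$F$ is a lax epimorphism} is equivalent to \dq{$F^{\op}$ is a lax epimorphism}: for every small $\E$ the functor $\E^{F^{\op}}$ is fully faithful iff $(\E^{\op})^{F}$ is, and $\E$ ranges over all small categories exactly as $\E^{\op}$ does. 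So the proposition is the displayed equivalence applied to $H=F^{\op}$. The left-to-right implication of the displayed equivalence is immediate (it is the instance $\E=\Set$ of the definition), so the content is the converse.

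For the converse I would introduce, for each morphism $f\colon b\to b'$ of $\mathcal{B}$, the \emph{factorization category} $\mathrm{Fact}_{H}(f)$: its objects are triples $(a,\, g\colon b\to Ha,\, h\colon Ha\to b')$ with $h\circ g=f$, and a morphism $(a,g,h)\to(a',g',h')$ is a morphism $k\colon a\to a'$ of $\mathcal{A}$ with $Hk\circ g=g'$ and $h'\circ Hk=h$. The heart of the proof is the intermediate equivalence
\[
\Set^{H} \text{ is fully faithful} \quad\Longleftrightarrow\quad \mathrm{Fact}_{H}(f) \text{ is nonempty and connected for every morphism } f \text{ of } \mathcal{B} .
\]
To prove it I would use the adjunction $\mathrm{Lan}_{H}\dashv\Set^{H}$: a right adjoint is fully faithful iff its counit is invertible, and since $\mathrm{Lan}_{H}\Set^{H}$ is cocontinuous and the corepresentables $\mathcal{B}(b,-)$ form a dense generator of $\Set^{\mathcal{B}}$, it is enough to check the counit $\mathrm{Lan}_{H}\Set^{H}\to\id$ at each $\mathcal{B}(b,-)$. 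By the comma-category formula for $\mathrm{Lan}_{H}$, its $b'$-component at $\mathcal{B}(b,-)$ is the canonical comparison
\[
\colim_{(a,\ h\colon Ha\to b')}\mathcal{B}(b,Ha)\ \longrightarrow\ \mathcal{B}(b,b') ,\qquad (a,h,g)\longmapsto h\circ g ,
\]
and the point is that, computed in $\Set$, its domain is $\coprod_{f\colon b\to b'}\pi_{0}\big(\mathrm{Fact}_{H}(f)\big)$ with the map sending the $f$-summand to $f$; hence bijectivity for all $b,b'$ is exactly the displayed connectedness condition.

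Assuming every $\mathrm{Fact}_{H}(f)$ is nonempty and connected, I would then show directly that $\E^{H}\colon\E^{\mathcal{B}}\to\E^{\mathcal{A}}$ is fully faithful for \emph{every} small $\E$ — crucially, no cocompleteness of $\E$ is needed, which is why the result holds as stated. Given $X,Y\colon\mathcal{B}\to\E$ and a natural $\theta\colon XH\to YH$, assign to each $(a,g,h)\in\mathrm{Fact}_{H}(f)$ the morphism $\Phi(a,g,h):=Yh\circ\theta_{a}\circ Xg\colon Xb\to Yb'$; naturality of $\theta$ makes $\Phi$ constant along every morphism of $\mathrm{Fact}_{H}(f)$, hence along zig-zags, hence on connected components. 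Define $\eta_{b}$ to be the common value of $\Phi$ on $\mathrm{Fact}_{H}(\id_{b})$; then for $f\colon b\to b'$ the identities $Yf\circ\eta_{b}=\Phi(a,g,fh)$ and $\eta_{b'}\circ Xf=\Phi(a',g'f,h')$ — obtained from factorizations of $\id_{b}$ and of $\id_{b'}$ — combined with connectedness of $\mathrm{Fact}_{H}(f)$ give naturality of $\eta$; and $\eta H=\theta$ since $(a,\id_{Ha},\id_{Ha})\in\mathrm{Fact}_{H}(\id_{Ha})$. Uniqueness is forced: any $\eta'$ with $\eta'H=\theta$ satisfies $\eta'_{b}=Yh\circ\eta'_{Ha}\circ Xg=Yh\circ\theta_{a}\circ Xg=\eta_{b}$ for any $(a,g,h)\in\mathrm{Fact}_{H}(\id_{b})$.

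The main obstacle is the intermediate equivalence of the second paragraph: recognising the connected-component set of the comma-category colimit as $\coprod_{f}\pi_{0}(\mathrm{Fact}_{H}(f))$ is the one genuinely computational step, and one must check that the colimit description is compatible with the morphisms of $\mathcal{B}$ in the claimed way. The remaining verifications are formal, though the \dq{$\Phi$ is locally constant} argument requires turning each morphism of $\mathrm{Fact}_{H}(f)$ into a rectangle that collapses under naturality of $\theta$, and the self-duality reduction at the start should be written so the variances are not muddled.
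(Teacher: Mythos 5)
The paper does not prove this proposition; it is cited to Ad\'amek--El Bashir--Sobral--Velebil, so you are reconstructing a known proof rather than competing with one. Your reconstruction follows the standard route from that reference: reduce by self-duality to a covariant functor $H$, characterize full faithfulness of $\Set^{H}$ via the counit of $\mathrm{Lan}_{H}\dashv\Set^{H}$ at representables, identify the relevant colimit in $\Set$ with $\coprod_{f}\pi_{0}(\mathrm{Fact}_{H}(f))$ (this is precisely the \emph{absolutely dense} condition), and then show that nonemptiness-and-connectedness of each $\mathrm{Fact}_{H}(f)$ gives full faithfulness of $\E^{H}$ for every $\E$ by a direct zig-zag argument. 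I checked the diagram chases in the last step (constancy of $\Phi$ along morphisms, naturality of $\eta$, $\eta H=\theta$, uniqueness) and they are correct; the comma-category computation of the counit is also correct. Your emphasis that the final step requires no cocompleteness of $\E$ is exactly the point that makes the argument work for arbitrary test categories.

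There is one genuine, if small, gap. You dismiss the left-to-right direction of your displayed equivalence as ``the instance $\E=\Set$ of the definition.'' But the paper (and the $2$-category $\Cat$ it works in) defines a lax epimorphism by quantifying only over \emph{small} test categories $\E$, and $\Set$ is not small, so this is not literally an instance of the definition. The fix is easy and worth writing: since $\mathcal{B}$ is small and $\Set$ is locally small, any pair of functors $X,Y\colon\mathcal{B}\to\Set$ factors through a small full subcategory $\Set_{\kappa}\subset\Set$ (e.g., sets of cardinality below a sufficiently large $\kappa$), and full faithfulness of $\Set^{H}$ on the hom-set from $X$ to $Y$ is the same as that of $\Set_{\kappa}^{H}$, which follows from the lax-epimorphism hypothesis applied to the small category $\Set_{\kappa}$. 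With that patch, the argument is complete. (Alternatively, one can note that your chain $\Set^{H}$ f.f.\ $\Leftrightarrow$ connectedness $\Rightarrow$ $\E^{H}$ f.f.\ for \emph{all} $\E$ already shows the two conditions are equivalent once the left-to-right direction is supplied, so the size issue really is the only missing link.)
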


\invmemo{citation rules for "Ada+"}

\subsubsection{Essential quotients of a (commutative) monoid action topos}
For the purposes of this paper, we are interested in lax epimorphisms from the monoid $\N$.

\begin{lemma}(\cite{adamek2001functors})\label{LemmaEssentiallySurjUpToRetract}
    A lax-epi functor $F\colon \C \to \D$ is essentially surjective up to retracts, i.e., every object of $\D$ is a retract of $Fc$ for some object $c$ of $\C$.
\end{lemma}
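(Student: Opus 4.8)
The statement to prove is Lemma \ref{LemmaEssentiallySurjUpToRetract}: if $F\colon \C \to \D$ is a lax epimorphism in $\Cat$, then every object $d$ of $\D$ is a retract of $Fc$ for some object $c$ of $\C$. The natural strategy is to feed the lax-epi hypothesis a cleverly chosen target category $\E$ and a cleverly chosen functor on $\D$, so that full faithfulness of $\E^{\C} \leftarrow \E^{\D}$ forces the existence of the desired splitting. The cleanest choice is to take $\E$ to be (a skeleton of) the category of retracts of representables, or more concretely the Karoubi envelope / idempotent completion of $\D$ itself, and exploit that $\D \to \widehat{\D}$ (Yoneda into presheaves, or into the Cauchy completion) is fully faithful and that $\widehat{\D}$ is idempotent-complete.

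First I would fix notation: let $y\colon \D \to \ps{\D}$ be the Yoneda embedding and recall it is fully faithful, so it suffices to realize $y(d)$ as a retract of $y(Fc)$ for some $c$ — equivalently, to produce morphisms $y(d) \to y(Fc) \to y(d)$ composing to the identity, then apply full faithfulness of $y$ to descend the splitting idempotent back to $\D$ (using that an idempotent-split-by-a-retract relation is reflected by a fully faithful functor: if $y(d)$ is a retract of $y(Fc)$, then since $y$ is full and faithful the retraction data comes from $\D$). Then the heart of the matter: since $F$ is a lax epimorphism, the precomposition functor $\ps{\C} \xleftarrow{\;-\circ F^{\op}\;} \ps{\D}$ is fully faithful (this is exactly the Adámek–Rosický–Sousa characterization \cite{adamek2001functors} cited just above, which I am entitled to assume). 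A fully faithful right adjoint means the counit $\varepsilon$ of the adjunction $\mathrm{Lan}_{F^{\op}} \dashv (-\circ F^{\op})$ is a natural isomorphism; applied to the representable $y(d)$ this says $\mathrm{Lan}_{F^{\op}}\bigl(y(d)\circ F^{\op}\bigr) \xrightarrow{\ \sim\ } y(d)$ in $\ps{\D}$.

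Now I would unwind what $\mathrm{Lan}_{F^{\op}}(y(d)\circ F^{\op})$ is: it is a colimit of representables $y(Fc)$ indexed by the comma-type diagram whose objects are pairs $(c, Fc \to d)$. So $y(d)$, being isomorphic to this colimit, is a colimit of a diagram of representables $y(Fc)$, and in particular the identity $\mathrm{id}_{y(d)}$ factors through one of the coprojections $y(Fc) \to y(d)$ by a cocone/colimit argument — here one uses that maps \emph{out of} a representable $y(d)$ into a colimit, namely $\mathrm{id}\colon y(d)\to y(d)$, need not factor through a single stage for a general colimit, so the genuinely careful point is this factorization. The clean way is: $\mathrm{id}_{y(d)} \in \ps{\D}(y(d), y(d)) \cong \mathrm{colim}\,\ps{\D}(y(d), y(Fc))$ only when the colimit is filtered or when $y(d)$ is \emph{finitely presentable} — but $y(d)$ \emph{is} projective/atomic enough: actually the correct observation is that $y(d)$ is a \emph{tiny} (small-projective) object only in special cases, so instead I would localize: the colimit presenting $y(d)$ is a colimit over the category of elements of $y(d)\circ F^{\op}$, which has a canonical element, and tracing the isomorphism $\varepsilon_{y(d)}$ backwards sends $\mathrm{id}_d$ to a specific pair $(c_0, u\colon Fc_0 \to d)$ together with the fact that the induced $y(u)\colon y(Fc_0) \to y(d)$ admits a section — this section is precisely the component of $\varepsilon_{y(d)}^{-1}$ composed with a coprojection. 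Thus $y(d)$ is a retract of $y(Fc_0)$.

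\textbf{The main obstacle.} The subtle step is the last one: extracting an honest retraction $y(d) \triangleleft y(Fc_0)$ from the abstract isomorphism $\mathrm{Lan}_{F^{\op}}(y(d)\circ F^{\op}) \cong y(d)$. One must be careful that "colimit of representables" does not automatically give a splitting of a single coprojection; the correct bookkeeping goes through the \emph{unit} as well, or equivalently through the observation that the identity morphism $d \to d$, viewed as an element of $(y(d)\circ F^{\op})$-weighted data, is \emph{absolutely} dense — this is why lax-epi functors are also called \dq{absolutely dense} \cite{adamek2001functors}: absoluteness of the density presentation means the colimit is preserved by every functor, in particular by $\ps{\D}(y(d), -)$, which \emph{does} then let $\mathrm{id}_{y(d)}$ factor through a stage. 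Once that absoluteness is invoked, the factorization $\mathrm{id}_{y(d)} = \bigl(y(d) \xrightarrow{s} y(Fc_0) \xrightarrow{y(u)} y(d)\bigr)$ is immediate, $y$ fully faithful descends $s$ and $u$ to $\D$, and $d$ is a retract of $Fc_0$. So the plan is: (i) reduce to presheaves via Yoneda; (ii) rephrase lax-epi as "counit of $\mathrm{Lan}_{F^{\op}} \dashv (-\circ F^{\op})$ is iso"; (iii) identify the relevant value of $\mathrm{Lan}_{F^{\op}}$ as an absolute colimit of representables $y(Fc)$; (iv) use absoluteness to factor $\mathrm{id}_{y(d)}$ through one $y(Fc_0)$; (v) descend along fully faithful $y$.
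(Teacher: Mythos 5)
The paper gives no proof of its own for this lemma---it simply cites \cite{adamek2001functors}---so there is no authorial argument to compare against. Your reconstruction is correct in outline, but the hesitation in your ``main obstacle'' paragraph rests on a factual slip, and clearing it up makes the absoluteness detour superfluous. You claim $y(d)$ is tiny (small-projective) in $\ps{\D}$ ``only in special cases''; in fact every representable presheaf is tiny, because by the Yoneda lemma $\ps{\D}(y(d),-)\cong\mathrm{ev}_d$ and evaluation at $d$ preserves all small colimits (colimits in presheaf categories being pointwise). So the naive factorization of $\mathrm{id}_{y(d)}$ through a single stage of the colimit presenting $y(d)$, which you first sketched and then backed away from, already works as stated, and the appeal to absolute density, while not wrong, is doing no work.

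In fact the whole argument tightens if you drop the $\ps{\D}$-factorization viewpoint and simply evaluate the counit isomorphism $\mathrm{Lan}_{F^{\op}}\bigl(y(d)\circ F^{\op}\bigr)\cong y(d)$ at the object $d$. The left-hand side there is the coend $\int^{c\in\C}\D(Fc,d)\times\D(d,Fc)$, and the component of the counit is the map sending $[(g,h)]$ to $g\circ h\in\D(d,d)$, well-defined on the coend because the generating identifications $(g\circ Fm,\,h)\sim(g,\,Fm\circ h)$ have equal composites. Being a bijection, it is in particular surjective, so $\mathrm{id}_d$ has a preimage, and any representative $\bigl(g\colon Fc_0\to d,\ h\colon d\to Fc_0\bigr)$ of that preimage satisfies $g\circ h=\mathrm{id}_d$, exhibiting $d$ as a retract of $Fc_0$. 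This produces the retraction directly in $\D$ and avoids both the Yoneda round-trip and the step of descending the splitting along $y$.
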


\begin{proposition}
    For a monoid $N$, every essential quotient of $\ps{N}$ is induced by a monoid homomorphism $\ph\colon N \to M$.
\end{proposition}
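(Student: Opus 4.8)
The plan is to reduce the base category $\D$ of the essential quotient to a one-object category, i.e.\ a monoid. By Theorem~\ref{TheoremEssentialOfPresheaf}, an arbitrary essential quotient of $\ps{N}$ is induced by a functor $F\colon N \to \D$ for some small category $\D$; and since a quotient corresponds to a connected geometric morphism, the precomposition functor $F^{*}\colon \ps{\D}\to\ps{N}$, $G\mapsto G\circ F$, is fully faithful, i.e.\ $F$ is a lax epimorphism by~\cite{adamek2001functors}. Write $\ast$ for the unique object of $N$ and $d_{0} := F(\ast)$. Since the source of $F$ has the single object $\ast$, Lemma~\ref{LemmaEssentiallySurjUpToRetract} says that \emph{every} object of $\D$ is a retract of $d_{0}$.

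Next I would pass to the monoid $M := \D(d_{0}, d_{0})$ of endomorphisms of $d_{0}$, regarded as a one-object category, together with the canonical fully faithful functor $j\colon M \hookrightarrow \D$ landing on $d_{0}$. The key point is that restriction $j^{*}\colon \ps{\D}\to\ps{M}$, $G\mapsto G\circ j$, is an equivalence of categories. This is the familiar fact that a presheaf topos sees only the Cauchy (idempotent) completion of its indexing category, and the hypothesis \dq{every object of $\D$ is a retract of $d_{0} = j(\ast)$} says precisely that $j$ induces an equivalence on Cauchy completions. In more detail: the left Kan extension $j_{!}\colon\ps{M}\to\ps{\D}$ is fully faithful because $j$ is; its essential image is closed under retracts because $\ps{M}$ is idempotent complete; it contains the representable $\D(-,d_{0})$ and hence every representable $\D(-,d)$, since each such $d$ is a retract of $d_{0}$; and being a left adjoint it preserves colimits, so its essential image contains all presheaves on $\D$. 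Thus $j_{!}$, and therefore $j^{*}$, is an equivalence.

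Finally, since $F(\ast) = d_{0}$, the action of $F$ on hom-sets is a monoid homomorphism $\ph\colon N = \mathrm{End}_{N}(\ast)\to\mathrm{End}_{\D}(d_{0}) = M$, and $j\circ\ph = F$ by construction, so $F^{*} = \ph^{*}\circ j^{*}$ as functors $\ps{\D}\to\ps{M}\to\ps{N}$. As $j^{*}$ is an equivalence, $\ph^{*} = -\circ\ph$ is fully faithful, so $\ps{N}\hookleftarrow\ps{M}$ is a genuine quotient (essential, since $\ph^{*}$ has the left adjoint $\ph_{!}$), and the essential image of $\ph^{*}$ in $\ps{N}$ equals that of $F^{*}$. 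By the convention that quotients sharing an essential image are identified (Remarks~\ref{RemarkIdentificationOfQuotiens} and~\ref{RemarkQuotientToposesAsClassesOfObjects}), the given essential quotient is the one induced by the monoid homomorphism $\ph$.

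The step I expect to be the main obstacle is the equivalence $j^{*}\colon\ps{\D}\to\ps{M}$ — the Morita/Cauchy-completeness input — together with the bookkeeping needed to make sure all the adjoint directions and the phrase \dq{induced by a monoid homomorphism} match the conventions already fixed; everything else sits directly on top of Theorem~\ref{TheoremEssentialOfPresheaf}, Lemma~\ref{LemmaEssentiallySurjUpToRetract}, and~\cite{adamek2001functors}.
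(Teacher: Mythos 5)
Your proof follows the paper's own argument step for step: invoke Theorem~\ref{TheoremEssentialOfPresheaf} and \cite{adamek2001functors} to get a lax-epi $F\colon N\to\D$, use Lemma~\ref{LemmaEssentiallySurjUpToRetract} to see every object of $\D$ is a retract of $d_0 = F(\ast)$, and conclude the inclusion $j\colon M=\D(d_0,d_0)\hookrightarrow\D$ is a Cauchy equivalence, so $F$ factors through the monoid homomorphism $\ph=F\colon N\to M$. The only difference is that you unpack why the Cauchy equivalence holds, which the paper states without proof; the approach is otherwise identical.
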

\begin{proof}
    Due to Theorem \ref{TheoremEssentialOfPresheaf}, every essential quotient of $\ps{M}$ is induced by a lax-epi functor $F\colon N \to \D$. Let $\bullet$ denote the unique object of $N$. By Lemma \ref{LemmaEssentiallySurjUpToRetract}, the embedding of the endomorphism monoid $M\coloneqq \D(F\bullet, F\bullet)$ to $\D$ is a Cauchy equivalence (i.e., it induces the equivalence between the presheaf toposes). This proves that the monoid homomorphism $\ph\coloneqq F\colon N \to M (\subset \D)$ induces the given essential quotient.
\end{proof}

Thus, our goal is to determine monoid homomorphisms $\ph\colon \N \to M$ that are lax epimorphisms.
In \cite{adamek2001functors}, an elementary condition for a functor to be a lax epimorphism is given. As a special case of the result, we obtain an elementary characterization of a lax epimorphism between monoids, using the next \dq{tensor} notation.
\begin{definition}\label{DefinitionTensor}
    For a monoid homomorphism $\ph \colon N \to M$, let $M\otimes_{N}M$ denote the quotient set of $M\times M$ by the equivalent relation $\sim$ generated by
    \[(m_0 , \ph(n)\ml  m_1)\sim (m_0\ml \ph(n), m_1)\]
    for each $n\in N$ and $m_0,m_1\in M$. The equivalence class of $(m_0, m_1)$ is denoted $m_0 \otimes_N m_1$.
\end{definition}
\invmemo{Difference from the coproduct}

Although the assumption of commutativity is not necessary for some implications, we assume it in the next proposition just for simplicity.
\begin{proposition}[Specialized case of \cite{adamek2001functors, isbell1966epimorphisms}]
\label{PropositionEquivalenceOfEpiLaxEpi}
For a commutative monoid $N$ and a monoid homomorphism $\ph\colon N\to M$, the following conditions are equivalent:
\begin{enumerate}
    \item The induced functor $\ps{N}\leftarrow \ps{M}$ is fully faithful, i.e., $\ph$ induces an essential quotient of $\ps{N}$ \label{ConditionQuotientTopos}
    \item $\ph$ is a lax epimorphism in the $2$-category of small categories $\Cat$. \label{ConditionLaxEpi}
    \item the canonical function
    \[M\otimes_{N}M \to M\colon m_0 \otimes_N m_1 \mapsto m_0\ml m_1 \]
    is bijective.\label{ConditionCoend}
    \item For any $m \in M$, $m \otimes_N 1_M = 1_M \otimes m$ in $M\otimes_{N}M$.\label{ConditionTensor}
    \item $\ph$ is epic in the category of monoids. \label{ConditionMonoidsEpi}
    \item $M$is also commutative, and $\ph$ is epic in the category of commutative monoids.\label{ConditionCommutativeMonoidEpi}
\end{enumerate}
\end{proposition}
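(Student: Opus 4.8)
The plan is to prove Proposition \ref{PropositionEquivalenceOfEpiLaxEpi} as a web of implications organized around condition (\ref{ConditionTensor}) (equivalently (\ref{ConditionCoend})), which is the most concrete, using it as the hub to which every other condition is linked. To begin with, the equivalence (\ref{ConditionQuotientTopos}) $\Leftrightarrow$ (\ref{ConditionLaxEpi}) is nothing but the Proposition of \cite{adamek2001functors} recalled just above, applied to the functor $\ph\colon N\to M$ between one-object categories; so it remains only to connect (\ref{ConditionLaxEpi}), (\ref{ConditionCoend}), (\ref{ConditionTensor}), (\ref{ConditionMonoidsEpi}) and (\ref{ConditionCommutativeMonoidEpi}).

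For (\ref{ConditionLaxEpi}) $\Leftrightarrow$ (\ref{ConditionCoend}) I would work through the essential adjoint triple $\ph_!\dashv\ph^{*}\dashv\ph_{*}$ attached to $\ph$, where $\ph^{*}\colon\ps{M}\to\ps{N}$ is restriction of scalars and $\ph_!(-)=-\otimes_{N}M$. Then $\ph^{*}$ is fully faithful iff the counit $\ph_!\ph^{*}\Rightarrow\id$ is invertible; since $\ph_!\ph^{*}$ preserves colimits and the representable right $M$-act $M$ generates $\ps{M}$ under colimits, it suffices to check invertibility at $M$, and the counit at $M$ is precisely the multiplication map $M\otimes_{N}M\to M$ of condition (\ref{ConditionCoend}). (Alternatively, one may simply unwind the elementary colimit criterion for lax epimorphisms of \cite{adamek2001functors} at the one-object category $N$, which reproduces exactly the coend computing $M\otimes_{N}M$ together with the multiplication map.) For (\ref{ConditionCoend}) $\Leftrightarrow$ (\ref{ConditionTensor}): the multiplication map is always a split surjection with section $m\mapsto m\otimes_{N}1_{M}$, so (\ref{ConditionCoend}) is injectivity; injectivity forces (\ref{ConditionTensor}) since $m\otimes_{N}1_{M}$ and $1_{M}\otimes_{N}m$ have the common image $m$, and conversely (\ref{ConditionTensor}) gives $m_0\otimes_{N}m_1=m_0\cdot(1_{M}\otimes_{N}m_1)=m_0\cdot(m_1\otimes_{N}1_{M})=(m_0\ml m_1)\otimes_{N}1_{M}$ using the left $M$-action on the first tensor factor, whence any two elements of $M\otimes_{N}M$ with equal product are equal.

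The monoid-theoretic part is (\ref{ConditionTensor}) $\Leftrightarrow$ (\ref{ConditionMonoidsEpi}) $\Leftrightarrow$ (\ref{ConditionCommutativeMonoidEpi}). For (\ref{ConditionTensor}) $\Leftrightarrow$ (\ref{ConditionMonoidsEpi}) I would invoke Isbell's theory of dominions \cite{isbell1966epimorphisms}: $\ph$ is epic in monoids iff the dominion of $\ph(N)$ in $M$ equals $M$, and the zigzag lemma identifies that dominion with $\{m\in M\mid m\otimes_{\ph(N)}1_{M}=1_{M}\otimes_{\ph(N)}m\}$; since the $N$-biact structure on $M$ factors through $\ph(N)$, the biact tensor products over $N$ and over $\ph(N)$ agree, so this is exactly (\ref{ConditionTensor}). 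For (\ref{ConditionMonoidsEpi}) $\Leftrightarrow$ (\ref{ConditionCommutativeMonoidEpi}) I would first observe that if $\ph$ is epic in monoids then $M$ is commutative: $M$ is the dominion of its submonoid $\ph(N)$, which is commutative (being a quotient of the commutative $N$), and the dominion of a commutative submonoid is itself commutative, again read off from the zigzag description. Once $M$ and $N$ are commutative, the biact tensor $M\otimes_{N}M$ carries the commutative multiplication $(a\otimes_{N}b)(c\otimes_{N}d)=(a\ml c)\otimes_{N}(b\ml d)$ and is the pushout of $\ph$ along itself in commutative monoids, so $\ph$ is epic there iff its two coprojections $m\mapsto m\otimes_{N}1_{M}$ and $m\mapsto 1_{M}\otimes_{N}m$ coincide, which is (\ref{ConditionTensor}) once more; conversely any epimorphism in monoids between commutative monoids restricts to one in the full subcategory of commutative monoids.

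The genuinely substantial inputs are the two external citations: matching the combinatorial zigzag description of dominions in \cite{isbell1966epimorphisms} with the explicit biact tensor of Definition \ref{DefinitionTensor}, and the elementary characterization of lax epimorphisms in \cite{adamek2001functors}. Within our own argument the delicate point I expect to spend care on is keeping the various incarnations of ``$\otimes_{N}$'' aligned — the biact tensor of Definition \ref{DefinitionTensor}, the colimit computing $\ph_!\ph^{*}M$, and the pushout in (commutative) monoids — and verifying they literally coincide as sets, and, once everything is commutative, as monoids; together with the propagation of commutativity from $N$ to $M$ in (\ref{ConditionMonoidsEpi}) $\Rightarrow$ (\ref{ConditionCommutativeMonoidEpi}), which is the one place where the standing hypothesis that $N$ is commutative is really used.
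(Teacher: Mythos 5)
Your web of implications is correct in outline and covers all six conditions; the pieces (\ref{ConditionQuotientTopos}) $\Leftrightarrow$ (\ref{ConditionLaxEpi}) $\Leftrightarrow$ (\ref{ConditionCoend}) (by citation or by the adjoint-triple/counit argument) and (\ref{ConditionCoend}) $\Leftrightarrow$ (\ref{ConditionTensor}) (split surjection, and the bimodule-action computation $m_0\otimes_N m_1=(m_0\ml m_1)\otimes_N 1_M$) are all sound and closely parallel the paper.

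Where you diverge from the paper is in the monoid-theoretic leg. The paper proves (\ref{ConditionTensor}) $\Rightarrow$ (\ref{ConditionMonoidsEpi}) by a short direct construction: given $f,g\colon M\to M'$ agreeing after $\ph$, the assignment $m_0\otimes_N m_1\mapsto f(m_0)\ml g(m_1)$ is well-defined on $M\otimes_N M$, and evaluating on $m\otimes_N 1_M=1_M\otimes_N m$ gives $f(m)=g(m)$; no dominion or zigzag machinery is needed, and only this one direction is required since the proof runs a cycle. You instead route through Isbell's dominion theory for (\ref{ConditionTensor}) $\Leftrightarrow$ (\ref{ConditionMonoidsEpi}), which is a legitimate but heavier path. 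More significantly, for (\ref{ConditionMonoidsEpi}) $\Rightarrow$ (\ref{ConditionCommutativeMonoidEpi}) you assert that ``the dominion of a commutative submonoid is itself commutative, again read off from the zigzag description.'' This is the genuine content of the step, and it is not a one-line consequence of the zigzag theorem; given $m,m'\in M$ each with a zigzag over the central image $\ph(N)$, commuting $m$ past $m'$ requires a nontrivial induction on zigzag length (and the zigzag theorem for monoids has its own subtleties in the literature). The paper sidesteps this entirely by applying the monoid-ring functor $\Z[-]$ and invoking Silver's theorem that an epimorphism out of a commutative ring lands in a commutative ring, which is a clean and fully referenced route. If you want to keep your dominion-based argument you need to either cite a precise statement of this commutativity-propagation result for monoids/semigroups or actually carry out the zigzag induction; as written this is the one underjustified step. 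Finally, for (\ref{ConditionCommutativeMonoidEpi}) $\Rightarrow$ (\ref{ConditionCoend}) your pushout observation matches the paper's.
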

\begin{proof}
\invmemo{Check here}
    The equivalence between \ref{ConditionQuotientTopos},\ref{ConditionLaxEpi}, and \ref{ConditionCoend} is found in \cite{adamek2001functors}. 
    The implication \ref{ConditionCoend}$\implies$\ref{ConditionTensor} easily follows from $1_M \ast m = m =m \ast 1_M$.

    We prove the implication \ref{ConditionTensor}$\implies$\ref{ConditionMonoidsEpi}. Suppose
    \[
    \begin{tikzcd}
        N\ar[r,"\ph"]&M\ar[r, shift left,"f"]\ar[r,shift right,"g"']&M'
    \end{tikzcd}
    \]
    satisfies $f\circ \ph =g\circ \ph$. Then the function
    \[M\otimes_N M \to M'\colon m_0 \otimes_N m_1 \mapsto f(m_0)\ml g(m_1)\]
    is well-defined. 
    Assuming \ref{ConditionTensor}, 
    $f(m)=g(m)$. 

    We prove \ref{ConditionMonoidsEpi}$\implies$\ref{ConditionCommutativeMonoidEpi}. We can borrow a fact from ring theory:
    if a ring epimorphism $f\colon A \to B$ is epic and $A$ is commutative, then $B$ is also commutative (see \cite{silver1967noncommutative}).
    Suppose $\ph\colon N \to M$ is epic in the category of monoids. Converting $\ph$ to a ring homomorphism, we utilize the monoid ring functor $\Z[-]\colon \mathrm{Monoid}\to \mathrm{Ring}$, which is the left adjoint to the forgetful functor $U$ that forgets addition.
    \[
    \ADJ{\mathrm{Rings}}{U}{\mathrm{Monoids}}{\Z{[-]}}
    \]
    Therefore, the induced ring homomorphism $\Z[\ph] \colon \Z[N] \to \Z[M] $ is epic. Since $\Z[N]$ is commutaive, so are $\Z[M]$ and $M$. It is easy to prove $\ph$ is epic in the category of commutative monoids. 

    At last, we prove \ref{ConditionCommutativeMonoidEpi}$\implies$\ref{ConditionCoend}. This follows from 
    \[
    \begin{tikzcd}
        N\ar[r,"\ph"]\ar[d,"\ph"']&M\ar[d,"\iota_2"']\\
        M \ar[r,"\iota_1"]&M\otimes_N M \arrow[lu, phantom, "\rotatebox{180}{$\lrcorner$}", very near start]
    \end{tikzcd}
    \]
    being a pushout diagram in the category of commutative monoids.
\end{proof}


\begin{remark}[Solid rings]
    Our problem of classifying monoid epimorphisms from $\N$ is analogous to the classification of \emph{solid rings}. A ring $R$ is called \emph{solid}, if the unique ring homomorphism $\Z \to R$ is a ring epimorphism, (which is not necessarily surjective). Some examples include $\Z, \Z/n\Z, \mathbb{Q}, \mathbb{Q} \times \Z/n\Z$. By \cite[Corollary 1.2]{silver1967noncommutative}, such $R$ must be commutative. In \cite{bousfield1972core}, solid rings are introduced and classified.

    Related stories are summarized at \cite{BaezQuestionOnSolidRings} and one will find that the above proposition is quite similar to the case of solid rings.
\end{remark}

\subsubsection{Essential quotients of the topos of discrete dynamical systems}
Summarizing the observations made so far, it is sufficient to determine the monoid epimorphisms from $\N$ to the commutative monoid $M$ in order to determine all essential quotients of $\DD$.

Deferring the detailed calculations and proofs to Appendix C, we will only state the conclusions here. Theorem \ref{TheoremMonoidEpimorphismsFromN} proves that every monoid epimorphism from $\N$ is either the identity morphism $\id_{\N}$ or one of the following two forms. From the viewpoint of the associated discrete dynamical systems (Example \ref{ExampleMonoidDD}), the first case corresponds to $\T{a}{b}$ for $a\in \N$ and $b\neq 0$, and the latter case corresponds to $\T{a}{b}$ for $a\in \N$ and $b= 0$ (Figure \ref{FigureMrab}).

\begin{figure}[ht]
    \begin{shaded}
        \centering
        \begin{tikzpicture} [scale = \scvalue]
        \def \r{2/pi}
        \def \g{-2.3}
        
        \def \a{0}
        \node at (-7, \a -0.5) {$\T{3}{4} \cong (\Mr{3}{4}, - \ast \ph(1)) \colon$};
        \foreach \x in {-3, -2, -1} {
            \draw[black, thick, \arst] (\x, \a) -- (\x + 1, \a);
            \fill[black] (\x,\a) circle (0.06);
        }
        \draw[black, thick, \arst] (0,\a)     arc (90  :  0:\r);
        \draw[black, thick, \arst] (\r,\a-\r) arc (360 :270:\r);
        \draw[black, thick, \arst](0, \a-2*\r)arc (270 :180:\r);
        \draw[black, thick, \arst] (-\r,\a-\r)arc (180 : 90:\r);
        \fill[black] (0,\a) circle (0.06);
        \fill[black] (\r,\a-\r) circle (0.06);
        \fill[black] (0, \a-2*\r) circle (0.06);
        \fill[black] (-\r,\a-\r) circle (0.06);

        \def \c{\g}
        \node at (-7, \c -0.5) {$\T{3}{0} \cong (\Mr{3}{0}, - \ast \ph(1)) \colon$};
        \foreach \x in {-3, -2} {
            \draw[black, thick, \arst] (\x, \c) -- (\x + 1, \c);
        }
        \foreach \x in {-3, -2, -1} {
            \fill[black] (\x,\c) circle (0.06);
        }
        \node[above] at (-2, \c) {$x$};
        \draw[black, thick, \arst] (-1, \c) -- (0, \c -1);
        \foreach \x in {-4, -3, -2, -1, 0} {
            \draw[black, thick, \arst] (\x, \c -1) -- (\x + 1, \c -1);
        }
        \draw[black, thick, \arst] (-4.5, \c -1) -- (-4, \c -1);
        \node at (-5, \c -1) {$\cdots$};
        \draw[black, thick] (1, \c -1) -- (1.5, \c -1);
        \node at (2, \c -1) {$\cdots$};
        \foreach \x in {-4, -3, -2, -1, 0, 1} {
            \fill[black] (\x,\c -1) circle (0.06);
        }
        \node[above] at (-4, \c -1) {$y$};
        
        \end{tikzpicture}
        
        \caption{Visualization of the associated discrete dynamical systems of $\Mr{3}{4}$ and $\Mr{3}{0}$}
        \label{FigureMrab}
    \end{shaded}
\end{figure}
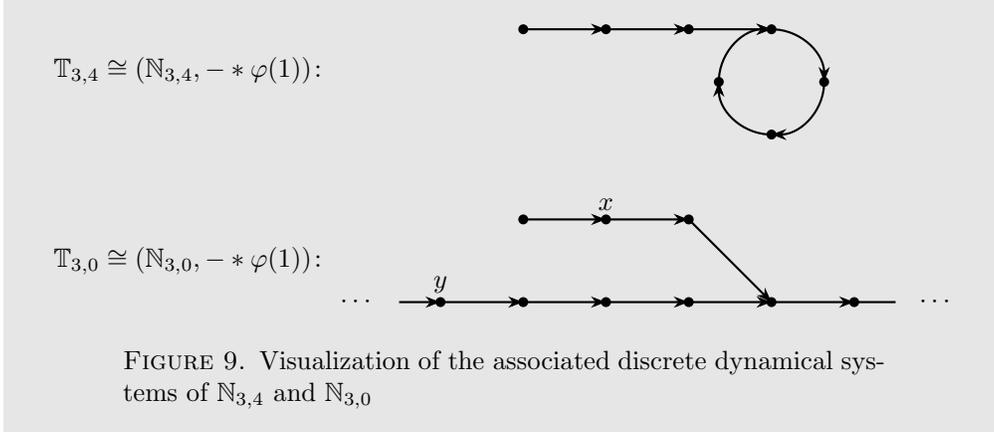

\begin{example}[Case $1/2$, surjective monoid homomorphisms]\label{ExampleCaseOneSurjection}
    The trivial epimorphisms are surjective monoid homomorphisms. For a non-negative integer $a\in \N$ and a positive integer $b\in \N\setminus\{0\}$, we define $\Mr{a}{b}$ as a quotient set divided by the equivalent relation defined as
    \[n\sim m \iff (n=m) \lor ((n,m\geq a) \land (n\equiv m \mod{b})).\]
    
    This set $\Mr{a}{b}$ admits the canonical monoid structure such that the canonical surjection $\ph\colon \N \to \Mr{a}{b}$ is a monoid homomorphism. The associated discrete dynamical system of $(\Mr{a}{b}, \ph(1))$ (Example \ref{ExampleMonoidDD}) is $\T{a}{b}$, visualized as Figure \ref{FigureMrab}.
    Every surjective (epi)morphism is of this form or the identity function $\id_{\N}$.
\end{example}

\begin{example}[Case $2/2$, Integers with a \dq{tail}]\label{ExampleCaseTwoIntegersWithATail}
    There is another type of epimorphism $\ph\colon \N \to \Mr{a}{0}$ such that $\ph$ is injective. 
    Informally, it is the group of integers $\Z (\cong\Z/0\Z )$ (instead of $\Z/b\Z$ for $b>0$) with a \dq{tail} of length $a$.
    The formal definition is as follows. Let $a$ be a non-negative integer. Then, $\Mr{a}{0}$ is defined as a submonoid of the product monoid $\Mr{a}{1} \times \Z$ such as
    \[\Mr{a}{0} \coloneqq \{([i],i)\mid i \in \N\}\cup \{([a],j)\mid j\in \Z\},\]
    where $[i]$ denotes the equivalence class to which $i$ belongs.
    For example, $\Mr{0}{0}$ is the group of integers $\Z$ and $\Mr{1}{0}$ is $\Z$ with a new formal identity element. The homomorphism $\ph$ is defined by $\ph(i) \coloneqq ([i],i)$.

    Just like the former example, the associated discrete dynamical system of $(\Mr{a}{0}, \ph(1))$ (Example \ref{ExampleMonoidDD}) is $\T{a}{0}$, visualized as Figure \ref{FigureMrab}.
\end{example}

Finally, we obtain the classification of essential quotients of $\DD$.
That all of the above monoids define different quotients can be seen by carefully following the construction in the paper \cite{el2002simultaneously}, but here we will prove it by describing the quotients explicitly.
As a preparation, we give an alternative definition of $\Mr{a}{0}$ via generators and relations.
\begin{lemma}\label{LemmaGeneratorandRelations}
For $a\in \N$, the monoid $\Mr{a}{0}$ is isomorphic to the monoid generated by two generators $x,y$ and the following three relations
\[
\gen{x,y \mid xy=yx, x^{a+1}y = x^a, xy^2 = y} 
\]
\end{lemma}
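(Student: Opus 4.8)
The plan is to write down an explicit isomorphism $\psi$ from the presented monoid $\gen{x,y\mid xy=yx,\ x^{a+1}y=x^{a},\ xy^{2}=y}$ onto $\Mr{a}{0}$, using the description of $\Mr{a}{0}$ as a submonoid of $\Mr{a}{1}\times\Z$ from Example~\ref{ExampleCaseTwoIntegersWithATail}. I would set $\psi(x)=\ph(1)=([1],1)$ and $\psi(y)=([a],-1)$ (these are exactly the two elements marked in Figure~\ref{FigureMrab}; throughout, $\Mr{a}{1}\times\Z$ is written additively and $[k]$ denotes the class of $k$ in $\Mr{a}{1}$, so that $[k]=[a]$ whenever $k\ge a$). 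The first thing to check is that these elements satisfy the three defining relations, so that $\psi$ extends to a well-defined monoid homomorphism: commutativity is inherited from the product; $\ph(a+1)+([a],-1)=([a],a+1)+([a],-1)=([a],a)=\ph(a)$ gives $\psi(x^{a+1}y)=\psi(x^{a})$; and $\ph(1)+2([a],-1)=([a],-1)$ gives $\psi(xy^{2})=\psi(y)$. Since the source monoid is commutative, it then remains only to prove that $\psi$ is a bijection.

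The heart of the argument is a normal-form reduction in the source monoid. As $x$ and $y$ commute and generate, every element has the form $x^{p}y^{q}$ with $p,q\ge 0$. Iterating the rewrite $x^{p}y^{q}\mapsto x^{p-1}y^{q-1}$ (legitimate whenever $p\ge 1$ and $q\ge 2$, by $xy^{2}=y$) until $q\le 1$ or $p=0$, and then, should an $x^{p}y$ with $p\ge a+1$ appear, applying $x^{a+1}y=x^{a}$ to replace it by $x^{p-1}$, I would show that every element equals a word from
\[
\mathcal{N}\;=\;\{x^{k}\mid k\ge 0\}\ \cup\ \{x^{p}y\mid 0\le p\le a\}\ \cup\ \{y^{q}\mid q\ge 2\}.
\]
Thus $\mathcal{N}$ surjects onto the source monoid. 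Next I would evaluate $\psi$ on these words inside $\Mr{a}{0}$: $\psi(x^{k})=\ph(k)$ (which is $([k],k)$ for $k<a$ and $([a],k)$ for $k\ge a$), $\psi(x^{p}y)=([a],p-1)$, and $\psi(y^{q})=([a],-q)$. These images are pairwise distinct --- the $x^{k}$ with $k<a$ give the $a$ tail elements, while on the cycle $\{([a],j)\mid j\in\Z\}$ the three families $x^{k}\ (k\ge a)$, $x^{p}y\ (0\le p\le a)$, $y^{q}\ (q\ge 2)$ occupy the disjoint ranges $j\ge a$, $-1\le j\le a-1$, $j\le -2$ --- and together they exhaust $\Mr{a}{0}=\{([i],i)\mid i\in\N\}\cup\{([a],j)\mid j\in\Z\}$. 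Distinctness of the images forces the words of $\mathcal{N}$ to be pairwise distinct in the source, so the surjection from $\mathcal{N}$ is a bijection and $\psi$ restricts to a bijection $\mathcal{N}\to\Mr{a}{0}$; hence $\psi$ is an isomorphism.

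I expect the normal-form reduction to be the only real obstacle: one has to be careful that no order of applying the three relations produces a shorter word outside $\mathcal{N}$, which amounts to watching the boundary $p=a$ and the interaction between the two length-decreasing relations; but confluence of the rewriting is not actually needed, since distinctness of the words in $\mathcal{N}$ is settled afterwards by $\psi$. The degenerate case $a=0$ deserves a sentence: there the presentation collapses to $\gen{x,y\mid xy=yx,\ xy=1}$, forcing $y$ to be a two-sided inverse of $x$, so the monoid is the group $\Z=\Mr{0}{0}$, in agreement with the general computation (where $\mathcal{N}$ becomes $\{x^{k}\mid k\ge 0\}\cup\{y^{q}\mid q\ge 1\}$). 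A more structural alternative, of comparable difficulty, is to verify the universal property of the presentation directly: for any commutative monoid $P$ carrying elements $\xi,\eta$ that satisfy the three relations, one checks that $x^{k}\mapsto\xi^{k}$, $x^{p}y\mapsto\xi^{p}\eta$, $y^{q}\mapsto\eta^{q}$ is well-defined on $\Mr{a}{0}$ (using the multiplication table read off above) and yields the required factorization through $\Mr{a}{0}$.
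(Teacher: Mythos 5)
Your proposal is correct and follows essentially the same route as the paper: define the homomorphism by sending $x\mapsto([1],1)$, $y\mapssto([a],-1)$, check the three relations, observe surjectivity from generation, and prove injectivity by reducing every word to a normal form and checking that the normal forms have pairwise distinct images in $\Mr{a}{0}$. Your normal-form set $\mathcal{N}$ is the same set of words as the paper's (the paper lists $x^k\ (k\ge 0)$, $y^k\ (k>0)$, $x^ky\ (0<k\le a)$; yours just moves $y$ from the second family to the third), and your extra care about why distinctness of normal forms follows from the computation of $\psi$ on them — rather than from any confluence argument — merely spells out what the paper leaves implicit.

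Wait — one typo above: that should read $y\mapsto([a],-1)$, not $y\mapssto$. Let me restate cleanly: your proposal is correct and takes the same approach as the paper, only more explicitly; the normal-form list and the injectivity-via-$\psi$ argument match the paper's sketch.
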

\begin{proof}
    Since $x\coloneqq ([1],1), y \coloneqq ([a],-1) \in \Mr{a}{0}$ satisfy the three equations (see Figure \ref{FigureMrab}), we obtain the induced monoid homomorphism 
    \[f\colon\gen{x,y \mid xy=yx, x^{a+1}y = x^a, xy^2 = y}  \to \Mr{a}{0}.\]
    It is enough to prove this homomorphism is an isomorphism.
    It is surjective, because $\Mr{a}{0}$ is generated by $x$ and $y$.
    The injectivity follows from the fact that every element of the generated monoid is equal to one of the following elements
    \begin{itemize}
        \item $x^k$ for $0\leq k$
        \item $y^k$ for $0<k$
        \item $x^k y$ for $0<k\leq a$.
    \end{itemize}
    \invmemo{See the minimal exponent.}
\end{proof}


\begin{corollary}[Classification of essential quotients]\label{CorollaryClasificationOfEssentialQuotients}
We obtained an isomorphism of posets
\[
\rNN \cup \{\infty\} \cong \mathrm{Quo}(\N) \cong \EQ_{\DD},
\]
where $\mathrm{Quo}(\N)$ denotes the poset of quotient objects of the monoid $\N$, and $\EQ_{\DD}$ denotes the poset of essential quotients of $\DD$.
    To be more specific, for a pair of natural numbers $(a,b)\in \rNN$, the corresponding essential quotient $Q$ is induced by the monoid homomorphism
    \[\ph\colon \N \to \Mr{a}{b}.\]
    More explicitly, a discrete dynamical system $\X=(X,f)$ belongs to $Q$ if and only if 
    \[
    \begin{cases}
        f^{a+b}=f^{a} & (b\neq 0)\\
        \text{the restriction of }f\text{ to }\Image{f^{a}}\text{ is bijective.}& (b=0). 
    \end{cases}
    \]
\end{corollary}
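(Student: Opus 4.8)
The plan is to factor the asserted chain of isomorphisms as
\[
\rNN \cup \{\infty\} \xrightarrow{\ \sim\ } \mathrm{Quo}(\N) \xrightarrow{\ \Phi\ } \EQ_{\DD},
\]
where $\Phi$ assigns to a quotient object $\ph\colon \N \to M$ the essential quotient cut out by the precomposition functor $\ph^{\ast}\colon\ps{M}\hookrightarrow\ps{\N}=\DD$. The left-hand isomorphism is the classification of monoid epimorphisms from $\N$ established in Appendix C (Theorem \ref{TheoremMonoidEpimorphismsFromN}): up to isomorphism of quotient objects these are the identity $\id_{\N}$, sitting at the adjoined top $\infty$, together with the $\ph\colon\N\to\Mr{a}{b}$ for $(a,b)\in\rNN$, and the divisibility-and-magnitude order on $\rNN\cup\{\infty\}$ is exactly the order on quotient objects. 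The map $\Phi$ is well defined and takes values in $\EQ_{\DD}$: by Proposition \ref{PropositionEquivalenceOfEpiLaxEpi} a monoid epimorphism is a lax epimorphism, so $\ph^{\ast}$ is fully faithful, and $\ph^{\ast}$ is left exact with adjoints $\ph_{!}\dashv\ph^{\ast}\dashv\ph_{\ast}$ on both sides, hence it is the inverse-image part of a connected essential geometric morphism, i.e.\ an essential quotient of $\DD$. It is surjective by Theorem \ref{TheoremEssentialOfPresheaf} together with the reduction (already carried out above) of any essential quotient of $\ps{\N}$ to one induced by a monoid homomorphism, which is then forced to be epic. So the only thing left is to show $\Phi$ is injective and order-reflecting, and I will do this by computing the essential image of each $\Phi(\ph)$ explicitly --- a computation which simultaneously produces the last displayed description in the statement.

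For the explicit images: a discrete dynamical system $(X,f)$, seen as an $\N$-set, lies in the essential image of $\ph^{\ast}\colon\ps{M}\hookrightarrow\ps{\N}$ precisely when its $\N$-action extends to an $M$-action, and by full faithfulness such an extension is unique. When $b>0$ the monoid $\Mr{a}{b}$ is the quotient $\N/\!\sim$ whose congruence is generated by the single relation $a+b\sim a$, so the action extends exactly when $f^{a+b}=f^{a}$. When $b=0$ I will use the presentation $\Mr{a}{0}\cong\gen{x,y\mid xy=yx,\ x^{a+1}y=x^{a},\ xy^{2}=y}$ of Lemma \ref{LemmaGeneratorandRelations}: an extension amounts to a commuting endofunction $g$ of $X$ with $f^{a+1}g=f^{a}$ and $fg^{2}=g$. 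Manipulating these three identities shows first that if such a $g$ exists then $\Image{f^{a}}=\Image{f^{a+1}}$ and $f$ is injective on $C\coloneqq\Image{f^{a}}$, so that $f$ restricts to a bijection of $C$; conversely, given that bijectivity, every element reaches $C$ in at most $a$ steps, the formula $g=(f|_{C})^{-(a+1)}\circ f^{a}$ is well defined and one checks it satisfies all three relations, and it is the only possibility. Together with $\Phi(\id_{\N})=\DD$ for the top element, this is exactly the case distinction in the statement.

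It remains to read off the order from these descriptions, which reduces to elementary bookkeeping with the decreasing filtration $\Image{f^{0}}\supseteq\Image{f^{1}}\supseteq\cdots$. From $f^{a+b}=f^{a}$ one gets $\Image{f^{n}}=\Image{f^{n+b}}$ for all $n\ge a$ and $f^{b}=\id$ on $\Image{f^{a}}$; this yields $\Phi(\Mr{a}{b})\subseteq\Phi(\Mr{a'}{b'})$ whenever $a\le a'$ and $b\mid b'$ (including $b'=0$, where the above shows $f$ is then a bijection of $\Image{f^{a'}}$), and every $\Phi(\Mr{a}{b})$ and $\Phi(\Mr{a}{0})$ sits inside $\Phi(\id_{\N})=\DD$, so the composite $\rNN\cup\{\infty\}\to\EQ_{\DD}$ is order-preserving. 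For order-reflection the connected system $\T{a}{b}$ serves as a separating witness: checking its $\Image{f^{n}}$-filtration shows $\T{a}{b}\in\Phi(\Mr{a'}{b'})$ if and only if $a'\ge a$ and $b\mid b'$, i.e.\ if and only if $(a,b)\le(a',b')$ in $\rNN\cup\{\infty\}$, while $\Nsucc$ (which lies in no $\Phi(\Mr{a}{b})$) witnesses that $\DD$ strictly dominates all of them. An order-preserving, order-reflecting surjection of posets is an isomorphism, so $\Phi$ and the whole chain are order isomorphisms.

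The main obstacle is the $b=0$ computation in the second paragraph: verifying that the candidate extension $g=(f|_{C})^{-(a+1)}\circ f^{a}$ really satisfies $xy=yx$, $x^{a+1}y=x^{a}$ and $xy^{2}=y$, that it is forced, and --- most delicately --- that no extension exists unless $f$ is bijective on $C$, where the commutativity relation $fg=gf$ is precisely what rules out one-sided inverses such as the one on $(\N,\text{predecessor})$. Everything else is routine bookkeeping with the filtration $\Image{f^{n}}$ and with reductions already in place.
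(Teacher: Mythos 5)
Your argument is correct and follows essentially the same route as the paper: the isomorphism $\mathrm{Quo}(\N)\cong\EQ_{\DD}$ is set up via Theorem~\ref{TheoremEssentialOfPresheaf}, Proposition~\ref{PropositionEquivalenceOfEpiLaxEpi}, and Theorem~\ref{TheoremMonoidEpimorphismsFromN}, and injectivity/order-reflection is deduced by extracting the explicit membership criterion from Lemma~\ref{LemmaGeneratorandRelations}, which is precisely what the paper's (terse) proof indicates. You have merely spelled out the $b=0$ verification in more detail than the paper, but the plan is the same.
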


\begin{proof}
    The last explicit description is obtained by Lemma \ref{LemmaGeneratorandRelations}.
    \invmemo{Write more}
\end{proof}

We obtain two immediate corollaries.
\begin{corollary}\label{CorollaryEssentialorHyperconnected}
    Every quotient of $\DD$ is either hyperconnected or essential.
\end{corollary}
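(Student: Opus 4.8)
The plan is to deduce this directly from the classification already obtained, rather than re-analyzing geometric morphisms from scratch. By Corollary~\ref{CorollaryElementaryInAdvance} (equivalently, by reading off the case division in the proof of Theorem~\ref{TheoremMainTheorem}), every quotient of $\DD$ is one of three kinds: $\DD$ itself; a quotient constructed by Proposition~\ref{PropositionConstructionPrimeQuotients}; or a quotient of eventual bijections constructed by Proposition~\ref{PropositionEventualBijConstruction}. So it suffices to check that each of these three families consists of quotients that are hyperconnected or essential.

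First I would dispatch the two easy cases. The quotient $\DD$ itself corresponds to the identity connected geometric morphism, which is trivially essential (and it also appears among the hyperconnected quotients in Corollary~\ref{CorollaryHyperconnectedQuotientsOfDD}). Each quotient built by Proposition~\ref{PropositionConstructionPrimeQuotients} is hyperconnected: as already observed in the remark following that proposition, its right adjoint sends $X$ to the subobject of elements satisfying the stated period/height conditions, so the quotient is closed under subobjects, which is exactly hyperconnectedness. The remaining point is that the quotients of Proposition~\ref{PropositionEventualBijConstruction} are essential; for this I would invoke Corollary~\ref{CorollaryClasificationOfEssentialQuotients}: for $a\in\N$, the defining condition in Proposition~\ref{PropositionEventualBijConstruction} — that the restriction of $f$ to $\Image{f^{a}}$ be bijective — is precisely the condition describing the essential quotient associated to $(a,0)\in\rNN$, namely the one induced by $\ph\colon\N\to\Mr{a}{0}$. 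Hence that quotient is essential. Alternatively, one may short-circuit all of this by splitting on whether $(0,0)$ lies in the ideal $I\subset\rNN$ attached to $Q$ by Theorem~\ref{TheoremMainTheorem}: if $(0,0)\in I$ then $Q$ is $\DD$ or a Proposition~\ref{PropositionEventualBijConstruction} quotient, hence essential; if $(0,0)\notin I$ then $Q$ comes from Proposition~\ref{PropositionConstructionPrimeQuotients}, hence hyperconnected.

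I do not expect a genuine obstacle here: the corollary is a bookkeeping consequence of the main classification together with the two facts already recorded — that the ``prime'' quotients are subobject-closed, and that the ``eventual bijection'' quotients coincide with the essential quotients attached to $(a,0)$. The only point requiring a moment's care is that the two descriptions of the eventual-bijection quotient, the one in Proposition~\ref{PropositionEventualBijConstruction} and the monoid-theoretic one in Corollary~\ref{CorollaryClasificationOfEssentialQuotients}, really agree; but this is immediate, since both are phrased by the same condition on $f|_{\Image{f^{a}}}$.
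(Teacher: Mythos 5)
Your proof is correct and follows essentially the route the paper intends: the corollary is presented immediately after Corollary~\ref{CorollaryClasificationOfEssentialQuotients} precisely so that one reads off from the classification (via Corollary~\ref{CorollaryElementaryInAdvance}, or equivalently the case split on whether $(0,0)\in I$ in Theorem~\ref{TheoremMainTheorem}) that the ``prime'' quotients are hyperconnected and the remaining ones are essential. Your additional check that the condition in Proposition~\ref{PropositionEventualBijConstruction} literally matches the $b=0$ case of Corollary~\ref{CorollaryClasificationOfEssentialQuotients} closes the only point that could have been left implicit.
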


\begin{corollary}\label{CorollaryEssentialsAreDense}
If two quotients of $\DD$ contain exactly the same essential quotients, then they are equal to each other.
\end{corollary}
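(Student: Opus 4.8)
The plan is to read the statement off the explicit classification already obtained. By Theorem \ref{TheoremMainTheorem} (together with Proposition \ref{PropositionChainOfEmbeddings} and Corollary \ref{CorollaryDDprequotientIsQuotient}), every quotient $Q$ of $\DD$ is determined by the ideal
\[
I_Q \;=\; \{\,(a,b)\in\rNN \mid \T{a}{b}\in Q\,\}\;\subset\;\rNN ,
\]
and for two quotients one has $Q_1\subseteq Q_2$, as classes of objects in the sense of Remark \ref{RemarkQuotientToposesAsClassesOfObjects}, if and only if $I_{Q_1}\subseteq I_{Q_2}$. So it suffices to show that $I_Q$ is recovered from the set of essential quotients contained in $Q$.

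First I would pin down which quotients the essential ones are. By Corollary \ref{CorollaryClasificationOfEssentialQuotients}, the essential quotients of $\DD$ are $\DD$ itself together with, for each $(a,b)\in\rNN$, the quotient $\gen{\T{a}{b}}$ generated by $\T{a}{b}$ (the one induced by $\N\to\Mr{a}{b}$). Under the isomorphism $\Q_{\DD}\cong\Ideal(\rNN)$ the quotient $\gen{\T{a}{b}}$ corresponds to the principal ideal $\{(a',b')\in\rNN\mid a'\le a \text{ and } b'\mid b\}$: indeed $\T{a}{b}$ has height $a$ and period $b$, and by Propositions \ref{PropositionOrderPreserivng} and \ref{PropositionSurjection} the connected objects lying in $\gen{\T{a}{b}}$ are exactly the $\T{a'}{b'}$ with $(a',b')$ below $(a,b)$ in $\NN$ — and all such pairs lie in $\rNN$. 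Likewise $\DD$ corresponds to the top ideal $\rNN$.

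Next, since an ideal is downward closed, for any quotient $Q$ and any $(a,b)\in\rNN$ we have
\[
\gen{\T{a}{b}}\subseteq Q
\;\iff\; \{(a',b')\mid a'\le a,\ b'\mid b\}\subseteq I_Q
\;\iff\; (a,b)\in I_Q .
\]
Hence the family of essential quotients contained in $Q$ already encodes $\{(a,b)\mid (a,b)\in I_Q\}=I_Q$; equivalently $I_Q=\bigcup\{\,I_R\mid R\in\EQ_{\DD},\ R\subseteq Q\,\}$, and since that union is itself an ideal (namely $I_Q$), $Q$ is the supremum in $\Q_{\DD}$ of the essential quotients it contains. In particular, if $Q_1$ and $Q_2$ contain exactly the same essential quotients then $I_{Q_1}=I_{Q_2}$, so $Q_1=Q_2$ by Theorem \ref{TheoremMainTheorem}.

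The argument is short given the main classification; the only step needing a little care is the identification of $\gen{\T{a}{b}}$ with the corresponding principal ideal of $\rNN$, which I would obtain by combining Corollary \ref{CorollaryClasificationOfEssentialQuotients} with the structure of $\Gc_{\DD}\cong\NN$ from Proposition \ref{PropositionConnectedIsomorphic}. Beyond this bookkeeping I expect no genuine obstacle.
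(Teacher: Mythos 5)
Your proof is correct, and it is essentially the argument the paper leaves implicit when it calls this an ``immediate corollary'': you identify the essential quotients (other than $\DD$ itself) with the principal ideals $\downarrow(a,b)\subset\rNN$ via Corollary \ref{CorollaryClasificationOfEssentialQuotients}, and then observe that an ideal in the sense of Theorem \ref{TheoremMainTheorem} is determined by the set of principal ideals it contains. The only piece of bookkeeping worth spelling out is the identification $\gen{\T{a}{b}}$ with the quotient induced by $\N\to\Mr{a}{b}$, which you rightly flag and which follows from the explicit membership criterion in Corollary \ref{CorollaryClasificationOfEssentialQuotients} together with Proposition \ref{PropositionChainOfEmbeddings}.
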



\appendix
\section{Embeddings and Dense order-preserving functions}\label{AppendixSectionDensity}
In this appendix, we briefly recall the notion of embeddings and dense order-preserving functions.

\begin{definition}[Embedding]
    An order-preserving fuction $f\colon P\to Q$ between two posets $P,Q$ is an \emph{embedding} if for any $p,p'\in P$
    \[p\leq p' \iff f(p) \leq f(p').\]
\end{definition}

The following two lemmas are easily proven.
\begin{lemma}
    An embedding is injective.
\end{lemma}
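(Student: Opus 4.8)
The plan is to deduce injectivity directly from antisymmetry of the partial order on $P$, using nothing beyond the defining biconditional of an embedding. So first I would take $p,p' \in P$ with $f(p)=f(p')$ and aim to conclude $p=p'$.

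From $f(p)=f(p')$ we get both $f(p)\leq f(p')$ and $f(p')\leq f(p)$ in $Q$. Applying the embedding condition $p\leq p' \iff f(p)\leq f(p')$ in the ``$\Longleftarrow$'' direction to each of these inequalities yields $p\leq p'$ and $p'\leq p$ in $P$. Since $P$ is a poset, $\leq$ is antisymmetric, so $p=p'$, which is exactly injectivity.

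The step to watch is really just the implicit use of antisymmetry: the argument would fail for a mere preorder, where an ``embedding'' is only injective up to the equivalence generated by $\leq$. Since the lemma is stated for posets, there is no genuine obstacle here; the proof is a one-line consequence of the definitions.
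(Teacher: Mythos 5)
Your proof is correct and is the standard argument; the paper simply omits it, noting only that the lemma is ``easily proven.'' Your remark about antisymmetry (and the failure for preorders) is a sensible observation, and indeed the paper's later definitions are careful to pass to the poset quotient $\G_{\E}$ rather than work with the raw preorder $\ogeq$ precisely so that such embedding lemmas apply.
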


\begin{lemma}\label{LemmaEmbeddingSupremum}
    An embedding reflects supremums. In detail, for an order-preserving function $f\colon P \to Q$, if $f(p)$ is a supremum of $\{f(p_\lambda)\}_{\lambda\in \Lambda}$, then $p$ is a supremum of $\{p_\lambda\}_{\lambda\in \Lambda}$.
\end{lemma}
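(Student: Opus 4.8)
The plan is to unwind the definitions of \emph{embedding} and \emph{supremum} and check the two defining properties of a least upper bound directly. Throughout, write $f\colon P\to Q$ for the order-preserving function assumed to be an embedding, and suppose $f(p)$ is a supremum of $\{f(p_\lambda)\}_{\lambda\in\Lambda}$ in $Q$.

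First I would verify that $p$ is an upper bound of $\{p_\lambda\}_{\lambda\in\Lambda}$ in $P$. Since $f(p)$ is in particular an upper bound of $\{f(p_\lambda)\}_{\lambda\in\Lambda}$, we have $f(p_\lambda)\le f(p)$ for every $\lambda$; because $f$ is an embedding, the inequality $f(p_\lambda)\le f(p)$ forces $p_\lambda\le p$. Hence $p$ lies above every $p_\lambda$.

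Next I would check minimality. Let $q\in P$ be an arbitrary upper bound of $\{p_\lambda\}_{\lambda\in\Lambda}$, so $p_\lambda\le q$ for all $\lambda$. Applying $f$, which is order-preserving, gives $f(p_\lambda)\le f(q)$ for all $\lambda$, i.e.\ $f(q)$ is an upper bound of $\{f(p_\lambda)\}_{\lambda\in\Lambda}$ in $Q$. Since $f(p)$ is the least such upper bound, $f(p)\le f(q)$; and since $f$ is an embedding, this yields $p\le q$. Thus $p$ is below every upper bound of $\{p_\lambda\}_{\lambda\in\Lambda}$, so $p=\sup_\lambda p_\lambda$.

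There is no real obstacle here: the only subtlety is remembering that "embedding" gives the reflection of $\le$ in both directions, which is used once to transport an inequality from $Q$ back to $P$ in each of the two steps; everything else is the bare definition of a least upper bound. (The companion fact that $f$ reflects \emph{infima}, if needed elsewhere, would follow by the dual argument.)
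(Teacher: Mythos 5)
Your proof is correct, and since the paper explicitly omits a proof of this lemma (marking it as ``easily proven''), there is no textual proof to compare against; your direct verification of the two defining conditions of a least upper bound, using the embedding property to transport inequalities from $Q$ back to $P$, is exactly the standard argument the authors had in mind.
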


\begin{definition}[Density of an order-preserving function]\label{DefinitionDense}
    Let $\Dw{P}$ denote the poset of downward closed subsets of a poset $P$. An order-preserving function $f: P\to Q$ between two posets (or possibly large classes of elements) $P, Q$ is said to be \textit{dense} if the function
    \[y^{f}\colon Q \ni q \mapsto \{p\in P\mid f(p)\leq q\}\in \Dw{P}\]
    is embedding.
\end{definition}

\begin{example}
    For a subset $P \subset \mathbb{R}$, the embedding $\iota\colon P \to \mathbb{R}$ is dense if and only if $P$ is dense as a subspace of $\mathbb{R}$, as a topological space with the usual topology. In the case where $P=\mathbb{Q}$, 
    the associated function $y^{\iota}\colon \mathbb{R}\to \Dw{\mathbb{Q}}$ is what's known as the Dedekind cut.
\end{example}

\begin{proposition}
    For an order-preserving function, the following conditions are equivalent.
    \begin{enumerate}
        \item $f$ is dense
        \item For every $q\in Q$, $q$ is the supremum of $f(\{p\in P\mid f(p)\leq q\})$($=f(y^{f}(q))$).
    \end{enumerate}
\end{proposition}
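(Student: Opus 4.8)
The plan is to prove the two implications directly from the definitions, since the statement is really just an unwinding of what it means for the comparison map $y^f\colon Q\to\Dw{P}$ to reflect the order. First I would record two bookkeeping facts. Fact one: for each $q\in Q$ the set $y^f(q)=\{p\in P\mid f(p)\leq q\}$ is downward closed (because $f$ is order-preserving), so $y^f$ really does take values in $\Dw{P}$, and $y^f$ is automatically order-preserving; hence ``$y^f$ is an embedding'' is equivalent to the single implication $y^f(q)\subseteq y^f(q')\implies q\leq q'$. Fact two: $q$ is always an upper bound of the set $f(y^f(q))$, simply because every $p\in y^f(q)$ satisfies $f(p)\leq q$ by definition of $y^f$.

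For $(1)\implies(2)$, fix $q\in Q$; by Fact two it suffices to check that $q$ is the \emph{least} upper bound of $f(y^f(q))$. So let $q'$ be any upper bound of $f(y^f(q))$. Then every $p\in y^f(q)$ has $f(p)\leq q'$, i.e.\ $p\in y^f(q')$, so $y^f(q)\subseteq y^f(q')$; density of $f$ (the embedding property of $y^f$) then gives $q\leq q'$. Hence $q=\sup f(y^f(q))$.

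For $(2)\implies(1)$, suppose $y^f(q)\subseteq y^f(q')$. Then $f(y^f(q))\subseteq f(y^f(q'))$, and since by hypothesis $q'=\sup f(y^f(q'))$ is in particular an upper bound of $f(y^f(q'))$, it is also an upper bound of the smaller set $f(y^f(q))$. But $q=\sup f(y^f(q))$ by hypothesis, so $q\leq q'$. This is exactly the order-reflection required for $y^f$ to be an embedding, i.e.\ for $f$ to be dense.

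I do not expect a real obstacle here: the argument is a short choice-free manipulation of upper bounds. The only point needing a moment's care is the ``possibly large classes'' caveat in the statement, but nothing in the proof forms a class bigger than the hom-posets already in play, and we \emph{prove} the suprema in question exist rather than assuming any completeness, so the reasoning is valid verbatim in the large setting.
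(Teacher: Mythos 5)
Your proof is correct, and since the paper states this proposition without proof, there is nothing to compare against: your argument is the natural unwinding of the definitions of \dq{embedding} and \dq{dense}, with the key observation (your Fact one) that order-preservation of $y^f$ makes the embedding condition reduce to the single implication $y^f(q)\subseteq y^f(q')\implies q\leq q'$, after which both directions are short verifications that $q$ is the least upper bound of $f(y^f(q))$.
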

For those who know Kan extension, the second condition is equivalent to saying that 
\[
\begin{tikzcd}
    P\ar[rr,"f"]\ar[rd,"f"']&\ar[d,Rightarrow,"\id_{f}"]&Q\\
    &Q\ar[ru,"\id_{Q}"']&
\end{tikzcd}
\] is a pointwise left Kan extension.

The next proposition is easy to prove by Definition \ref{DefinitionDense}.
\begin{proposition}\label{PropositionSmallnessComparison}
    If $f:P\to Q$ is dense, then $\abs{Q}\leq 2^{\abs{P}}$. 
    In particular, for a dense and injective $f\colon P\to Q$ between possibly large $P$ and $Q$, $P$ is small if and only if $Q$ is small.
\end{proposition}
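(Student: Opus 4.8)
The plan is simply to unwind the definition of density. By Definition \ref{DefinitionDense}, saying that $f\colon P\to Q$ is dense means exactly that the order-preserving function
\[
y^{f}\colon Q \longrightarrow \Dw{P}\colon q \mapsto \{p\in P\mid f(p)\leq q\}
\]
is an embedding. Since an embedding of posets is injective (as recorded immediately above in this appendix), $y^{f}$ is an injection of $Q$ into $\Dw{P}$. Now $\Dw{P}$, the poset of downward closed subsets of $P$, is a sub(class of the) power class of $P$, so $\abs{\Dw{P}}\leq 2^{\abs{P}}$; composing with the injection $y^{f}$ gives $\abs{Q}\leq \abs{\Dw{P}}\leq 2^{\abs{P}}$, which is the first assertion.

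For the \dq{in particular} clause one additionally assumes $f$ is injective and argues smallness in both directions. If $P$ is small, then $2^{\abs{P}}$ is (the cardinality of) an honest set, so the injection $y^{f}\colon Q \to \Dw{P}$ realizes $Q$ as in bijection with a subclass of a set, whence $Q$ is small. Conversely, if $Q$ is small, the injection $f\colon P\to Q$ realizes $P$ as a subclass of a set, so $P$ is small as well.

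There is no combinatorial obstacle in this proposition; the entire content is the observation that a dense map factors (via $y^{f}$) through $\Dw{P}$, together with the trivial cardinal bound $\abs{\Dw{P}}\leq 2^{\abs{P}}$. The only point that deserves a sentence of care is the set-theoretic bookkeeping around proper classes, i.e., reading \dq{small} as \dq{is a set} and noting that a subclass of a set is a set and that the power set of a set is a set; once this is fixed, both implications are immediate.
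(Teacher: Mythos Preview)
Your argument is correct and matches the paper's intended approach: the paper gives no detailed proof, remarking only that the proposition is easy to prove from Definition~\ref{DefinitionDense}, and your unwinding of that definition via the injection $y^{f}\colon Q\hookrightarrow \Dw{P}\subset 2^{P}$ is exactly what is meant.
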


\section{Some classes of Geometric morphisms}\label{AppendixGeometricMorphisms}
In this appendix, we will briefly recall the notion of geometric morphisms and some classes of them. For details, see \cite{johnstone2002sketchesv1, johnstone2002sketchesv2}.

\begin{definition}[Geometric morphism]
    A \emph{geometric morphism} $f$ from a topos $\E$ to another topos $\F$ is a pair of adjoint functors
    \[
\begin{tikzcd}[column sep = 10pt]
    \E \ar[rr, shift right=6pt, "f_{\ast}"']&\rotatebox{90}{$\vdash$}&\F,\ar[ll, shift right =6pt, "f^{\ast}"']
\end{tikzcd}
\]
whose left adjoint preserves finite limits.
\end{definition}

There are many classes of geometric morphisms, defined in various contexts in topos theory. In this paper, our main focus has been on \emph{connected} geometric morphism:
\begin{definition}[Connected geometric morphism]\label{DefinitionConnectedGM}
    A geometric morphism $f\colon \E\to \F$ is \emph{connected}, if the left adjoint $f^{\ast}$ is fully faithful.
\end{definition}
As explained in subsection \ref{subsectionDefQuotients}, a quotient of a topos is a connected geometric morphism from a topos.

What we will discuss in this section are two classes of geometric morphisms, \emph{hyperconnected} and \emph{essential} geometric morphisms.

\begin{definition}[Hyperconnected geometric morphisms]\label{DefinitionHyperconnectedQuotients}
    A geometric morphism $f\colon \E\to \F$ is \emph{hyperconnected} if $f$ is connected and the essential image of $f^{\ast}$ is closed under taking subobjects.
\end{definition}


\begin{definition}[Essential geometric morphisms]\label{DefinitionEssentialQuotients}
    A geometric morphism $f\colon \E\to \F$ is \emph{essential} if $f^{\ast}$ has a left adjoint.
\end{definition}
\section{\texorpdfstring{Monoid epimorphisms from $\N$}{Monoid epimorphisms from the monoid of natural numbers}}\label{AppendixMonoidEpimorphismsFromN}
\invmemo{Write on ring theory}
The goal of this appendix is to prove Theorem \ref{TheoremMonoidEpimorphismsFromN}, which classifies all monoid epimorphisms from $\N$. 
Our argument is based on researches on monoid epimorphisms and lax epimorphisms \cite{adamek2001functors, el2002simultaneously, isbell1966epimorphisms, nunes2022lax}.
In Proposition \ref{PropositionEquivalenceOfEpiLaxEpi}, we observed that if $\ph\colon \N \to M$ is a monoid epimorphism, then $M$ is commutative.

Let us start by recalling two properties of an element of a (commutative) monoid $x\in M$.
\begin{description}
    \item[Invertible] For any $y \in M$, there exists $y'\in M$ such that $y'x=y$. 
    (Considering $y=1_M$, this is equivalent to the existence of an inverse element.)
    \item[Absorbing] For any $y\in M$, $yx=x$.
\end{description}
The next proposition utilizes weaker variants of the above two propositions, which we call \textit{\qi} and \textit{\qa}.
\begin{proposition}
    Let $M$ be a commutative monoid, $\ph\colon \N \to M$ be a monoid homomorphism, and $x$ be $\ph (1)$. Then $\ph$ is an epimorphism if and only if $x$ satisfies the following two properties.
    \begin{description}
        \item[\Qi] For any $y \in M\setminus \{1_{M}\}$, there exists $y'\in M$ such that $y'x=y$.
        \item[\Qa] For any $y\in M$, there are $n,m\in \N$ such that $yx^n=x^m$.
    \end{description}
\end{proposition}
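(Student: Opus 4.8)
The plan is to reduce everything to the tensor criterion already established. Since $\N$ is commutative, Proposition~\ref{PropositionEquivalenceOfEpiLaxEpi} applies with $N=\N$; by the equivalence of conditions \ref{ConditionMonoidsEpi}, \ref{ConditionTensor} and \ref{ConditionCoend} there, $\ph$ is an epimorphism exactly when $m\tn 1_M=1_M\tn m$ in $M\tn M$ for every $m\in M$. Unwinding Definition~\ref{DefinitionTensor} --- and using commutativity, which lets us restrict to the generating moves indexed by $1\in\N$ --- this means precisely that the ordered pair $(m,1_M)$ is joined to $(1_M,m)$ by a finite zigzag of elementary moves $(ax,b)\leftrightarrow(a,bx)$ with $a,b\in M$, where $x=\ph(1)$. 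So the statement to be proved becomes: every $(m,1_M)$ is zigzag-equivalent to $(1_M,m)$ if and only if $x$ is \qi{} and \qa.

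For the ``only if'' direction: if $x$ is invertible there is nothing to check for \qi{} (take $y'=yx^{-1}$), and if $x$ is not invertible then for any $y\neq 1_M$ admitting no $y'$ with $y'x=y$, no elementary move whatsoever applies to $(y,1_M)$ (a move of the first kind needs $x\mid y$ on the left; a move of the second kind needs $x$ invertible), so the zigzag class of $(y,1_M)$ is the singleton $\{(y,1_M)\}$, which does not contain $(1_M,y)$ --- contradiction; hence \qi{} holds. For \qa{} I would run any zigzag $(y,1_M)=(p_0,q_0),\dots,(p_k,q_k)=(1_M,y)$ and prove by induction on $i$ that $q_ix^{b_i}=x^{a_i}$ for suitable $a_i,b_i\in\N$: this holds with $a_0=b_0=0$, a move of the first kind sends $(a_i,b_i)$ to $(a_i+1,b_i)$, and a move of the second kind sends it to $(a_i,b_i+1)$. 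At $i=k$ this reads $yx^{b_k}=x^{a_k}$, which is \qa.

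The ``if'' direction is the heart of the proof. Assume $x$ is \qi{} and \qa{} and fix $m\in M$; we may assume $m\neq 1_M$ and $m$ is not a power of $x$, since otherwise $(m,1_M)\sim(1_M,m)$ is immediate. Using \qi{} repeatedly, write $m=x^kz_k$ with $z_k\neq 1_M$ for every $k\geq 0$, and using \qa{} fix $n,p$ with $mx^n=x^p$, where $n\geq 1$ and (if $x$ is not invertible) $p\geq 1$. The zigzag I would construct has three stages: first, $k$ moves of the first kind peel powers of $x$ off the left to reach $(z_k,x^k)$ for a suitably large $k$; then one move of the second kind rewrites the power $x^k$ in the second slot via $x^k=x^{k-p}\cdot mx^n=(x^{k-p}mx^{n-1})x$, injecting a factor of $m$ on the right; then a bookkeeping sequence of moves shuffles this $m$ and the accumulated powers of $x$ across and, invoking $mx^n=x^p$ once more, lands at $(x^p,z_p)$, whence $p$ further moves of the first kind give $(1_M,m)$.

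The obstacle I expect to be the real work is exactly this construction. The tempting shortcut --- use \qi{} to rewrite $m$ as a power of $x$ and then slide everything across --- fails, because an infinitely $x$-divisible element need not be a power of $x$ (this is visible already in $\Mr{a}{0}$), so the zigzag cannot merely ``descend''; it must be routed through the \qa-relation, and one has to choose $k$ large enough and keep careful track of both slots so that the loop closes. Equivalently, this amounts to verifying that the canonical map $M\tn M\to M$ of \ref{ConditionCoend} is injective, i.e.\ that $a\tn b=ab\tn 1_M$ for all $a,b\in M$; the combinatorics is the same.
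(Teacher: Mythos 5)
Your reduction to the tensor criterion of Proposition~\ref{PropositionEquivalenceOfEpiLaxEpi} and your translation of $m\tn 1_M=1_M\tn m$ into a zigzag/graph-connectivity statement are exactly the paper's set-up (the paper phrases it as a graph $G_M$ on $M\times M$). Your \dq{only if} direction is correct and is, if anything, a slightly more explicit version of the paper's argument: the paper also observes that a vertex $(y,1_M)$ adjacent to some edge forces quasi-invertibility, and defines the relation $y\approx y'\iff\exists n,m,\ yx^n=y'x^m$ and shows it is constant on connected components; your bookkeeping of $(a_i,b_i)$ along the zigzag is the same induction written out.

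The genuine gap is in the \dq{if} direction, and you have flagged it yourself: the three-stage zigzag is only sketched, the choice of $k$ and the \dq{bookkeeping sequence of moves} are not pinned down, and you explicitly call this \dq{the real work} and \dq{the obstacle.} As written, the argument does not establish that the zigzag actually closes up at $(1_M,m)$; one has to verify, for instance, that the accumulated exponents of $x$ in the right slot can be transported across and cancelled using $mx^n=x^p$ at exactly the right moment, which is not automatic. The paper avoids this entirely with a short direct computation in $M\tn M$: from $yx^a=x^b$ and $y_0x^b=y$ (both supplied by \qa\ and repeated \qi, and available because $y\notin\ph(\N)$), one derives $x^b=y_0x^{a+b}$ and then
\[
y\tn 1_M = y_0x^b\tn 1_M = y_0\tn x^b = y_0\tn x^{a+b}y_0 = y_0x^{a+b}\tn y_0 = x^b\tn y_0 = 1_M\tn x^b y_0 = 1_M\tn y,
\]
a seven-step chain using only the generating relation and commutativity. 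In zigzag language this is precisely: peel $x^b$ from the left to reach $(y_0,x^b)=(y_0,\,y_0x^{a+b})$, peel $x^{a+b}$ from the right, then peel $x^b$ from the left again; your sketch routes through an auxiliary $x^k$ and an extra insertion of $m$, which makes it harder to see that the loop closes. So the structure of your proof matches the paper's, but the \dq{if} direction needs to be completed; the clean way to do so is to discover (or reproduce) the identity $x^b=y_0x^{a+b}$ and run the displayed chain.
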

\begin{proof}
    We prove that it is equivalent to the condition \ref{ConditionTensor} in Proposition \ref{PropositionEquivalenceOfEpiLaxEpi}.
    First, we will prove \dq{if} part. 
    Take an arbitrary element $y \in M$.
    If $y\in \ph(\N)$, then this is easy.

    Suppose $y\notin \ph(\N)$. Then by the assumptions, we can take $a,b\in \N$ such that $yx^a = x^b$ and $y_0 \in M$ such that $y_0 x^b = y$. Then we have
    \begin{align*}
        y\tn 1_M 
            &=y_0 x^b \tn 1_M = y_0\tn  x^b \\
            &=y_0 \tn x^{a+b}y_0  = y_0  x^{a+b} \tn y_0 \\
            &=x^{b} \tn y_0  =1_{M} \tn x^{b} y_0 \\
            &=1_{M} \tn y .
    \end{align*}
    \invmemo{This proof can be intuitively visualized, but it's a bit hard to draw...}

    Next, we will prove \dq{only if} part. Consider the graph $G_M$ with the vertices set $M\times M$ and edges between 
    \[
    \begin{tikzcd}
        (y,xz)\ar[r,dash]&(yx,z)
    \end{tikzcd}
    \]
    for each $y,z \in M$. By the definition of $M \otimes_N M$ (Definition \ref{DefinitionTensor}), the connected components of $G_M$ correspond to the elements of $M \otimes_N M$.
    \begin{description}
        \item[\Qi] Take an arbitrary $y\in M\setminus \{1_M\}$.
        Since $(y, 1_M)$ and $(1_M,y)$ are different vertices in the same connected component, there exists at least one edge that is adjacent to $(y,1_M)$. This implies that either $x$ is invertible, or there exists $y'$ such that $y'x=y$.
        \item[\Qa] Consider an equivalence relation $\approx$, defined by
        \[y\approx y' \iff \exists n,m \in \N,\ yx^n = y'x^m.\]
        If $(y,z)$ and $(y',z')$ are in the same connected component, we can prove $y \approx y'$, by the induction of the length of the shortest path between two vertices.
        In particular, since $y\tn 1_M = 1_M \tn y$, we have $y \approx 1_M$.
    \end{description}
\end{proof}

\begin{remark}\label{RemarkQAisConectedness}
    For a monoid $M$ and its element $x\in M$, $x$ is \qa, if and only if its associated discrete dynamical system $(M,-\ast x)$ is connected.
\end{remark}

\begin{lemma}\label{LemmaInjOrSurj}
    Every epimorphism $\ph\colon \N \to M$ is either injective or surjective.
\end{lemma}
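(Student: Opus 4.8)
The plan is to invoke the characterization from the preceding proposition. Since $\ph$ is an epimorphism, $M$ is commutative (by Proposition~\ref{PropositionEquivalenceOfEpiLaxEpi}) and $x \coloneqq \ph(1)$ is both \qi{} and \qa{}. I would assume $\ph$ is not injective and show that it must then be surjective.

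If $\ph$ is not injective, pick $n \geq 0$ minimal and then $b > 0$ minimal with $x^{n} = x^{n+b}$. Then $\ph(\N) = \{1_M, x, \dots, x^{n+b-1}\}$ is a finite cyclic monoid of the usual ``$\rho$-shape'', and its ``loop'' $L \coloneqq \{x^{n}, \dots, x^{n+b-1}\}$ is a cyclic group of order $b$: multiplication by $x$ permutes it cyclically, and its neutral element is $e = x^{r}$, where $r$ is the least multiple of $b$ with $r \geq n$ (so in particular $b \mid r$). The remaining task is to show that an arbitrary $y \in M$ lies in $\ph(\N)$.

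Fix $y \in M$ and apply \qi{} repeatedly, building a chain $y = y_{0}, y_{1}, y_{2}, \dots$ with $y_{i+1}x = y_{i}$, so that $y_{i}x^{i} = y$ for all $i$; at each step either $y_{i} = 1_M$ (and the chain stops) or \qi{} supplies the next term. In the terminating case $y = y_{k}x^{k} = x^{k} \in \ph(\N)$ and we are done. In the non-terminating case $y_{n}$ exists and $y = y_{n}x^{n} = y_{n}x^{n+b} = y x^{b}$, hence $y x^{cb} = y$ for every $c \geq 0$; in particular $y e = y$. Now use \qa{} to get $p, q$ with $y x^{p} = x^{q}$, and multiply both sides by $x^{cb}$ for $c$ large enough that $P \coloneqq p + cb$ and $Q \coloneqq q + cb$ both exceed $n$; then $y x^{P} = x^{Q}$ with $x^{P}, x^{Q} \in L$. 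Choosing $w \in L$ with $x^{P}w = e$, I conclude $y = y e = (y x^{P})w = x^{Q}w \in L \subseteq \ph(\N)$. Thus every element of $M$ lies in $\ph(\N)$, so $\ph$ is surjective.

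The one point deserving care is the step ``$y e = y$'': the neutral element $e$ of $L$ does \emph{not} fix the tail elements of $\ph(\N)$, so this equality is not automatic. It works because that step is only reached when the \qi{}-chain of $y$ fails to terminate, and this forces $y x^{b} = y$ (a contradiction with minimality of $n,b$ would otherwise arise), whence $y e = y x^{r} = y$ since $b \mid r$. Everything else is elementary arithmetic inside the finite cyclic monoid $\ph(\N)$; I expect the bookkeeping around this dichotomy to be the only genuine obstacle.
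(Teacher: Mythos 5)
Your proposal is correct and runs on the same engine as the paper's proof: build the chain $y_{i+1}x = y_i$ via quasi-invertibility, use the collision $x^{n}=x^{n+b}$ to derive $y = yx^{b}$, and then invoke quasi-absorbency to land $y$ in $\ph(\N)$. The only differences are cosmetic: the paper argues by contradiction starting from $y\notin\Image{\ph}$ (so the chain automatically never hits $1_{M}$) and finishes with the one-line arithmetic $y = yx^{cb} = (yx^{p})x^{cb-p} = x^{q+cb-p}$, whereas you argue directly for arbitrary $y$ with a harmless extra case and take a short detour through the idempotent $e$ and the group structure of $L$.
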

\begin{proof}
    
    Suppose that an epimorphism $\ph$ is neither injective nor surjective. We deduce a contradiction. As before, $\ph(1)$ is denoted by $x$.

    Because $\ph$ is not surjective, we can take $y\in M \setminus \Image{\ph}$. Since $x$ is \qi, and $y$ cannot be written as $x^n (=\ph(n))$, we can take an infinite sequence $y_0, y_1, y_2, \dots \in M$ such that
    \begin{itemize}
        \item $y_0 =y$
        \item $y_{n+1} x = y_{n}$.
    \end{itemize}

    Since $\ph$ is not injective, we can take $a\geq 0$ and $b>0$ such that $x^a=x^{a+b}$. Then we have $y=y_a x^a =y_a x^{a+b} =y x^b$ and $y = y x^{nb}$ for any $n \in \N$. Since $x$ is \qa, there exist $n,m \in \N$ such that $y= y x^{nb} = x^{m}$. This contradicts the assumption that $y \notin \Image{\ph}$.
\end{proof}

With those preparations, we can now classify all epimorphisms from $\N$.

\begin{theorem}\label{TheoremMonoidEpimorphismsFromN}
    Every epimorphism $\ph\colon \N \to M$ is either $\id_{\N}$, of the form of Example \ref{ExampleCaseOneSurjection}, or of the form of Example \ref{ExampleCaseTwoIntegersWithATail}. 
\end{theorem}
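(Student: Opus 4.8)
The plan is to build on Lemma~\ref{LemmaInjOrSurj}, which already separates every epimorphism $\ph\colon\N\to M$ into the case where $\ph$ is surjective and the case where $\ph$ is injective, and to dispatch these in turn. If $\ph$ is surjective, then $M=\{1_M,x,x^2,\dots\}$ with $x=\ph(1)$, so $M$ is a monogenic commutative monoid and $\ph$ is the canonical surjection onto it. The elementary structure theory of monogenic monoids then applies verbatim: either the powers of $x$ are pairwise distinct, and $M\cong\N$ with $\ph\cong\id_{\N}$, or there are least integers $a\ge 0$, $b>0$ with $x^{a+b}=x^{a}$, and then $M\cong\Mr{a}{b}$ and $\ph$ is the morphism of Example~\ref{ExampleCaseOneSurjection}. (Surjective monoid homomorphisms are automatically epic, so there is nothing to check here beyond the classification.)

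The real work is the case where $\ph$ is injective but not surjective; the goal there is to identify $\ph$ with the morphism $\N\to\Mr{a}{0}$ of Example~\ref{ExampleCaseTwoIntegersWithATail}. First I would introduce a \emph{degree} map $d\colon M\to\Z$: since $x$ is quasi-absorbing, every $y\in M$ satisfies $yx^{n}=x^{m}$ for some $n,m\in\N$, and since $\ph$ is injective the powers $x^{i}$ are distinct, so $d(y):=m-n$ is well defined; it is a monoid homomorphism with $d(x^{k})=k$, hence $d\circ\ph$ is the inclusion $\N\hookrightarrow\Z$. The image of $d$ is a submonoid of $\Z$ containing $\N$, so it is $\N$ or $\Z$. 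It cannot be $\N$: pick $y\notin\ph(\N)$ (possible since $\ph$ is not surjective), and use quasi-invertibility repeatedly to build $y=y_{0},y_{1},\dots$ with $y_{k+1}x=y_{k}$; none of the $y_{k}$ lies in $\ph(\N)$ (else $y$ would), so $d(y_{k})=d(y)-k$ would eventually be negative. Hence $d$ is onto $\Z$, so there is $g\in M$ with $d(g)=-1$, and note $g\notin\ph(\N)$.

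Next I would study the ``core'' $C:=\bigcap_{n\ge 0}x^{n}M$. It is an ideal of $M$ (if $c\in x^{n}M$ for all $n$ then $yc\in x^{n}M$ for all $n$), and every $y\notin\ph(\N)$ lies in $C$, since the descending chain $y_{k}$ above gives $y=y_{k}x^{k}\in x^{k}M$; in particular $g\in C$, hence also $e:=gx\in C$. Since $d(e)=0$, quasi-absorption gives a least $h$ with $ex^{h}=x^{h}$, and then $x^{k}=ex^{k}\in C$ for all $k\ge h$; thus $a:=\min\{k:x^{k}\in C\}$ is finite, and $x^{k}\in C$ exactly when $k\ge a$. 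The crucial observation is that $ec=c$ for every $c\in C$ (write $c=x^{h}p$ and use $ex^{h}=x^{h}$); consequently multiplication by $g$ inverts multiplication by $x$ on $C$, so $-\cdot x$ restricts to a bijection of $C$, $e$ is the identity of the semigroup $C$, and, using quasi-absorption once more to pin down the fibre over $0$, $d|_{C}\colon C\to(\Z,+)$ is a monoid isomorphism. Putting the pieces together, $M$ is the disjoint union of the tail $\{x^{0},\dots,x^{a-1}\}$ (with $d(x^{j})=j$, and $M\setminus C$ is exactly this set because $M\setminus C\subseteq\ph(\N)$) and the core $C\cong\Z$; the multiplication is then completely determined by $d$ together with the partition into tail and core, and this is precisely the multiplication of $\Mr{a}{0}$. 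The resulting isomorphism $M\xrightarrow{\ \sim\ }\Mr{a}{0}$ carries $\ph$ to the morphism of Example~\ref{ExampleCaseTwoIntegersWithATail}.

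I expect the heart of the argument — and the main obstacle — to be the analysis of $C$: showing that $-\cdot x$ acts invertibly on $C$ and that $d|_{C}$ is an isomorphism. The temptation is to argue that $x$ is cancellative, but this is false even for $\Mr{a}{0}$ (a tail element and a core element can collide under $-\cdot x$), so the argument must route through the auxiliary element $e=gx$, where quasi-invertibility (to produce $g$ with $d(g)=-1$) and quasi-absorption (to produce $h$ with $ex^{h}=x^{h}$) are both used in an essential way. Once the structure of $C$ is pinned down, reconstructing $M\cong\Mr{a}{0}$ and checking compatibility with $\ph$ is routine bookkeeping; the same analysis shows $a=h$ is a genuine invariant, which is what ultimately produces the poset $\rNN$ appearing in Corollary~\ref{CorollaryClasificationOfEssentialQuotients}.
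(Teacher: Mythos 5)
Your argument is correct, and its skeleton coincides with the paper's: dispose of the surjective case via the classification of monogenic monoids, then in the injective case build the degree homomorphism $d$ to $\Z$ (the paper's $\rho$), prove it surjective using quasi-invertibility, and reconstruct $M$ as a $\Z$-shaped core with a finite tail, locating the tail length by an application of quasi-absorption.

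The packaging is genuinely a bit different and worth comparing. You name the ideal $C=\bigcap_{n\geq 0}x^{n}M$ and the idempotent $e=gx$, and prove directly that $(C,\cdot,e)$ is a group isomorphic to $\Z$ via $d|_{C}$. The paper never isolates these objects: its second defining condition on $w_{n}$ (``for every $i\in\N$ there exists $y$ with $yx^{i}=w_{n}$'') is exactly membership in your $C$, its $w_{0}$ is your $e$, and it proves injectivity of $\rho$ only on $M\setminus\ph(\N)$ (a smaller set than $C$), handling the remaining elements $x^{a},x^{a+1},\dots$ via the formula $w_{n}=w_{-1}x^{1+n}$. Both arguments invoke quasi-absorption at the same juncture, applied to $e=w_{0}$, to find the threshold $a$; you correctly observe $a=h$. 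Your version makes the group structure of the core manifest at the cost of establishing that $\cdot x$ acts bijectively on $C$, whereas the paper's is more hands-on, constructing the $w_{n}$ one by one. The one step in yours that merits an extra line is the injectivity of $d|_{C}$ over $0$: for $c\in C$ with $d(c)=0$, quasi-absorption together with degree-matching gives $cx^{n}=x^{n}$ for some $n$; choosing $n\geq a$ so that $x^{n}\in C$ and hence $x^{n}=ex^{n}$, and using injectivity of $(\cdot x)^{n}$ on $C$, forces $c=e$. With that supplied, the final reconstruction of $M\cong\Mr{a}{0}$ is indeed bookkeeping, as you say.
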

\begin{proof}
    If $\ph$ is surjective, then $\ph$ is of the form of Example \ref{ExampleCaseOneSurjection} or $\N$. 
    Suppose $\ph$ is not surjective.
    In this case, by Lemma \ref{LemmaInjOrSurj}, $\ph$ is injective.

    As before, $\ph(1)$ is denoted by $x$.
    Using the fact that $x$ is \qa, we define a function 
    \[\rho \colon M \to \Z\]
     as $\rho(y)=-a+b$, where $a,b\in \N$ satisfies $yx^a=x^b$ (See Figure \ref{FigureRho}). 
              \begin{figure}[ht]
        \begin{shaded}
        \centering
\begin{tikzpicture} [scale = \scvalue]
    \fill[black] (0,1.5) circle (0.06) node[above]{$x^0$};
    \fill[black] (1,1) circle (0.06) node[above]{$x^1$};
    \fill[black] (2,0.5) circle (0.06) node[above]{$x^2$};
    \fill[black] (3,0) circle (0.06) node[above]{$x^3$};
    \fill[black] (4,0) circle (0) node[above]{$x^4$};
    \fill[black] (-5,0) circle (0) node{$\dots$};
    \fill[black] (-4,0) circle (0.06) node[below]{$w_{-4}$};
    \fill[black] (-3,0) circle (0.06) node[below]{$w_{-3}$};
    \fill[black] (-2,0) circle (0.06) node[below]{$w_{-2}$};
    \fill[black] (-1,0) circle (0.06) node[below]{$w_{-1}$};
    \fill[black] (0,0) circle (0.06) node[below]{$w_0$};
    \fill[black] (1,0) circle (0.06) node[below]{$w_1$};
    \fill[black] (2,0) circle (0.06) node[below]{$w_2$};
    \fill[black] (3,0) circle (0.06) node[below]{$w_3$};
    \fill[black] (4,0) circle (0.06) node[below]{$w_4$};
    \fill[black] (5,0) circle (0) node{$\dots$};
    \draw[black, thick] (-4.5,0) -- (4.5,0);
    \draw[black, thick] (0,1.5) -- (3,0) -- (4,0);

    \draw[black, thick, ->>] (0,-0.5)--(0,-1.5);
    \fill[black] (0,-1) circle (0) node[right]{$\rho$};
    \fill[black] (-5,-2) circle (0) node{$\dots$};
    \fill[black] (-4,-2) circle (0.06) node[below]{${-4}$};
    \fill[black] (-3,-2) circle (0.06) node[below]{${-3}$};
    \fill[black] (-2,-2) circle (0.06) node[below]{${-2}$};
    \fill[black] (-1,-2) circle (0.06) node[below]{${-1}$};
    \fill[black] (0,-2) circle (0.06) node[below]{$0$};
    \fill[black] (1,-2) circle (0.06) node[below]{$1$};
    \fill[black] (2,-2) circle (0.06) node[below]{$2$};
    \fill[black] (3,-2) circle (0.06) node[below]{$3$};
    \fill[black] (4,-2) circle (0.06) node[below]{$4$};
    \fill[black] (5,-2) circle (0) node{$\dots$};
    \draw[black, thick] (-4.5,-2) -- (4.5,-2);
\end{tikzpicture}
        \caption{Picture of $M= \Mr{3}{0}$ and $\rho$}
        \label{FigureRho}
        \end{shaded}
    \end{figure}
     This function is well-defined. 
     In fact, if $yx^a = x^b$ and $yx^{a'}=x^{b'}$ and $a'=a+c,\ c\geq 0$, then we have $x^{b'}=yx^{a'} = yx^{a+c}=x^{b+c}$ and hence the injectivity of $\ph$ implies $b'=b+c$ and $-a' + b' = -a + b$. 
     Furthermore, $\rho$ is a monoid homomorphism. It is because if $yx^a =x^b$ and $y'x^{a'}=x^{b'}$, then we have $yy' x^{a+a'}= yx^a y'x^{a'}=x^b x^{b'}= x^{b+b'}$. Here, we used commutativity of $M$. 
     (In fact, $\rho \colon M \to \Z$ is the canonical monoid homomorphism from $M$ to its Grothendieck group $\Z$.)
     

    Notice that $\rho(x^a)=a$. In other words, the following diagram commutes
    \[
    \begin{tikzcd}
        \N\ar[r,"\ph",rightarrowtail]\ar[rd,"\iota"',rightarrowtail]&M\ar[d,"\rho"]\\
        &\Z,
    \end{tikzcd}
    \]where $\iota\colon \N \to \Z$ denotes the inclusion function.

    This homomorphism $\rho$ is surjective because $x$ is \qi, and $\ph$ is not surjective.

     Next, we prove that this monoid homomorphism $\rho$ is injective on $M\setminus \ph(\N)$. Take two elements $y_0, y_1 \in M\setminus \ph(\N)$ such that $\rho(y_0)=\rho(y_1)$. We prove $y_0=y_1$. We can take $a,b \in \N$ such that $y_0x^a =x^b$ and $y_1 x^a = x^b$. Since $x$ is \qi, we can take $z_0, z_1 \in M$ such that $y_0=z_0 x^b$ and $y_1 = z_1 x^b$. (Summarized in the following rough sketch.)
     \[
     \begin{tikzcd}[row sep = small]
         z_0\ar[r,"x^b"]\ar[rrd,"x^{a+b}"']&y_0\ar[rd,"x^a"]&\\
         &&x^b\\
         z_1\ar[r,"x^b"']\ar[rru,"x^{a+b}"]&y_1\ar[ru,"x^a"']&
     \end{tikzcd}
     \]
     Then we have $y_0 = z_0 x^b =z_0 y_1 x^a = z_0 z_1  x^{a+b}= z_1 y_0 x^a = z_1 x^b = y_1$.

     We define $w_n \in M$ for each $n \in \Z$, as the unique element of $M$ that satisfies
     

     \begin{enumerate}
         \item $\rho(w_n)=n$ and\label{ConditionRhoN} 
         \item for any $i \in \N$, there exists $y\in M$ such that $yx^{i}=w_n $. \label{ConditionNegStrong}
     \end{enumerate}
     For $n<0$, there exists only one element that satisfies condition \ref{ConditionRhoN}, and this uniqueness implies condition \ref{ConditionNegStrong}. 
     For $n\geq 0$, if there exists such $w_n$, by condition \ref{ConditionNegStrong}, $w_n$ should be $w_{-1} x^{1+n}$. 
     Conversely, if one defines $w_n$ to be $w_{-1} x^{1+n}$, this satisfies both conditions.
     The uniqueness implies that $w_n w_m =w_{n+m}$ and $w_n x^m =w_{n+m}$. Therefore, $\ph(\N)\cup \{w_n\mid n\in \Z\}$ defines a submonoid of $M$ and its multiplication is fully specified.

     Furthermore, since $x$ is \qi, by \dq{dividing} $y$ by $x$ repeatedly, we can prove
     $\ph(\N)\cup \{w_n\mid n\in \Z\} = M$.
     Since $x$ is \qa, we can take the smallest $a\in \N$ such that $w_a = x^a$. Then $M$ is isomorphic to $\Mr{a}{0}$.
\end{proof}


\section*{Acknowledgements}
The authors would like to express our deepest gratitude to the first author's supervisor, Ryu Hasegawa, for his consistent guidance and support. Special thanks are extended to Hisashi Aratake, Yuta Yamamoto, Haruya Minoura, Morgan Rogers, Ivan Toma\v{s}i\'{c}, Luka Ilic, 
Junnosuke Koizumi,
Taichi Yasuda,  
and Takumi Watanabe for their enlightening discussions.

We are also grateful for the discussions at the \dq{mspace topos} and would like to extend our gratitude to its organizers, Toshihiko Nakazawa and Fumiharu Kato for fostering such an encouraging environment.

We would like to thank Koshiro Ichikawa and Yuto Kawase for reading the preprint and pointing out typographical errors and ambiguous sentences. Their remarks enhanced the quality of our paper.

We would like to express deep gratitude to the anonymous reviewer who carefully read the manuscript and provided numerous valuable comments. This greatly assisted the inexperienced authors in enhancing the quality of the paper.

Finally, this research was supported by Forefront Physics and Mathematics Program to Drive Transformation (FoPM), World-leading Innovative Graduate Study (WINGS) Program, the University of Tokyo.

\paragraph{\textbf{Declaration of generative AI and AI-assisted technologies in the writing process}}
During the preparation of this work the authors used chatGPT, DeepL, and Grammarly in order to improve our English. After using this tool/service, the authors reviewed and edited the content as needed and take full responsibility for the content of the publication.
\printbibliography
\end{document}